      \newtheorem{assumption}{Assumption}
       \newtheorem{fact}{Fact}
\def\BState{\State\hskip-\ALG@thistlm}
\def\BState{\State\hskip-\ALG@thistlm}
\newcommand{\be}{\begin{equation}}
\newcommand{\ee}{\end{equation}}
\newcommand{\e}{\varepsilon}
\newcommand{\la}{\langle}
\newcommand{\ra}{\rangle}
\newcommand{\E}{\mathbb E}
\newcommand{\R}{\mathbb R}
\newcommand{\de}{\delta}
\newcommand{\LBM}{LB-SGD}
\newcommand{\Prob}{\mathbb P}
\newcommand{\I}{\mathcal{I}}
\crefname{assumption}{assumption}{assumptions}
\crefname{lemma}{Lemma}{Lemmas}
\begin{document}

\title{
Log Barriers for Safe Black-box Optimization with Application to Safe Reinforcement Learning
% Log Barriers for Smooth Safe Optimization with Application to Reinforcement Learning
}

\author{\name 
    Ilnura Usmanova 
      \email{ilnurau@control.ee.ethz.ch} \\
        \addr    
        {Automatic Control Laboratory, D-ITET,\\ ETH Zürich, 8092 Zurich, Switzerland}
    \AND 
    Yarden As 
      \email{yarden.as@inf.ethz.ch} \\
        \addr 
        Institute for Machine Learning,
        D-INFK,\\ ETH Zürich, 8092 Zurich, Switzerland 
    \AND Maryam Kamgarpour* \email{maryam.kamgarpour@epfl.ch}\\
        \addr
        STI-IGM-Sycamore, 
% ME C1 400, 
% Station 9, 
 EPFL, 1015 Lausanne, Switzerland
    \AND Andreas Krause* \email{ krausea@ethz.ch}\\
        \addr 
        Institute for Machine Learning,
        D-INFK,\\ ETH Zürich, 8092 Zurich, Switzerland 
}
\editor{}
\maketitle
\begin{abstract}%
    Optimizing noisy functions online, when evaluating the objective requires experiments on a deployed system, is a crucial task arising in manufacturing, robotics and many others. Often, constraints on safe inputs are unknown ahead of time, and we only obtain noisy information, indicating how close we are to violating the constraints. Yet, safety must be guaranteed at all times, not only for the final output of the algorithm. 
    
    We introduce a general approach for seeking a stationary point in high dimensional non-linear stochastic optimization problems in which maintaining \emph{safety} during learning is crucial.  
    Our approach called \LBM\,is based on applying stochastic gradient descent (SGD) with a carefully chosen adaptive step size to a logarithmic barrier approximation of the original problem. 
    We provide a complete convergence analysis of non-convex, convex, and strongly-convex smooth constrained problems, with first-order and zeroth-order feedback.
    % The step size is tighter than in the previous gradient based approach for optimizing log barriers \citep{usmanova2020safe, hinder2018one}, since it uses the smoothness constant instead of the Lipschitz continuity constant compared to them. 
    %We demonstrate that despite our approach is sensitive to the noise, it has cheap 
    Our approach yields efficient updates and scales better with dimensionality compared to existing %safe Bayesian optimization 
    approaches.
    
    We empirically compare the sample complexity and the computational cost of our method with existing safe learning approaches. 
    Beyond synthetic benchmarks, we demonstrate the effectiveness of our approach on minimizing constraint violation in 
    policy search tasks in safe reinforcement learning (RL). \footnote{* Equal supervision}
\end{abstract}
\begin{keywords} Stochastic optimization, safe learning, black-box optimization, smooth constrained optimization, reinforcement learning. 
\end{keywords}

\section{Introduction}
    Many optimization tasks in robotics, manufacturing, health sciences, and finance require minimizing a loss function under constraints and uncertainties. In several applications, these constraints are unknown at the outset of optimization, and one can infer the feasibility of inputs only from noisy measurements. 
    For example, in manufacturing, the learner may want to tune the parameters of a machine. However, they can only observe  noisy measurements of the constraints. 
    Alternatively, in learning-based control tasks, e.g., in robotics, one may want to iteratively collect measurements and improve a pre-trained control policy in new environments. 
    %to a complex stochastic model of the constraints in the white-box setting or purely the constraint measurements in the model-free black-box setting.
    %%%%%%%
    % Moreover
    In such cases, during the optimization process, it is crucial to only query points (decision vectors) that satisfy the \textcolor{black}{safety} constraints, i.e., lie inside the feasible set, since querying infeasible points could lead to harmful consequences \citep{kirschner2019adaptive,7487170}.     \textcolor{black}{In such settings, even if the state constraints are known in advance, the learner may only have an approximate model of the true 
    dynamics, e.g., through a simulator or a learned model. This implies that in the control policy space, exact constraints are also unknown, which makes non-violation of safety constraints while learning a challenging and important task. 
    }  In the manufacturing example, one wants to sequentially update the parameters and take measurements of the machine performance while not violating the temperature limit during the learning not to break the machine.    
    In the robotics example, while learning the new policy in the new environment, one wants to perform only safe policy updates, avoiding dangerous situations on the road. 
    This problem is known as \emph{safe learning}. 
    %in the machine learning community. 
    
    In this work, we consider two general settings of safe learning. In the first case, we can only access the objective and constraints from noisy value measurements. This is referred to the zeroth-order (black-box) noisy information setting. In the second case, noisy gradient measurements are also available. This is referred to as the first-order noisy information setting. 
    
    The literature on safe learning has focused mainly on the black-box setting. To compare various methods in this setting, we can estimate their sample and computational complexity. The sample complexity represents the number of oracle queries in total that the learner has to make during the optimization to achieve the specific final accuracy $\e>0$. Computational complexity is the total number of arithmetical operations the algorithm requires to achieve accuracy $\e$. 
    
    In the safe learning case,
    % Safe optimization in the zeroth- and first-order noisy information setting is challenging because it requires 
    % /
    % In this work, we propose a method for safe optimization in the zero and first-order noisy information setting.
    % There exists a large body of work aiming to solve this problem in the black-box information case
a large body of work is based on safe Bayesian optimization (BO) \citep{pmlr-v37-sui15,berkenkamp2020bayesian}. These approaches are typically based upon fitting a Gaussian process (GP) as a surrogate of the unknown objective and constraints functions based on the collected measurements. GPs are built using the predefined kernel function.

Although safe BO algorithms are provably safe and globally optimal, without simplifying assumptions,  BO methods suffer from \emph{the curse of dimensionality} \citep{https://doi.org/10.48550/arxiv.1807.02811,https://doi.org/10.48550/arxiv.1902.10675,https://doi.org/10.48550/arxiv.2103.00349}. 
That is, their sample complexity might depend exponentially on the dimensionality for most  commonly used kernels, including the squared exponential kernel \citep{srinivas12information}. 
	Moreover, their computational cost can also scale exponentially on the dimensionality since BO methods have to solve a non-linear programming (NLP) sub-problem at each iteration, which is, in general, an NP-hard problem. 
	Together with this challenge, their computational cost scales cubically with the number of measurements, which provides a significant additional restriction on the number of measurements.
	These challenges make BO methods harder to employ on medium-to-big scale problems. \footnote{Few works are extending BO approaches to high dimensions, see  \citet{https://doi.org/10.48550/arxiv.1502.05700,kirschner2019adaptive}.}
	
    Thus, our primary motivation is to find an algorithm such that
    {\em 1)} its computational complexity  scales  efficiently to a large number of data points;
    {\em 2)} its sample complexity scales efficiently to high dimensions; and
    {\em 3)} it keeps optimization iterates within the feasible set of parameters with high probability. 
    
    We propose {\em Log Barriers SGD
    % Stochastic Gradient Descent
    (\LBM)}, an algorithm that addresses the safe learning task by minimizing the log barrier approximation of the problem. This minimization is done by using Stochastic Gradient Descent (SGD) with a carefully chosen adaptive step size. We prove safety and derive the convergence rate of the algorithm for the convex and non-convex case (to the stationary point) and demonstrate \LBM's performance compared to other safe BO optimization algorithms on a series of experiments with various scales. 
     
    \paragraph{Our contributions}
	We summarize our contributions  below:
    \begin{itemize}
        \item We propose a unified approach for safe learning given a zeroth-order or first-order stochastic oracle. 
         We prove that our approach generates feasible iterations with high probability and converges to a stationary point. 
         Each iteration of the proposed method is computationally cheap and does not require solving any subproblems.
        %  such as those required for Frank-Wolfe (LP subproblems) or BO-based algorithms (NLP subproblems).
        \item In contrast to our past work on the log barrier approach \citep{usmanova2020safe}, we develop a less conservative adaptive step size based  on the smoothness constant instead of the Lipschitz constant of the constraints, enabling a tighter analysis.
        
        \item We provide a unified analysis, deriving the convergence rate of our algorithm for the stochastic non-convex, convex, and strongly-convex problems. We establish convergence despite the non-smoothness of the log barrier and the increasingly high variance of the log barrier gradient estimator.
        \item We empirically demonstrate that our method can scale to problems with high dimensions, in which previous methods fail. 
        Moreover, we show the effectiveness of our approach in minimizing constraint violation in policy search in a high-dimensional constrained reinforcement learning (RL) problem.
    \end{itemize}

    \paragraph{Related work} 
    
     Although  first-order stochastic optimization is widely explored \citep{Nemirovsky_Yudin, juditsky:hal-00853911,
    %  Nemirovski_Juditsky_Lan_Shapiro, DVURECHENSKY2021100015,
     Lan_book},  we are not aware of any work addressing \emph{safe} learning for first-order stochastic optimization. 
    Therefore, even though our work also covers the first-order information case, we focus our review on the zeroth-order (black-box) optimization. Most relevant to our work are two areas: 1) \emph{feasible} optimization approaches addressing smooth problems with {\em known} constraints; 2) existing \emph{safe} approaches addressing smooth 
    % {\em non-convex} 
    \emph{unknown} objectives and constraints (including  linear constraints). Here, by \emph{unknown} constraints, we mean that we only have access to a noisy zeroth-order oracle of the constraints while solving the constrained optimization problem. By feasible optimization approaches, we refer to constrained optimization methods that generate a feasible optimization trajectory. For example, the Projected Gradient Descent and Frank-Wolfe are feasible, whereas dual approaches such as Augmented Lagrangian are infeasible. 

    \textcolor{black}{There also exists another large body of work addressing \emph{probabilistic} or \emph{chance constraints} \citep{ShapDentRusz09}. This line of work aims to solve an optimization problem with probabilistic constraints in the form $\Prob\{F^i(x,\xi)\leq 0\}\geq 1-\de.$ However, the main difference is that in our safe learning task, we aim to satisfy the uncertain constraints with high probability \emph{during the learning process}, not only in the end. Another issue is that the chance constraints problem, in general, is also a complex task with no universal solution. Typically, to address it, one requires either a significant number of (unsafe) measurements (e.g., in the scenario approach) or assumes some knowledge about the structure of the constraints a priori (e.g., in robust optimization). In contrast, in the current work, we propose a way to address the safe learning task without prior knowledge of the structure of the constraints. }
    % Moreover, we assume one-point feedback setting, when
	    
    We summarize the discussion of the algorithms from the past work as well as the best known lower bounds 
    in \Cref{tab:comparison}. All works in  \Cref{tab:comparison} consider one-point feedback, except for \citet{balasubramanian2018zeroth} who consider two-point feedback. \emph{Two-point} feedback allows access to the function measurements with the same noise disturbance in at least two different points, whereas \emph{one-point} feedback cannot guarantee this, and the noise can change at each single measurement. In our current work we also consider one-point feedback due to its generality.  
    % Afterwards,
    Next, we provide a detailed discussion of the past work.
\begin{table*}[h!]
\scriptsize
\centering
    \begin{tabularx}{\textwidth}{Xcccc}
        % \hline 
        %  \multicolumn{2}{|c||}{\bf Known constraints} & \multicolumn{2}{|c|}{\bf Unknown constraints}
        % \\
        \toprule
          \textbf{Algorithm} & \textbf{Sample complexity} & \textbf{Computational complexity}  & \textbf{Constraints} & \textbf{Convexity}
         \\
    %   \midrule
    %      \citet{flaxman2005online} 
    %      & $O\left(\frac{d^2}{\e^4}\right)$ & projections & known & yes \\
         \midrule
         \citet{bach2016highly} & $O\left(\frac{d^2}{\e^3}\right)$ & projections & known & yes\\
         \midrule
         \citet{bach2016highly} & $O\left(\frac{d^2}{\mu \e^2}\right)$ & projections & known & $\mu$-strongly-convex\\
         \midrule
         \citet{bubeck2017kernel}  
         & $O\left(\frac{d^{3}}{\e^2}\right)$ & samplings from $p_t$ distribution & known & yes \\
        %  \hline \citet{belloni2015escaping} & $O(\frac{d^{6.5}}{\e^2})$ & ? & no & yes & 1-point\\
%          \hline
%          \citet{balasubramanian2018zeroth} & \vtop{
%          \hbox{\strut $O\left(\frac{d}{\e^4}\right)$}
% 		\hbox{\strut \footnotesize 
% 		$O\left(\frac{d}{\e^{3.5}}
% 		\right) + \tilde O \left(\frac{d^4}{\e^{2.5}}
% 		\right) $ }
% 		} & \vtop{
% 		\hbox{\strut
% 		 {gradient steps}}
% 		\hbox{\strut
% 		 {Hessian estimation}}} & no & no & 2-point
% 		 \\
	     \midrule \citet{balasubramanian2018zeroth} &  $O\left(\frac{d}{\e^4%\sqrt[4]{T}
			}\right)$
		 & 
		 $O\left(\frac{1}{\e^2}\right)$ LPs & known & no
		 \\
		 \midrule
         \citet{garber2020improved}
         & $O\left(\frac{d^{4}}{\e^4}\right)$ & $O\left(\frac{1}{\e^2}\right)$ LPs & known & yes 
         \\
         \midrule
          \citet{usmanova2019safe}  
         &  $O\left(\frac{d^{3}}{\e^3}\right)$ & $O\left(\frac{1}{\e^2
			}\right)$ LPs & unknown, linear & yes 
         \\
         \midrule
          \citet{fereydounian2020safe}   
         & $\tilde O\left(\frac{d^{2}}{\e^4}\right)$ & $O\left(\frac{1}{\e^2
			}\right)$ LPs & unknown, linear & yes/no 
         \\
         \midrule
          {\citet{berkenkamp2017safe}}
         & $\tilde O\left(\frac{\gamma(d)}{\e^2}\right)$ & $O\left(\frac{\gamma(d)}{\e^2}\right)$ NLPs & unknown & no 
         \\ 
         \midrule
          {\bf This work }
         & $ O\left(\frac{d^{2}}{\e^7}\right)$ & $ O\left(\frac{1}{\e^3}\right)$ gradient steps & unknown & no 
         \\ 
          \midrule
          {\bf This work }
         & $\tilde O\left(\frac{d^{2}}{\e^6}\right)$ & $\tilde O\left(\frac{1}{\e^2}\right)$ gradient steps & unknown & yes 
         \\ 
         \midrule
          {\bf This work }
         & $\tilde O\left(\frac{d^{2}}{\mu\e^5}\right)$ & $\tilde O\left(\frac{1}{\e^2}\right)$ gradient steps & unknown & $\mu$-strongly-convex 
         \\ 
         \midrule
        {\em Lower bound} \citep{shamir2013complexity} 
      & $O\left(\frac{d^{2}}{\e^2}\right)$ & - &  known & yes 
        % & {\em Lower bound}
        % & unknown 
    %     \\
    %     \midrule
    %     {\em Lower bound} \citep{arjevani2019lower} 
    %   & $O\left(\frac{1}{\e^4}\right)$ & - &  no & no 
        % & {\em Lower bound}
        % & unknown 
        \\
        \bottomrule
    \end{tabularx}
    \caption{Zeroth-order safe smooth optimization algorithms. 
    % \citet{usmanova2019safe} and \citet{fereydounian2020safe} considered setup with smooth objective,  zeroth-order linear constraint oracle and stochastic first-order objective oracle.  
    Here $\e$ is the target accuracy, and $d$ is the dimension of the decision variable. 
    Many of the cited works provide the bounds in terms of regret, which can be converted to stochastic optimization accuracy. In  SafeOpt, $\gamma(d)$ depends on the kernel, and might be exponential in $d$. All the above works consider one-point feedback, except for \citet{balasubramanian2018zeroth} who consider two-point feedback. 
    % \emph{Two-point} feedback allows access to the function measurements with the same noise disturbance in at least two different points, whereas \emph{one-point} feedback cannot guarantee this, and the noise can change at each single measurement. 
    In \citet{bubeck2017kernel}, at each iteration the sampling from a a specifically updated distribution $p_t$ can be done in $\text{poly}(d,\log(T))T$-time. Under $\tilde O(\cdot)$, we hide a multiplicative logarithmic factor. }
    \label{tab:comparison}
\end{table*}
	
	\paragraph{Known constraints} 
	\looseness=-1 
	We start with smooth 
    zeroth-order optimization with {\em known} constraints. 
    
    For \emph{convex} problems with known constraints, several approaches address zeroth-order optimization with and without projections. \citet{flaxman2005online} propose an algorithm achieving a sample complexity of $ O(\frac{d^2}{\e^4})$ using projections, where $\e$ is the target accuracy, and $d$ is the dimensionality of the problem.  \citet{bach2016highly} achieve $O\left(\frac{d^2}{\e^3}\right)$ sample complexity for smooth convex problems, and $O\left(\frac{d^2}{\mu\e^2}\right)$ for smooth $\mu$-strongly-convex problems. 
    Since the projections might be computationally expensive, in the projection-free setting, \cite{chen2019projection} propose an algorithm achieving 
    % regret bound $O(dT^{4/5})$ corresponding to 
    a sample complexity of $O(\frac{d^5}{\e^5})$ for stochastic optimization. 
    \citet{garber2020improved} improve the 
    % regret
    bound for projection-free methods to 
    % $O(dT^{3/4}),$ i.e.,
    $ O(\frac{d^4}{\e^4})$ sample complexity. Instead of projections, both of the above works require solving linear programming (LP) sub-problems at each iteration. 
    \citet{bubeck2017kernel} propose a kernel-based method for adversarial learning achieving  $O(d^{9.5}T^{1/2})$ regret, 
    and conjecture that a modified version of their algorithm can achieve $O(\frac{d^3}{\e^2})$ sample complexity for stochastic black-box convex optimization. This method uses a specific annealing schedule for exponential weights, and is quite complex; at each iteration $t>0$ it requires sampling from a specific distribution $p_t$, which can be done in $\text{poly}(d,\log(T))T$-time. 
% 	For the smooth and convex case, \citet{belloni2015escaping} propose the method that achieves $O(\frac{d^{6.5}}{\e^2})$.
	For the smooth and strongly-convex case, \citet{hazan2016variance} propose a method that achieves $O(\frac{d^3}{\e^2}).$ The general lower bound for the convex black-box stochastic optimization $O\left(\frac{d^{2}}{\e^2}\right)$ is proposed by \citet{shamir2013complexity}. 
	 To the best of our knowledge, there is no proposed lower bound for the safe convex black-box optimization with unknown constraints.
% 	\textit{Lower bound in the unconstrained non-safe case.} 
% In a well-known model where algorithms access smooth,  non-convex functions through queries to an unbiased stochastic gradient oracle with bounded variance, 
% 	However, these methods do not guarantee safety under unknown constraints; they assume either known or no constraints. 

	For \emph{non-convex} optimization, \citet{balasubramanian2018zeroth} provide a comprehensive analysis of the performance of several zeroth-order algorithms allowing two-point zeroth-order feedback.\footnote{The difference between one-point and two-point feedback is that two-point feedback allows access to the function with the same noise in multiple points, which is a significantly stronger assumption than the one-point feedback \citep{Duchi2015}.} 
	
	There exist also other classical derivative-free optimization methods addressing non-convex optimization based on various heuristics. One example is the Nelder-Mead approach, also known as simplex downhill \citep{10.1093/comjnl/7.4.308}. To handle constraints it uses penalty functions \citep{luersen2004}, or barrier functions \citep{NM-barriers}. Another example are various evolutionary algorithms \citep{journals/jgo/StornP97, kennedy95particle, 10.1007/978-3-642-83814-9_6, journals/ec/HansenO01}.
	Nevertheless, all of these approaches are based on heuristics and thus do not provide theoretical convergence rate guarantees, at best establishing asymptotic convergence. 
% 	As for the lower bound for the non-convex unconstrained case,  \citet{arjevani2019lower} prove that in the worst case, any algorithm requires at least $\e^{-4}$ queries to find an $\e$ stationary point.  
% 	There are some works in zeroth-order 1-point feedback for convex optimization with known constraints, for example \citet{bach2016highly} require $O\left(\frac{d^2}{\eta^3}\right)$ measurements to achieve $\eta$ accuracy in the non-smooth case.
% 	However, the conditional gradient-based algorithm (Frank-Wolfe) of \citet{balasubramanian2018zeroth} for constrained non-convex problems requires global knowledge of the constraint functions since the linear objective must be optimized with respect to these constraints. 
   
    % Safe Bayesian Optimization
    \paragraph{Unknown constraints} 
    \looseness -1 
    There are much fewer works on safe learning for problems with a non-convex objective and \emph{unknown} constraints. A significant line of work covers objectives and constraints with bounded reproducing kernel Hilbert space (RKHS) norm \citep{sui2015safe, berkenkamp2016bayesian}, based on Bayesian Optimization (BO). 
    Also, for the \emph{linear} bandits problem, \citet{amani2019linear} design a Bayesian algorithm handling safety constraints. 
    These works build Bayesian models of the constraints and the objective using Gaussian processes \citep[GP]{10.5555/1162254} and crucially require a suitable GP prior. 
    In contrast, in our work, we {\em do not} use GP models and do not require a prior model for the functions.  
    Additionally, most of these approaches do not scale to high-dimensional problems.  \citet{kirschner2019adaptive} proposes an adaptation to higher dimensions using line search called LineBO, which demonstrates strong performance in safe and non-safe learning in practical applications.
    However, they derive the convergence rate  only for the unconstrained case, whereas for the constrained case, they only prove  safety without convergence. 
    We empirically compare our approach with their method in high dimensions  and demonstrate that our approach can solve problems where LineBO struggles.
    
    From the optimization side, in the case of \emph{unknown} constraints, projection-based optimization techniques or Frank-Wolfe-based ones cannot be directly applied. % without assuming a model of the constraint set. 
    %Indeed, 
    Such approaches require solving subproblems with respect to the constraint set, and thus the learner requires at least an approximate model of it. 
    One can build such a model in the special case of polytopic constraints. For example, \citet{usmanova2019safe} propose a safe algorithm for convex learning with smooth objective and \textit{linear} constraints based on the Frank-Wolfe algorithm. Building on the above, \citet{fereydounian2020safe} propose an algorithm for both convex and non-convex objective and \textit{linear} constraints. Both these methods consider first-order noisy objective oracle and zeroth-order noisy constraints oracle. 
    
	For the more general case of non-linear programming, there are recent safe optimization approaches based on the interior point method (IPM).  \citet{usmanova2020safe} propose using the log barrier gradient-based algorithm for the non-convex non-smooth problem with zeroth-order information. They show the sample complexity to be $\tilde O\left(\frac{d^{3}}{\e^9}\right)$. Here, we hide a multiplicative logarithmic factor under $\tilde O(\cdot)$. 
	The work above is built on the idea of \citet{hinder2019poly} who propose the analysis of the gradient-based approach to solving the log barrier optimization (in the deterministic case). \textcolor{black}{The first-order approach of \citet{hinder2019poly} has sample complexity of $O(\frac{1}{\e^3})$, compared to which \citet{usmanova2020safe} is much slower due to harder non-smoothness and zeroth-order conditions.}
% 	\citet{hinder2019poly} analyze the trust region method in application to the logarithmic barrier independently of the problem and additionally analyze the gradient descent based approach to solving the log barrier optimization (in the deterministic case).  
	In the current paper, we extend the above works to smooth non-convex ($ \tilde O(\frac{d^2}{\e^7})$), convex ($\tilde O(\frac{d^2}{\e^6})$) and strongly-convex ($\tilde O(\frac{d^2}{\e^5})$) problems for both first-order and zeroth-order stochastic information. 
% 	Note that the log barrier approach is a special case of the Interior Point Method (IPM). 
	Note that IPM is a feasible optimization approach by definition. 
    By using self-concordance properties of specifically chosen barriers and second-order information, IPM is highly efficient in solving LPs, QPs, and conic optimization problems.
	However, constructing barriers with self-concordance properties is not possible for unknown constraints. Therefore, we focus on  logarithmic barriers.
	
	\paragraph{Price of safety} 
% 	Perhaps surprisingly, the field of safe learning is still not fully explored. There is little prior work addressing safe learning  
%     % in the \emph{convex} or \emph{non-convex} setting 
%     with \emph{unknown} non-linear constraints in zeroth-order optimization,
%      i.e., safe Bayesian approaches \citep{sui2015safe, berkenkamp2016bayesian,kirschner2019adaptive} or safe approaches based on Frank-Wolfe \citep{usmanova2019safe, fereydounian2020safe}. We analyze an approach that allows to provably solve (at least locally) safe learning problems with non-linear constraints, even for high dimensional problems.
     To finalize  \Cref{tab:comparison}, compared to the state-of-the-art works with tractable algorithms and \emph{known} constraints \citep{bach2016highly}, we pay a price of an order $\tilde O(\e^{-3})$ in zeroth-order optimization just for the \emph{safety} with respect to unknown constraints both in convex and strongly-convex cases. In the non-convex  case, we pay $ O(d\e^{-3})$ both for safety and having one-point feedback compared to \citet{balasubramanian2018zeroth} considering $2$-point feedback. As for the computational complexity, our method is projection-free and does not require solving any subproblems compared to the above methods. 
    % It is summarized in Table \ref{tab:comparison}.

\paragraph{Paper organization} We organize our paper as follows. In Section \ref{section:problem}, we formalize the problem, and define the assumptions for the first-order stochastic setting. 
In Section \ref{section:general_approach}, we describe our main approach for solving this problem, describe our main theoretical results about it, and establish its safety.
In Section \ref{section:method_specifications}, we specialize our approach for the non-convex, convex and strongly-convex cases. 
For each of these cases, we also provide the suitable optimality criterion and the convergence rate analysis. 
Then, in Section \ref{section:zeroth-order} we specifically analyse the setting of the zeroth-order information, and derive the sample complexity of all variants of our method in this setting. 
In Section \ref{section:experiments} we compare our approach empirically with other existing safe learning approaches and additionally demonstrate its performance on a high-dimensional constrained reinforcement learning (RL) problem.

\section{Problem Statement}\label{section:problem}
    We consider a general constrained optimization problem:
    \begin{align} \label{problem}
        &\min f^0(x) \tag{P}\\
        &\text{s.t. } f^i(x)\leq 0, i=1,\ldots,m, \nonumber
    \end{align} 
    \looseness -1 where the objective function $f^0: \R^d \rightarrow \R$ and the constraints $f^i: \R^d \rightarrow \R$ are  \emph{unknown}, possibly non-convex functions.

    We denote by $\mathcal X$ the feasible set $\mathcal X := \{x\in\R^d: f^i(x)\leq  0, i \in[m]\},$ where $[m]:=\{1,\ldots,m\}.$ By $Int(\mathcal X)$ we denote the interior of the set $\mathcal X$. 
    By $\|\cdot\|$ we denote the Euclidean $\ell_2$-norm. A function $f: \R^d\rightarrow \R$ is called \textit{$L$-Lipschitz continuous} on $\mathcal X$ if
    $  |f(x) - f(y)|\leq L\|x - y\| ~ \forall x,y\in\mathcal X. $
    It is called \textit{$M$-smooth} on $\mathcal X$ if 
    $  f(x) \leq f(y) + \la \nabla f(y), x-y\ra + \frac{M}{2}\|x-y\|^2 ~ \forall x,y\in\mathcal X. $    
    It is called \textit{$\mu$-strongly convex} on $\mathcal X$ if  
    $ f(x) \geq f(y) + \la \nabla f(y), x-y\ra + \frac{\mu}{2}\|x-y\|^2 ~ \forall x,y\in\mathcal X. $ 
    By $\mathcal L(x,\lambda) := f^0(x) + \sum_{i=1}^m\lambda^i f^i(x)$ we denote the Lagrangian function of a problem $\min_{x\in\R^d} f^0(x)\text{ s.t. } f^i(x)\leq 0~ \forall i \in [m]$, where $\lambda\in\R^m$ is the dual vector.  
    
    Our goal is to solve the \emph{safe learning} problem. That is, we need to find the solution to the constrained problem (\ref{problem})  while keeping all the iterates $x_t$ of the optimization procedure feasible $x_t \in \mathcal X$ with high probability during the learning process.
% \subsection{Assumptions}
    Throughout this paper we make the following assumptions: 
\begin{assumption}\label{assumption:2_diameter}
    Let $\mathcal X$ have a bounded diameter, that is, $\exists R > 0$ 
    such that for any $x,y\in \mathcal X$  we have $\|x-y\|\leq R.$
\end{assumption}
\begin{assumption}\label{assumption:1} 
The objective and the constraint functions $f^i(x) $ for $  i \in\{0,\ldots,m\}$ are  $M_i$-smooth and $L_i$-Lipschitz continuous on $\mathcal X$ with constants $L_i,M_i>0$. We denote by $L := \max_{i\in\{0,\ldots,m\} } \{L_i\}$ and  $M := \max_{i\in\{0,\ldots,m\}} \{M_i\}$.
\end{assumption}
The above two assumptions are standard in the optimization literature. Without any assumptions on the constraints, guaranteeing safety is impossible. 
%the last assumption, in general, guaranteeing safety is impossible. 
We assume the upper bounds on the Lipschitz and smoothness constants to be known.
\begin{assumption}\label{assumption:3}
    There exists a known starting point $x_0\in \mathcal X$ at which $\max_{i\in[m]} f^i(x_0)\leq -\beta$, for  $\beta>0$.
\end{assumption}
 The third assumption ensures that we have a safe starting point, away from the boundary.  In the absence of such an assumption, even the first iterate might be unsafe.  
\begin{assumption}\label{assumption:mfcq}
    % For some $\rho \in (0,\frac{\beta}{2}]$ and for any point $x\in \mathcal X$ there exists a direction $s_x\in\R^d: \|s_x\| = 1$, such that $\la s_x, \nabla f^i(x)\ra > l$ with $l>0$, for all $i\in\mathcal I_{\rho}(x)$ which are $\rho$-approximately active at $x$, i.e., $\mathcal I_{\rho}(x) := \{ i\in [m]| -f^i(x)\leq \rho\}.$
   \textcolor{black}{Let $\mathcal I_{\rho}(x) := \{ i\in [m]| -f^i(x)\leq \rho\}$ be the set of $\rho$-approximately active constraints at $x$ with $\rho>0$. For some $\rho \in (0,\frac{\beta}{2}]$ and for any point $x\in \mathcal X$ there exists a direction $s_x\in\R^d: \|s_x\| = 1$, such that $\la s_x, \nabla f^i(x)\ra > l$ with $l>0$, for all $i\in\mathcal I_{\rho}(x)$.}
\end{assumption}
The last assumption is the extended Mangasarian-Fromovitz constraint qualification (MFCQ). The classic MFCQ \citep{MANGASARIAN196737} is the regularity assumption on the constraints, guaranteeing that they have a uniform descent direction for all constraints at a local optimum. Our extended MFCQ guarantees this regularity condition at all points $\rho$-close to the boundary. For the classic MFCQ and further details on our extension to it, please refer to Appendix \ref{A:MFCQ}. This assumption holds for example for convex problems with the constraint set having a non-empty interior, as shown in Section \ref{sec:convex}.
% EXAMPLES. MOTIVATE.

\subsection{Oracle}
\label{ssec:oracle}
Typically in the applications we consider, the information available to the lear                                                      ner is noisy.
% We consider the case in which the information available to the learner is noisy,
For example, one can only observe perturbed gradients and values of $f^i,\forall i = 0,\ldots,m$ at the requested points $x_t$. Therefore, formally we consider access to the first-order stochastic oracle for every $f^i(x)$, providing the pair of value and gradient stochastic measurements:
\begin{align}
    \mathcal O (f^i, x, \xi) = (F^i(x,\xi), G^i(x,\xi)).
\end{align}
Note that the formulation allows (but does not require) that $F^i(x,\xi)$ and $ G^i(x,\xi)$ are correlated. In particular, this formulation allows to define the vector of $\xi = \{(\xi^i_0, \xi^i_1)\}_{i=0,\ldots,m} $ such that each $F^i(x,\xi) =  F^i(x,\xi^i_0)$ and $G^i(x,\xi) =  \textcolor{black}{G}^i(x,\xi^i_1)$. In this formulation, $\{(\xi^i_0, \xi^i_1)\}_{i=0,\ldots,m}$ can be either correlated or independent of each-other. 
The parts of the oracle are given as follows:
\begin{itemize}
    \item[1)] 
    \textbf{Stochastic value $F^i(x,\xi)$.} 
    We assume $F^i(x,\xi)$ is unbiased $$\E [F^i(x,\xi)] = f^i(x),$$  and sub-Gaussian with variance bounded by $\sigma_i^2$, 
    % such that $\E[ |F^i(x,\xi) - f^i(x)|^2] \leq \hat \sigma_i^2$,
    % bounded by $\sigma_i^2$ ,
    that is,  
    $$\Prob\left\{ |F^i(x,\xi) - f^i(x)|\leq \sigma_i\sqrt{\ln\frac{1}{\de}}\right\} \geq 1-\de,~i\in \{0,\ldots,m\}.$$ 
    \item[2)] 
    \textbf{Stochastic gradient $G^i(x, \xi)$.}  We assume that its bias is bounded by
    % $G^i(x,\xi)$ has bias $\hat b^i(x)$ and variance $\hat\sigma^2$. 
    % Its mean
    $$\|\E G^i(x,\xi) - \nabla f^i(x)\| \leq \hat b_i,$$ where $\hat b_i \geq 0$, and it is sub-Gaussian with the variance such that $\E[ \|G^i(x,\xi) - \E G^i(x,\xi)\|^2] \leq \hat \sigma_i^2$. 
    % Moreover, we assume that $\E[\|G^i(x,\xi)\|^2]\leq L_i^2,$ for all $i\in \{0,\ldots,m\}.$ 
\end{itemize}
    Note that the variances of $F^i(x,\xi)$ and $G^i(x,\xi)$ are fixed and given by the nature of the problem. However, we can decrease these variances by taking several measurements per iteration and replacing $(F^i(x,\xi), G^i(x,\xi))$ with \begin{align} \label{oracles:avg}
    F^i_n(x,\xi):= \frac{\sum_{j=1}^n F^i(x,\xi_j)}{n} \text{ and } G^i_n(x,\xi):= \frac{\sum_{j=1}^n G^i(x,\xi_j)}{n}.
    \end{align}
    In the above, we abuse the notation and replace the dependence $F^i_n(x,\xi_1,\ldots,\xi_n)$ by $F^i_n(x,\xi)$ for simplicity. 
    Then, their variances become respectively such that 
    \begin{align} \label{eq:sigma_n}
    \E[ \|F^i_n(x,\xi) - \E F^i_n(x,\xi)\|^2]
    & \leq \sigma_i^2(n) :=  \frac{\sigma_i^2}{n},\\
    \label{eq:hat_sigma_n}\E[ \|G^i_n(x,\xi) - \E G^i_n(x,\xi)\|^2] 
    & \leq \hat \sigma_i^2(n) : = \frac{\hat \sigma_i^2}{n}.
    \end{align}
    
    Our goal is given the provided first-order stochastic information, to find an approximate solution of problem (\ref{problem}) while not making value and gradient queries outside the feasibility set $\mathcal X$ with high probability. To do so, we introduce the log barrier optimization approach.
\section{General Approach}\label{section:general_approach}
In this section we describe our main approach and provide the main theoretical results of our paper.
\subsection{Safe learning with log barriers}
The main idea of the approach is to replace the original constrained problem (\ref{problem}) by its unconstrained \textit{log barrier} surrogate $\min_{x\in \R^d} B_{\eta}(x)$, where  $B_{\eta}(x)$ and its gradient $\nabla B_{\eta}(x)$ are defined as follows
    \begin{align}\label{eq:LB}
        B_{\eta}(x) & = f^0(x) + \eta\sum_{i=1}^m-\log (-f^i(x)),
        \\
         \nabla B_{\eta}(x) & = \nabla f^0(x) + \eta\sum_{i=1}^m\frac{\nabla f^i(x)}{-f^i(x)}.
     \end{align}
     This surrogate $B_{\eta}(x)$  grows to infinity as the argument converges to the boundary of the set $\mathcal X$, and is defined only in the interior of the set $Int(\mathcal X).$ Therefore, under \Cref{assumption:1,assumption:2_diameter,assumption:3,assumption:mfcq}, a major advantage of this method for the problems we consider is that by carefully choosing the optimization step-size, \emph{the feasibility of all iterates is maintained automatically}.
    We run Stochastic Gradient Descent (SGD) with the specifically chosen step size applied to the log barrier surrogate $\min_{x\in \R^d} B_{\eta}(x)$. 

    The main intuition is that the descent direction of the log barrier pushes the iterates away from the boundary, at the same time converging to an approximate KKT point for the non-convex case, and to an approximate minimizer for the convex case. 
    % We use 
     To measure the approximation in the non-convex case, we define the \emph{$\e$-approximate KKT point} \textbf{($\e$-KKT)}. Specifically,  for  $\e>0$ and a pair $(x,\lambda)$, such point satisfies the following conditions:
    \begin{align}
        &\lambda^i,-f^i(x) \geq 0,~ \forall i \in [m] \tag{$\e$-KKT.1}\label{KKT.1}
        \\
        &\lambda^i(-f^i(x)) \leq \e ,~ \forall i \in [m] \tag{$\e$-KKT.2}\label{KKT.2}
        \\
        &\|\nabla_x \mathcal L(x, \lambda)\| \leq \e. \tag{$\e$-KKT.3}\label{KKT.3}
    \end{align}
    Hereby, $\lambda$ is the vector of dual variables and  $\mathcal L(x,\lambda) := f^0(x) + \sum_{i=1}^m\lambda^i f^i(x)$ is the Lagrangian function of (\ref{problem}).
    Later, we show that SGD on the $\eta$-log barrier surrogate converges to an $\e$-approximate KKT point with $\e = \eta$. We show it by demonstrating that the small barrier gradient norm $\|\nabla B_{\eta}(\hat x)\| \leq \eta$ corresponds to \textcolor{black}{a small} gradient of the Lagrangian $\|\nabla_x \mathcal L(x, \lambda)\|$ with specifically chosen vector of dual variables $\lambda \in\R^m$  \citep{hinder2019poly, usmanova2020safe}. 
    In the convex case, 
    the approximate optimality in the value $ B_{\eta}(\hat x) - B_{\eta}(x_{\eta}^*) \leq \eta$ itself implies that $\hat x$ is an $\e$-approximate solution of the original problem: $f^0(\hat x) - \min_{x\in\mathcal X} f^0(x) \leq \e $ with $\e>0$ linearly dependent  on $\eta$ up to a logarithmic factor. 

\subsection{Main results}
% Convergence rates and safety}
    We propose to apply SGD with an adaptive step-size to minimize the unconstrained log barrier objective $B_{\eta}$. We name our approach \LBM.
    We show that \LBM\, (with confidence parameter $ \de = \hat \de/Tm$) achieves the following convergence results for the target probability $1-\hat\de$:
   \begin{enumerate}
   \item For the non-convex case,  
    after at most $T = O(\frac{1}{\e^3})$ iterations, and with  $\sigma_i(n) = O(\e^{2})$ and $\hat \sigma_i(n) = O(\e),$ \LBM ~ outputs  $x_{\hat t}$ which is an $\e$-KKT point with probability $1-\hat \de$.
    % :
    %  \begin{align}
    % \Prob\left\{\|\nabla B_{\eta}( x_{\hat t})\| \leq  \eta\right\} \geq 1-\de.
    % \end{align}
    % For the constant $\hat \sigma_i, \sigma_i > 0$,
    \textcolor{black}{In total,} we require $N = Tn = O(\frac{1}{\e^7})$ oracle queries $\mathcal O(f^i,x,\xi)$ for all $i\in\{0,\ldots,m\}$. (Theorem \ref{thm:non-convex})
       \item For the convex case, after at most $T = \tilde O(\frac{\|x_0 - x^*\|^2}{\e^2}) $ iterations of \LBM, and with  $\sigma_i(n) = \tilde O(\e^{2})$ and $\hat \sigma_i(n) = \tilde O(\e),$
     we obtain output $\bar x_T$ such that with probability $1-\hat\de$: $f^0(\bar x_T) - \min_{x\in\mathcal X}f^0(x) \leq \e $. 
    %  $$\Prob\left\{\left[B_{\eta}(\bar x_{T}) - B_{\eta}(x^*_{\eta})\right] \leq \eta \right\}\geq 1-\de $$ 
     % For the constant $\hat \sigma_i, \sigma_i > 0$,
     \textcolor{black}{In total,} we require $N = Tn = \tilde O(\frac{1}{\textcolor{black}{\varepsilon^6}})$ calls of the oracle $\mathcal O(f^i,x,\xi)$ for all $i\in\{0,\ldots,m\}$. (Theorem \ref{thm:convex})
\item 
    For the $\mu$-strongly-convex case,  
    after at most $T = \tilde O(\frac{1}{\mu\e} \log \frac{1}{\e})$ iterations of \LBM\, with decreasing $\eta$, and with $\sigma_i(n) = \tilde O(\eta^{2})$ and $\hat \sigma_i(n) = \tilde O(\eta),$ for the output $\hat x_{K}$ we have with probability $1-\hat \de$: $f^0(\hat x_K) - \min_{x\in\mathcal X}f^0(x) \leq \e $.
    % we obtain:
    %  $$\Prob\left\{\left[B_{\eta}(\hat x_{K}) - B_{\eta}(x^*_{\eta})\right] \leq \eta 
    %  \right\}\geq 1-\de 
    %  $$ 
     % For the constant $\hat \sigma_i, \sigma_i > 0$, 
     \textcolor{black}{In total,} we require $N = \tilde O\left(\frac{1}{\e^5}\right)$ calls of the oracle $\mathcal O(f^i,x,\xi)$ for all $i\in\{0,\ldots,m\}$. (Theorem \ref{thm:str-convex})
\item For the zeroth-order information case, 
    estimating the function gradients using finite difference, we obtain the following bounds on the number of measurements (Corollary \ref{corollary:zeroth-order}):
    \begin{itemize}
        \item $N = O(\frac{d^2}{\e^7})$ to get an $\e$-approximate KKT point in the non-convex case;
        \item $N = \tilde O(\frac{d^2}{\e^6})$ to get an $\e$-approximate minimizer in the convex case;
        \item $N = \tilde O(\frac{d^2}{\e^5})$ to get an $\e$-approximate minimizer in the strongly-convex case;
    \end{itemize}
    \item In all of the above cases the safety is guaranteed with probability $1-\de$ for all the measurements. (Theorem \ref{thm:safety}, Corollary \ref{corollary:zeroth-order})
   \end{enumerate} 
    In the above, $\tilde O(\cdot)$ denotes $O(\cdot)$ dependence up to a multiplicative logarithmic factor. Note that for zeroth-order information case we only pay the price of a multiplicative factor $d^2$.
% \clearpage    
\subsection{Our approach} 
    To minimize the log barrier function, we employ SGD using the stochastic first-order oracle providing 
    $(F^i(x,\xi), G^i(x, \xi))$ with an adaptive step size, and derive convergence rate of our methods dependent on the noise level of this oracle.
    At iteration $t$ we make the step in the form:
    \begin{align}\label{eq:procedure}
        x_{t+1} \leftarrow x_t - \gamma_t  g_t,
    \end{align}
    where $\gamma_t$ is a safe adaptive step size, $g_t$ being the log barrier gradient estimator. In \Cref{sec:lb_grad_estimator}, we show how to build the estimator $g_t$ of the log barrier gradient. Following that, in \Cref{sec:ada_gamma}, we explain how to choose $\gamma_t$. 
% Guaranteeing  small gradients using convex stochastic gradient methods is not straightforward problem: The classical SGD behaves much worse w.r.t.~obtaining $\eta$-small gradients, compared to achieving an $\eta$-small value accuracy \citep{allen2018make}.

% We propose to iteratively decrease the barrier parameter $\eta_k$ as in standard log barrier methods, together with decreasing smoothing parameter $\nu_k$. 

As mentioned before, the log barrier function is not a smooth function due to the fact that close to the boundaries of $\mathcal X$ it converges to infinity. 
% Even for the non-smooth strongly-convex stochastic optimization schemes, bounded sub-gradients are typically required in the literature to guarantee convergence.  
To address non-smooth stochastic problems, optimization schemes in the literature typically require bounded sub-gradients. 
For the log barrier function even this condition does not hold in general. Hence, we cannot expect the classical analysis with the standard predefined step size to hold when applying  SGD to the log barrier problem. 
Contrary to that, by making the step size \emph{adaptive}, we can guarantee \emph{local-smoothness} of the log barrier. Intuitively, this is done by restricting the growth of the constraints. We leverage this property in our analysis. 
In particular, let $\gamma_t$ be such that $f^i(x_{t+1}) \leq \frac{f^i(x_t)}{2}$ for every constraint. Then, the log barrier is locally-smooth at point $x_t$ with constant $M_2(x_t)$
\begin{align} \label{eq:M2}
    M_2(x_t) \leq M_0 + \textcolor{black}{10}\eta \sum_{i=1}^m \frac{ M_i}{\alpha^i_{t}} + \textcolor{black}{8} \eta\sum_{i=1}^m  \frac{(\theta^i_t)^2}{(\alpha^i_{t})^2},
\end{align} 
    where $\theta^i_t = \la \nabla f^i(x_t), \frac{g_t}{\| g_t\|}\ra$, and $\alpha_t^i = -f^i(x_t)$ for all $i\in[m]$. The growth on the constraints can be bounded by any constant in $(0,1)$ -- we pick $\frac{1}{2}$ for simplicity, similarly to \citet{hinder2018one}. In more details, this adaptivity property and the $M_2(x_t)$-local smoothness are analyzed in \Cref{sec:ada_gamma}. Importantly,  our local smoothness $M_2(x)$ bound is more accurate since it is constructed by exploiting the smoothness of the constraints and takes into account the gradient measurements. In contrast, the bound of \citet{hinder2019poly} relies on Lipschitz continuity without considering the gradient measurements.
    % dependent now not only on the values of the constraints on the boundary and Lipschitz constant, but also on the dot product of the constraints gradients and the direction $g_t$, that makes it more accurate, since their bound was obtained using the Lipschitz continuity whereas our bound was obtained using the smoothness.

\subsubsection{The log barrier gradient estimator}\label{sec:lb_grad_estimator}
The key ingredient of the log barrier method together with the safe step size is estimating the log barrier gradient. 
\paragraph{Estimating the gradient} 
     Recall that the log barrier gradient by definition is:
    $$ \nabla B_{\eta}(x_t) = \nabla f^0(x_t) + \eta\sum_{i=1}^m\frac{\nabla f^i(x_t)}{\alpha_t^i}.$$     
Since we only have the stochastic information, we estimate the log barrier gradient as follows:
\begin{algorithm}[H]
    \caption{Log Barrier Gradient estimator $\mathcal O_{\eta}(x_t,  n)$ }
    \label{alg:oracle}
	\small
	\begin{algorithmic}[1]
		\STATE {%\bfseries 
		\emph{Input:}} 
		Oracles $F^i(\cdot,\xi),G^i(\cdot,\xi),~\forall i \in \{0,\ldots,m\}$, $x_t \in \mathcal X$, \textcolor{black}{$\eta>0$}, number of measurements $n$ 
% 		\STATE Compute $ \bar\alpha_t^i 
% % 		\leftarrow 
% % 		[- F^i(x_t,\xi_t)]_a
% 		$ estimators of $\alpha_t^i$,
% 		for $i \in[m]$;
		\STATE  
	    $ g_t\leftarrow G^0_n(x_t,\xi_t) + \eta \sum_{i = 1}^{m} \frac{G^i_n(x_t,\xi_t)}{-F^i_n(x_t,\xi_t)};$ 
       \STATE {
      \emph{Output:} $g_t$}
    \end{algorithmic}
\end{algorithm}
In the above, we allow to take a batch of measurements per call and average them as defined in (\ref{oracles:avg}) in order to reduce the variances $ \sigma_i^2(n) :=  \frac{\sigma_i^2}{n},\hat \sigma_i^2(n) : = \frac{\hat \sigma_i^2}{n}.$

\paragraph{Properties of the estimator}
The log barrier gradient estimator defined above is biased and can be heavy tailed, since a part of it is a ratio of two sub-Gaussian random variables. Therefore, in the following lemma we provide a general  upper confidence bound on the deviation.
    % In the next lemma we show an important property of the above log barrier gradient estimator. 
    % By filtration $\mathcal F_t$ we denote $\sigma$-algebra corresponding to the past history of realizations $F^i(x_\tau, \xi_\tau), G^i(x_\tau, \xi_\tau)$ for $\tau \in [t]$. 
    We denote $\bar \alpha_t^i := - F^i_n(x_t, \xi_t).$
\begin{lemma}\label{lemma:LBGradEstimatorVarBias}
    The deviation of the log barrier gradient estimator $\Delta_t := g_t - \nabla B_{\eta}(x_t)$  satisfies: 
    \begin{align}\label{eq:delta_bound}
        \Prob\left\{\|\Delta_t\| 
        \leq  \hat b_0 + \hat \sigma_0(n) \sqrt{\ln\frac{1}{\de}} 
        + \sum_{i=1}^m \frac{\eta}{\bar\alpha_t^i}
        \left(\hat b_i + \hat \sigma_i(n) \sqrt{\ln\frac{1}{\de}}\right) + 
        \sum_{i=1}^m L_i \frac{\eta \sigma_i(n) }{\alpha_t^i\bar\alpha_t^i} 
        % \left( 
        \sqrt{\ln\frac{1}{\de}}
        % \right)
        \right\}
        \geq 1-\de.
    \end{align}
\end{lemma}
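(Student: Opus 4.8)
The plan is to expand $\Delta_t$ linearly into a contribution from the objective gradient estimator plus one contribution per constraint, bound each piece separately by concentration, and reassemble via the triangle inequality. Writing $G^i_n$, $F^i_n$, $\nabla f^i$ for the quantities evaluated at $x_t$, the definitions of $g_t$ and of $\nabla B_{\eta}(x_t)$ give
\begin{align*}
\Delta_t = \left(G^0_n - \nabla f^0\right) + \eta\sum_{i=1}^m\left(\frac{G^i_n}{\bar\alpha_t^i} - \frac{\nabla f^i}{\alpha_t^i}\right),
\end{align*}
so it suffices to bound the objective term and each of the $m$ bracketed ratio differences.

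For the objective term I would split $G^0_n - \nabla f^0 = (G^0_n - \E G^0_n) + (\E G^0_n - \nabla f^0)$: the bias part is at most $\hat b_0$ by the oracle assumption, and the centered part is at most $\hat\sigma_0(n)\sqrt{\ln\frac{1}{\de}}$ with probability $1-\de$ by the sub-Gaussian tail bound on $G^0_n$, producing the first two summands. The key step is the per-constraint ratio difference. Because the estimator is a ratio of sub-Gaussian quantities and hence potentially heavy-tailed, I avoid bounding the ratio directly and instead insert the \emph{measured} denominator $\bar\alpha_t^i$:
\begin{align*}
\frac{G^i_n}{\bar\alpha_t^i} - \frac{\nabla f^i}{\alpha_t^i} = \frac{G^i_n - \nabla f^i}{\bar\alpha_t^i} + \nabla f^i\cdot\frac{\alpha_t^i - \bar\alpha_t^i}{\alpha_t^i\,\bar\alpha_t^i}.
\end{align*}
The first piece is controlled exactly as the objective term, scaled by $1/\bar\alpha_t^i$, giving $\frac{1}{\bar\alpha_t^i}\bigl(\hat b_i + \hat\sigma_i(n)\sqrt{\ln\frac{1}{\de}}\bigr)$. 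For the second piece I would use $\|\nabla f^i\|\leq L_i$ from \Cref{assumption:1} together with the identity $\alpha_t^i - \bar\alpha_t^i = F^i_n - f^i$, whose magnitude is at most $\sigma_i(n)\sqrt{\ln\frac{1}{\de}}$ with probability $1-\de$ since $F^i_n$ is unbiased and sub-Gaussian; this yields $L_i\frac{\sigma_i(n)}{\alpha_t^i\bar\alpha_t^i}\sqrt{\ln\frac{1}{\de}}$. Multiplying through by $\eta$ and summing over $i$ reproduces the remaining summands exactly.

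The step requiring the most care is the probabilistic bookkeeping rather than the algebra: the bound is the intersection of $2m+1$ concentration events (one gradient tail for the objective, and a gradient tail plus a value tail per constraint). A naive union bound degrades the confidence to $1-(2m+1)\de$ and turns each $\ln\frac1\de$ into $\ln\frac{2m+1}{\de}$, whereas the statement keeps $\de$ and $\ln\frac1\de$; I would therefore read $\de$ as the per-event confidence and defer the aggregation to the global choice $\de = \hat\de/(Tm)$ used for the main theorems, or alternatively restate the lemma with the union-bound factor made explicit. A secondary point worth flagging is that the resulting bound legitimately mixes the random observed denominator $\bar\alpha_t^i$ with the deterministic $\alpha_t^i$ — this is precisely what the telescoping decomposition produces, and it is what makes the bound computable at run time.
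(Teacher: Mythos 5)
Your proof is correct, and it follows the same high-level strategy as the paper's: telescope the ratio difference, apply the triangle inequality, and control each term by a sub-Gaussian tail bound. The one genuine difference is which cross-term you insert. The paper writes each constraint term as $\eta G^i_n\bigl(\frac{1}{\bar\alpha_t^i}-\frac{1}{\alpha_t^i}\bigr)+\eta\frac{G^i_n-\nabla f^i}{\alpha_t^i}$, so the value-noise piece is multiplied by the \emph{measured} gradient $G^i_n$ (bounded by $L_i$ only with high probability, and strictly speaking only by $L_i+\hat b_i+\hat\sigma_i(n)\sqrt{\ln\frac{1}{\de}}$), and the gradient-deviation piece carries $1/\alpha_t^i$ — which the paper then silently swaps to $1/\bar\alpha_t^i$ when stating the final inequality. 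Your split, $\eta\frac{G^i_n-\nabla f^i}{\bar\alpha_t^i}+\eta\nabla f^i\frac{\alpha_t^i-\bar\alpha_t^i}{\alpha_t^i\bar\alpha_t^i}$, multiplies the value noise by the \emph{true} gradient, so $\|\nabla f^i\|\leq L_i$ holds deterministically from Lipschitz continuity, and it produces the stated bound term-for-term with no swapping. In this respect your route is slightly cleaner and more faithful to the statement than the paper's own argument. Your remark on the union bound is also apt: the paper has exactly the same $2m+1$-event bookkeeping issue and, like you, implicitly absorbs it into the global choice $\de=\hat\de/(Tm)$; flagging it explicitly is the more careful reading.
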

From the above bound, we can see that the closer we are to the boundary, the smaller  $\alpha_t^i$ becomes, and the smaller variance $\sigma_i$ we require to keep the same level of disturbance of the barrier gradient estimator. That is, the closer to the boundary, the more measurements we require to stay safe despite the disturbance, which is quite natural.
% \textcolor{black}{
% % Check and update. 
% Highlight the intuition more}
For the proof see \Cref{A:var}.

The above deviation consists of the variance part and the bias part. Note that the bias is non-zero even if the biases of the gradient estimators are zero. It can be bounded as follows (see Appendix \ref{appendix:bias}): 
\begin{align}
    \|\E\Delta_t\| \leq \sum_{i=1}^m  \frac{\eta L_i \sigma_i(n)}{(\alpha_t^i)^2}  + \hat b_0 + \sum_{i=1}^m \frac{\eta}{\alpha_t^i}\hat b_i.
\end{align}
In the above,
the expectation is taken 
% conditioned on the past history of measurements and 
given fixed $x_t$. 
% $\mathcal F_t$ denotes the filtration over all the past realizations of random variables by step $t$. 
This bias comes from the fact that we are estimating the ratio of two sub-Gaussian distributions, which is often heavy-tailed and even for Gaussian variables might behave very badly if the mean of the denominator is smaller than its variance \citep{RePEc:spr:stpapr:v:54:y:2013:i:2:p:309-323}. 
% Even in case of Gaussian variables, their ratio can be approximated by normal distribution if, say, \sigma_x/\mu_x < 0.3, \sigma_y/\mu_y < 0.2. In this case, we still will have a bias of the approximation... And the variance will be dependent on \mu_x^2/ \mu_y^2()?
This fact influences the SGD analysis, and does not allow getting convergence guarantees with larger noise. Therefore, our algorithm is very sensitive to the noise $\sigma_i(n)$ and might require many samples per iteration to reduce this noise.

\subsubsection{Adaptive step-size $\gamma_t$} \label{sec:ada_gamma}
% [Will need to restructure, to motivate the choice of the step-size better.]
First of all, recall that the log barrier is  \textcolor{black}{non-smooth}
% in general sense 
on $\mathcal X$, since it grows to infinity on the boundary. 
However, we can use the notion of the $M_2(x_t)$-local smoothness, that guarantees smoothness in a bounded region around the current point $x_t$ \textcolor{black}{such that: 
\begin{align}
\label{eq:constr-growth}\{x_{t+1}\in \mathcal X|f^i(x_{t+1})\leq\frac{f^i(x_t)}{2}, x_t\in \mathcal X\} \tag{$\star$}.
\end{align} In particular, assume that $\gamma_t$ is such that it ensures the above condition (\ref{eq:constr-growth}). Let $Y_t$ be a local set around $x_t$ defined as follows: $Y_t := \{y\in \R^d: y = x_t - u g_t, \forall u\in [0,\gamma_t]\}.$ Then, the function $B_{\eta}$ we call $M_2(x_t)$-\emph{locally smooth} around $x_t$ if for any $x,y\in Y_t$ we have
$$\|\nabla B_{\eta}(x) -  \nabla B_{\eta}(y)\|\leq M_2(x_t)\|x-y\|.$$}
% Let us define the hessian of the log-barrier $B_{\eta}(x)$ by $H_B(y)$ at the region $y\in U$ around $x_t$
% such that $y = x_t - \gamma_t g_t$ with $\gamma_t$ restricted as defined above. Then, for any such $y$ in the above region we have $$|g_t^T H_{B}(y)g_t| \leq M_2(x_t)\|g_t\|^2.$$ 
\textit{The local smoothness of the Log Barrier $B_{\eta}(x)$ is required for our convergence analysis  of the SGD}.
\paragraph{$M_2(x_t)$-local smoothness constant for the log barrier} 
    We derive our local smoothness constant based on the $M_i$-smoothness of the objective and constraints $f^i$ for $i = 0,\ldots,m$. 
    Compared to the Lipschitz constant-based approach (used in \citet{hinder2019poly, usmanova2020safe}), our way to bound the local smoothness constant $M_2(x_t)$ allows to estimate it more tightly since we use the quadratic upper bound instead of the linear bound. 
    % using the constraint gradients measurements. %The local smoothness is further used to prove the convergence of our algorithm.
\begin{lemma}\label{lemm:M2}
    On the bounded area around $x_t$ within the radius $\gamma_t$ such that the next iterate is restricted by $f^i(x_{t+1})\leq\frac{f^i(x_t)}{2},$  along the step direction $g_t$ 
    % $\alpha_{t+1}^i \geq \frac{\alpha_t^i}{2}$
    the log barrier $B_{\eta}(x_t)$ is locally-smooth with
    \textcolor{black}{
    \begin{align}\label{eq:M2_0}
    M_2(x_t):= M_0 + 2\eta \sum_{i=1}^m \frac{ M_i}{\alpha^i_{t}} + 4 \eta\sum_{i=1}^m  \frac{\la \nabla f^i(x_{t+1}), \frac{g_t}{\|g_t\|} \ra^2}{(\alpha^i_{t})^2}.
    \end{align}
    Moreover, if $\gamma_t\leq \frac{\alpha_t^i}{2|\theta_t^i| + \sqrt{\alpha_t^i M_i}}\frac{1}{\|g_t\|}$,                                                       then
     \begin{align}\label{eq:M2}
    M_2(x_t):= M_0 + 10\eta \sum_{i=1}^m \frac{ M_i}{\alpha^i_{t}} + 8 \eta\sum_{i=1}^m  \frac{(\theta^i_t)^2}{(\alpha^i_{t})^2},
    \end{align}
    where $\theta_t^i = \la \nabla f^i(x_t), \frac{g_t}{\|g_t\|} \ra$.}
\end{lemma}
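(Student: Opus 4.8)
The plan is to exploit that $B_\eta$ is twice continuously differentiable on $Int(\mathcal X)$ and to reduce local smoothness along the step to a uniform upper bound on the directional second derivative. Writing $d := g_t/\|g_t\|$ and $\alpha^i(x) := -f^i(x)$, differentiating the barrier gradient once more gives
$$\nabla^2 B_\eta(x) = \nabla^2 f^0(x) + \eta\sum_{i=1}^m\left(\frac{\nabla^2 f^i(x)}{\alpha^i(x)} + \frac{\nabla f^i(x)\nabla f^i(x)^\top}{\alpha^i(x)^2}\right).$$
Since $Y_t$ is a segment in direction $d$, the quadratic upper bound $B_\eta(y)\leq B_\eta(x) + \langle\nabla B_\eta(x), y-x\rangle + \frac{M_2(x_t)}{2}\|y-x\|^2$ on $Y_t$ — equivalently the claimed gradient-Lipschitz property — follows once I bound $\sup_{x\in Y_t}\langle d, \nabla^2 B_\eta(x)d\rangle$ by $M_2(x_t)$. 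Thus the whole lemma reduces to controlling the three directional terms $\langle d,\nabla^2 f^0 d\rangle$, $\langle d,\nabla^2 f^i d\rangle/\alpha^i(x)$, and $\langle\nabla f^i(x),d\rangle^2/\alpha^i(x)^2$ uniformly over $x\in Y_t$.

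The first step is to show the step-size restriction keeps every $\alpha^i$ bounded away from $0$ on the whole segment. Parametrizing $x(u)=x_t-ug_t$ and applying the $M_i$-smooth quadratic upper bound to $f^i$, I obtain
$$\alpha^i(x(u))\geq \alpha^i_t - u|\theta^i_t|\,\|g_t\| - \tfrac{M_i}{2}u^2\|g_t\|^2,$$
whose right-hand side is a concave parabola with negative linear coefficient, hence monotone decreasing and minimized over $[0,\gamma_t]$ at the endpoint. A direct computation shows the cap $\gamma_t\leq \alpha^i_t/\big((2|\theta^i_t|+\sqrt{\alpha^i_t M_i})\|g_t\|\big)$ forces this endpoint value to stay $\geq \alpha^i_t/2$, so $\alpha^i(x)\geq \alpha^i_t/2$ for all $x\in Y_t$. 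This realizes the constraint-growth condition (\ref{eq:constr-growth}) and is precisely what makes the non-smooth barrier locally smooth. Feeding $\alpha^i(x)\geq\alpha^i_t/2$ into the Hessian, together with $\langle d,\nabla^2 f^i d\rangle\leq M_i$ from $M_i$-smoothness, produces the factors $2$ and $4$ in front of $\eta\sum M_i/\alpha^i_t$ and $\eta\sum\langle\nabla f^i(x_{t+1}),d\rangle^2/(\alpha^i_t)^2$, recording the outer-product term at the endpoint $x_{t+1}$; this yields the first bound (\ref{eq:M2_0}).

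To pass from (\ref{eq:M2_0}) to the fully explicit (\ref{eq:M2}) I replace the endpoint gradient by $\theta^i_t = \langle\nabla f^i(x_t),d\rangle$. Since $u\mapsto\langle\nabla f^i(x(u)),d\rangle$ is $M_i\|g_t\|$-Lipschitz (again by smoothness of $f^i$), the step-size cap gives $|\langle\nabla f^i(x_{t+1}),d\rangle-\theta^i_t|\leq \gamma_t\|g_t\|M_i\leq\sqrt{M_i\alpha^i_t}$, whence $\langle\nabla f^i(x_{t+1}),d\rangle^2\leq 2(\theta^i_t)^2+2M_i\alpha^i_t$. Substituting this into the last sum and collecting the resulting $\eta M_i/\alpha^i_t$ contribution with the one already present turns the coefficients $(2,4)$ into $(10,8)$, which is exactly (\ref{eq:M2}).

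The main obstacle I anticipate is the first step: proving the uniform lower bound $\alpha^i(x)\geq\alpha^i_t/2$ over the entire segment $Y_t$, not merely at its endpoint, since this is what actually licenses treating the non-smooth barrier as locally smooth; it rests on the monotonicity of the concave parabola on $[0,\gamma_t]$ and on verifying the algebra that the specific $\gamma_t$ cap suffices. The remaining difficulty is careful bookkeeping — relating the intermediate-segment gradient to its endpoint value via the Lipschitz estimate, and collecting the extra $\eta M_i/\alpha^i_t$ term so the constants become $(10,8)$. Notably, no two-sided Hessian bound is required: since I only need an upper bound on $\langle d,\nabla^2 B_\eta d\rangle$, the one-sided quadratic upper bound defining $M_i$-smoothness in \Cref{assumption:1} directly yields $\langle d,\nabla^2 f^i d\rangle\leq M_i$, while the rank-one term is automatically nonnegative.
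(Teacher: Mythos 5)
Your proof is correct and follows essentially the same route as the paper's: bound the directional Hessian of $B_\eta$ along the segment $Y_t$ using $\alpha^i(y)\geq \alpha^i_t/2$, then replace $\langle \nabla f^i(y), g_t/\|g_t\|\rangle$ by $\theta^i_t$ via $M_i$-smoothness and the step-size cap (which gives $\gamma_t\|g_t\|M_i\leq\sqrt{M_i\alpha^i_t}$), with $(a+b)^2\leq 2a^2+2b^2$ converting the constants $(2,4)$ into $(10,8)$. The only substantive difference is that you explicitly verify, via the decreasing concave-parabola argument, that the cap keeps $\alpha^i\geq\alpha^i_t/2$ over the \emph{entire} segment rather than only at the endpoint, a point the paper's proof asserts by deferring to the adaptivity computation of Lemma~\ref{lemma:adaptive_gamma}; your version makes this small step self-contained.
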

    For the proof of Lemma \ref{lemm:M2} see \Cref{P2}. \textcolor{black}{As we can see from the next paragraph, the condition on $\gamma_t$ defined in the second part of the lemma is sufficient to ensure a bounded constraints growth (\ref{eq:constr-growth}).}
    In the case with inexact measurements, we have to use lower bounds on $\alpha_t^i$ and upper bounds on $\theta_t^i$. We denote by $\underline{\alpha}_t^i:= \bar{\alpha}_t^i - \sigma_i(n)\sqrt{\ln\frac{1}{\de}} $ a lower bound on $\alpha^i_t:$ $\Prob\{\alpha^i_t\geq \underline{\alpha}^i_t\}\geq 1-\de.$  
    We denote an upper bound on $\theta_t^i$ by $\hat \theta_t^i := |\la G^i_n(x, \xi), \frac{ g_t}{\| g_t\|}\ra| + \hat b_i +\hat \sigma_i(n) \sqrt{\log\frac{1}{\de}}
		    , ~ \forall i\in[m] $ such that $\Prob\{\theta^i_t\leq \hat{\theta}^i_t\}\geq 1-\de.$ 
    Then, an upper bound on $M_2(x_t)$ can be computed as follows
\begin{align}\label{eq:M2_estimate}
    \hat M_2(x_t) = M_0 + \textcolor{black}{10}\eta \sum_{i=1}^m \frac{ M_i}{\underline{\alpha}^i_{t}} + \textcolor{black}{8} \eta\sum_{i=1}^m  \frac{(\hat \theta^i_t)^2}{(\underline{\alpha}^i_{t})^2}.
\end{align}
   
\paragraph{Adaptivity of the step-size}
    In the above, to bound the local smoothness of the log barrier  
    % $x_0,\ldots,x_t$, given the safe $x_{t}$, in order to guarantee the boundedness of the log barrier gradient, and local smoothness of the log barrier 
    at the next iterate $x_{t+1} = x_t - \gamma_t g_t$, 
    we require 
    to bound the step size $\gamma_t$ \textcolor{black}{in a way that it ensures that the next iterate remains in the set (\ref{eq:constr-growth}), i.e., such that $f^i(x_{t+1})\leq\frac{f^i(x_t)}{2}$.} 
    % by the value,
    % restricting the next iterate by the area 
    % $\left\{x_{t+1}\in \mathcal X|f^i(x_{t+1})\leq\frac{f^i(x_t)}{2}, x_t\in \mathcal X\right\}.$ 
    Automatically, such condition guarantees the feasibility of $x_{t+1}$ given the feasibility of $x_t$.
    
% \end{itemize} 
    One way to get the adaptive step size $\gamma_t$ is to use the Lipschitz constants $L_i$ of $f^i$ to bound $\gamma_t$ (see \citet{hinder2019poly, usmanova2020safe}):
    $$\gamma_t \leq \min_{i\in[m]}\frac{- f^{i}(x_t)}{2L_i} \frac{1}{\|g_t\|}.$$ 
    In practice, $L_i$ are typically unknown or overestimated. For example, even in the quadratic case $f^i(x) = \|x\|^2$, $L_i$ depends on the diameter of the set $\mathcal X$, and thus might be very conservative in the middle of the set. Again, we propose to use the smoothness constants $M_i$ for safety instead.\footnote{
    \emph{Firstly,} the Lipschitz constant, even if it is tight, provides the first-order linear upper bound on the constraint growth, whereas using the smoothness constant we can exploit more reliable and tight second order upper bound on the constraint growth. \emph{Secondly,} Lipschitz constant is often much harder to estimate since it might strongly depend on the size of the set. By the same reason, in practice, even for the hard functions modeled by a neural network with smooth activation functions, we can estimate the smoothness parameters, but it is much less clear how to estimate the Lipschitz constants properly.} Of course, smoothness parameter also can be overestimated, and a promising direction for future work is to incorporate problem-adaptive techniques \citep{vaswani:hal-03456663} and approaches to efficiently estimate such constants \citep{fazlyab2019efficient}.

\begin{lemma}\label{lemma:adaptive_gamma} 
    The adaptive safe step size $\gamma_t$ bounded by
    $$\gamma_t \leq  \min_{i\in[m]}\left\{
         \frac{\alpha^i_t}{2|\theta_t^i| + \sqrt{\alpha_t^i M_i}} \right\}
    \frac{1}{\|g_t\|},$$ 
     guarantees $f^{i}(x_{t+1})\leq \frac{f^{i}(x_{t})}{2}.$
\end{lemma}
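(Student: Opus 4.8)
The plan is to control each constraint $f^i$ separately through its $M_i$-smoothness, reducing the claim to a one-dimensional quadratic inequality in the scalar step length $s := \gamma_t\|g_t\|$. First I would invoke the $M_i$-smoothness of $f^i$ (\Cref{assumption:1}) applied along the segment from $x_t$ to $x_{t+1} = x_t - \gamma_t g_t$, which gives
\begin{align*}
f^i(x_{t+1}) \leq f^i(x_t) - \gamma_t \la \nabla f^i(x_t), g_t\ra + \frac{M_i}{2}\gamma_t^2\|g_t\|^2.
\end{align*}
Substituting $f^i(x_t) = -\alpha_t^i$, $\la \nabla f^i(x_t), g_t\ra = \theta_t^i\|g_t\|$, and writing $s = \gamma_t\|g_t\|$, and then bounding $-s\theta_t^i \leq s|\theta_t^i|$ (the worst case, which occurs when the step points in a direction that \emph{increases} $f^i$, i.e. $\theta_t^i<0$), this becomes
\begin{align*}
f^i(x_{t+1}) \leq -\alpha_t^i + s|\theta_t^i| + \frac{M_i}{2}s^2.
\end{align*}

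Next I would observe that the desired conclusion $f^i(x_{t+1}) \leq \tfrac{1}{2}f^i(x_t) = -\tfrac{1}{2}\alpha_t^i$ follows immediately once we establish the scalar bound $s|\theta_t^i| + \tfrac{M_i}{2}s^2 \leq \tfrac{1}{2}\alpha_t^i$. Since the quadratic $q(s) := |\theta_t^i|\,s + \tfrac{M_i}{2}s^2$ is nonnegative and increasing for $s\geq 0$, and since the hypothesis forces $s = \gamma_t\|g_t\| \leq \alpha_t^i/(2|\theta_t^i| + \sqrt{\alpha_t^i M_i})$ for this particular $i$ (as $\gamma_t$ is the minimum over all $i$), it suffices to verify $q(s)\leq \tfrac{1}{2}\alpha_t^i$ at the largest admissible value $s = \alpha_t^i/D$ with $D := 2|\theta_t^i| + \sqrt{\alpha_t^i M_i}$. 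Plugging this in and clearing denominators reduces the target inequality to $2|\theta_t^i|\,D + M_i\alpha_t^i \leq D^2$, and expanding $D^2 = 4(\theta_t^i)^2 + 4|\theta_t^i|\sqrt{\alpha_t^i M_i} + \alpha_t^i M_i$ collapses this to $2|\theta_t^i|\sqrt{\alpha_t^i M_i} \geq 0$, which is trivially true.

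The main obstacle here is conceptual rather than technical: the form of the step-size bound is engineered precisely so that the boundary computation is tight up to the nonnegative slack $2|\theta_t^i|\sqrt{\alpha_t^i M_i}$, so the only real care needed is (i) taking absolute values on $\theta_t^i$ so the argument is valid regardless of the sign of the directional derivative, and (ii) noting that passing from the per-constraint bound on $s$ back to the stated min over $i\in[m]$ is automatic. Finally, $f^i(x_{t+1}) \leq \tfrac12 f^i(x_t) < 0$ for every $i$ also records that $x_{t+1}$ remains strictly feasible, which is the feasibility guarantee alluded to after the lemma.
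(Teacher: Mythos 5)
Your proof is correct and begins exactly as the paper's own proof in \Cref{P1}: both apply the $M_i$-smoothness bound $f^i(x_{t+1}) \le f^i(x_t) - \gamma_t\la\nabla f^i(x_t),g_t\ra + \frac{M_i}{2}\gamma_t^2\|g_t\|^2$ and reduce the claim to a scalar quadratic inequality in $s=\gamma_t\|g_t\|$. The two arguments part ways at the verification step. The paper solves the quadratic inequality \emph{exactly}: it completes the square, takes the positive root, and rationalizes to exhibit the largest admissible step $s \le \frac{\alpha_t^i}{\sqrt{(\theta_t^i)^2+M_i\alpha_t^i}-\theta_t^i}$, and only then relaxes this exact threshold to the stated formula using $-\theta_t^i\le|\theta_t^i|$ and subadditivity of the square root. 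You instead take the worst case $-\theta_t^i\le|\theta_t^i|$ at the outset, note that $q(s)=|\theta_t^i|s+\frac{M_i}{2}s^2$ is increasing on $s\ge 0$, and check the required inequality by plugging in the endpoint $s=\alpha_t^i/(2|\theta_t^i|+\sqrt{\alpha_t^i M_i})$, where the algebra collapses to the nonnegative slack $2|\theta_t^i|\sqrt{\alpha_t^i M_i}\ge 0$. Your direct substitution is shorter and avoids the paper's square-root manipulations, which are the only delicate part of its argument; what the paper's derivation buys is the exact, sign-sensitive threshold, which is strictly less conservative whenever $\theta_t^i>0$ (i.e., when the step is a descent direction for $f^i$) and makes transparent how the simplified formula in the lemma statement was engineered. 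Both proofs obtain the per-constraint bound and pass to the minimum over $i\in[m]$ in the same trivial way, so there is no gap in your argument.
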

    The proof is based on the smoothness bound on the constraint growth:
    \begin{align*}
        f^i(x_{t+1})
        & \leq f^i(x_{t}) - \gamma_t \la \nabla f^i(x_t),g_t \ra + \gamma_t^2\frac{M_i}{2}\|g_t\|^2.
    \end{align*}
For the full proof see \Cref{P1}. We illustrate the principle of choosing this adaptive bound on Figure \ref{fig:step-size-illustration}.
\begin{figure}
    \centering
    \includegraphics[width = 0.7\textwidth]{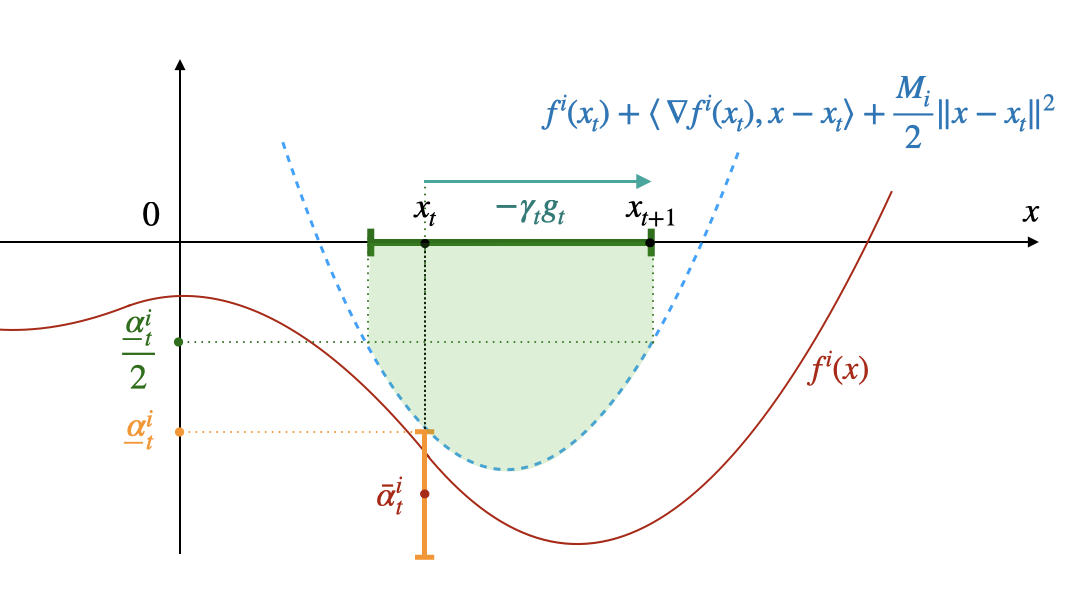}
    \caption{Illustration of the adaptivity. Step size $\gamma_t$ is chosen such that the quadratic smoothness upper bound (\emph{black}) on the constraint guarantees $f^i(x_{t+1}) \leq f^i(x_t)/2$. $\underline \alpha_t^i$ is the lower bound on $\alpha_t^i = -f^i(x_t)$, constructed based on the mean estimator $\bar\alpha_t^i$. By the \emph{orange} interval we denote the confidence interval for $\alpha_t^i$. By the \emph{green} interval we denote the adaptive region for $x_{t+1}$ based on the requirement $f^i(x_{t+1}) \leq f^i(x_t)/2$. }
    \label{fig:step-size-illustration}
\end{figure}
Then, finally, we set the step-size to:
\begin{align}\label{eq:gamma_long}
    \gamma_t =  \min\left\{\min_{i\in[m]}\left\{\frac{\underline{\alpha}^i_t}{2|\hat \theta^i_t| + \sqrt{\underline{\alpha}_t^i M_i}}\right\}
    \frac{1}{\|g_t\|}, \frac{1}{\hat M_2(x_t)}\right\}.
\end{align}

\subsubsection{Basic algorithm}
To sum up, below we propose our basic algorithm, but emphasize that it can be instantiated differently for different problem classes. We showcase possible instantiations of \LBM\, in following  \Cref{section:method_specifications}.
\\ 
\begin{minipage}{\textwidth}
% \fbox{\parbox{0.8\linewidth}{
\begin{algorithm}[H]
	\caption{
	 \LBM ($x_0, \textcolor{black}{\eta},  T\textcolor{black}{, n}$)}
	\label{alg:lb_sgd}
	\small
	\begin{algorithmic}[1]
		\STATE \emph{Input:} $ M_i, \sigma_i, \hat \sigma_i, \hat b_i \in\R_+ ~ \forall i \in \{0,\ldots,m\},$ $R \in\R_+ $, \textcolor{black}{$\eta \in\R_+ $,} $n\in \mathbb N$, $T \in \mathbb N$, $\de\in [0,1]$;
	     \FOR {$t = 1,\ldots,T$ } 
		    \STATE  Set $g_t$  $\leftarrow \mathcal O_{\eta}(x_t,  n)$ by taking a batch of measurements of size $n$ at $x_t$;
		    \STATE  Compute lower bounds $ \underline{\alpha}_t^i := \bar{\alpha}_t^i - \sigma_i(n)\sqrt{\ln\frac{1}{\de}}$
		  %  \leftarrow [- f^i_{\de}(x_t)]_+
		  $, ~ \forall i\in[m]$;
		    \STATE Compute upper bounds $\hat \theta_t^i = |\la G^i_n(x, \xi), \frac{ g_t}{\| g_t\|}\ra| + \hat b_i +\hat \sigma_i(n) \sqrt{\log\frac{1}{\de}}
		  %  , l
		    , ~ \forall i\in[m] $;
		    \STATE Compute $\hat M_2(x_t)$ using (\ref{eq:M2});
		    \STATE    $\gamma_t\leftarrow \min\left\{\min_{i\in[m]}\left\{\frac{\underline{\alpha}^i_t}{2|\hat \theta^i_t| + \sqrt{\underline{\alpha}_t^i M_i}}\right\}
    \frac{1}{\|g_t\|},%\frac{1}{\hat M_2(x_t)}, 
    \frac{1}{\hat M_2(x_t)}\right\}$;
            \STATE 
		    $x_{t+1} \leftarrow x_{t} - \gamma_t  g_t $;
        \ENDFOR
	\STATE \emph{Output:} $\{x_1,\ldots,x_T\}$.
	\end{algorithmic}
\end{algorithm}
\end{minipage}
\textcolor{black}{In the above, the input parameters are: the smoothness constant $ M_i$ of each function $f^i$ for $ i \in \{0,\ldots,m\}$, the bound on the variance of its value measurements $\sigma^2_i$,  the bound on the variance of its gradient measurements $\hat \sigma^2_i$ and the upper bound on the bias of its gradient measurements $\hat b_i$, the bound on the diameter $R$ of the set $\mathcal X$, the log barrier parameter $\eta$, the number of measurements per iteration $n$, the number of iterations $T$, and the confidence parameter  $\de$.} 
\subsection{Safety} 
From the safety side, the adaptive step-size $\gamma_t$ automatically guarantees the safety of all the iterates due to construction, for any procedure generating the iterations in the form $x_{t+1} = x_t - \gamma_t g_t$ where $\gamma_t$ is bounded by (\ref{eq:gamma_long}).  The feasibility of the optimization trajectory we guarantee with probability at least $1 - \hat\de$ with $\hat \de := mT\de.$
\begin{theorem}\label{thm:safety}
    Let $T>0$ denote the total number of iterations of the form  (\ref{eq:procedure}), and $\hat \de\in(0,1)$ denote the target confidence level. Then, for \LBM~ with parameter $ \de \leq \hat \de/mT$, all the query points $x_t$ 
    % and $x_{t} + \nu s_{tj}$ 
    are feasible with probability greater than $1-\de$.
\end{theorem}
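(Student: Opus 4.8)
The plan is to prove feasibility by induction over the iterations, with the base case supplied by \Cref{assumption:3} ($f^i(x_0)\le-\beta<0$, so $x_0$ is strictly feasible) and the inductive step supplied by \Cref{lemma:adaptive_gamma}. The core deterministic fact is that \Cref{lemma:adaptive_gamma} guarantees $f^i(x_{t+1})\le f^i(x_t)/2$ whenever the \emph{true} step-size bound $\gamma_t\le \min_i \frac{\alpha_t^i}{2|\theta_t^i|+\sqrt{\alpha_t^i M_i}}\frac{1}{\|g_t\|}$ holds; since $f^i(x_t)<0$ by the inductive hypothesis, this yields $f^i(x_{t+1})<0$, i.e. $x_{t+1}$ stays strictly feasible, so the next log-barrier gradient and step size are well defined. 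The whole argument therefore reduces to showing that the \emph{estimated} step size actually used in (\ref{eq:gamma_long}) satisfies the true bound of \Cref{lemma:adaptive_gamma}, with high probability.

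First I would establish the purely deterministic monotonicity that makes the plug-in step size conservative. Write $g(\alpha,\theta):=\frac{\alpha}{2|\theta|+\sqrt{\alpha M_i}}$. A one-line derivative computation shows $g$ is nondecreasing in $\alpha$: using $\partial_\alpha\big(2|\theta|+\sqrt{\alpha M_i}\big)=\frac{M_i}{2\sqrt{\alpha M_i}}$, the numerator of $\partial_\alpha g$ equals $2|\theta|+\tfrac12\sqrt{\alpha M_i}>0$; and $g$ is trivially nonincreasing in $|\theta|$. Hence, on the event $\{\underline{\alpha}_t^i\le\alpha_t^i\}\cap\{\hat\theta_t^i\ge|\theta_t^i|\}$, we get $g(\underline{\alpha}_t^i,\hat\theta_t^i)\le g(\alpha_t^i,|\theta_t^i|)$. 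Since the step size (\ref{eq:gamma_long}) is the minimum of $\min_i g(\underline{\alpha}_t^i,\hat\theta_t^i)\frac{1}{\|g_t\|}$ and $\frac{1}{\hat M_2(x_t)}$ --- the latter only shrinking $\gamma_t$ and hence preserving safety --- the hypothesis of \Cref{lemma:adaptive_gamma} holds and $f^i(x_{t+1})\le f^i(x_t)/2$ follows for every $i$.

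Next I would control the two confidence events probabilistically using the oracle assumptions of \Cref{ssec:oracle}. Conditioning on the history $\mathcal F_{t-1}$ (so that $x_t$, and hence the true $\alpha_t^i=-f^i(x_t)$ and $\theta_t^i=\la\nabla f^i(x_t),g_t/\|g_t\|\ra$, are fixed up to the fresh batch $\xi_t$), the sub-Gaussian value concentration gives $\Prob\{\alpha_t^i\ge\underline{\alpha}_t^i\}\ge1-\de$ directly from $\underline{\alpha}_t^i=\bar\alpha_t^i-\sigma_i(n)\sqrt{\ln(1/\de)}$, and the gradient bias/variance bound gives $\Prob\{|\theta_t^i|\le\hat\theta_t^i\}\ge1-\de$ via $|\la\nabla f^i,v\ra|\le|\la G^i_n,v\ra|+|\la\nabla f^i-G^i_n,v\ra|$ with $v=g_t/\|g_t\|$. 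A union bound over the $m$ constraints (pooling value- and gradient-confidence into one constraint--iteration event, with $\de$ chosen accordingly) makes all step-$t$ events hold with conditional probability at least $1-m\de$. Finally I would close the induction: define the failure event as the union over $t\in[T]$ and $i\in[m]$ of the per-step confidence failures; on its complement every iterate is feasible, and by the tower property plus the union bound its probability is at most $mT\de=\hat\de$ (using $\de\le\hat\de/mT$), giving feasibility of all $x_t$ with probability at least $1-\hat\de$.

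The main obstacle I anticipate is the dependence between the random direction $g_t/\|g_t\|$ and the gradient estimator $G^i_n$ used to form $\hat\theta_t^i$: because $g_t$ is built from the same noisy measurements, one cannot treat $v=g_t/\|g_t\|$ as a fixed direction when projecting $G^i_n-\nabla f^i$. The clean fix is to bound $|\la G^i_n-\nabla f^i,v\ra|\le\|G^i_n-\E G^i_n\|+\hat b_i$ uniformly over unit $v$ and then use the variance bound on $\|G^i_n-\E G^i_n\|$, decoupling the direction from the deviation. A secondary subtlety is the inductive coupling between feasibility and the validity of the oracle concentration --- the step-$t$ bound only makes sense once $x_t\in\mathcal X$ --- which is handled precisely by conditioning on $\mathcal F_{t-1}$ and carrying ``all previous iterates feasible'' as part of the inductive hypothesis, rather than by defining the global good event all at once.
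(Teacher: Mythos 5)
Your proposal is correct and follows essentially the same route as the paper: per-step constraint halving via Lemma \ref{lemma:adaptive_gamma} applied to the conservative plug-in step size, induction from the safe starting point of Assumption \ref{assumption:3}, and Boole's inequality over constraints and iterations to get $1-mT\de\geq 1-\hat\de$. The paper's own proof is a two-sentence version of this; your extra steps (the monotonicity argument showing $\gamma_t$ built from $\underline{\alpha}_t^i,\hat\theta_t^i$ stays within the true bound of Lemma \ref{lemma:adaptive_gamma}, and the decoupling of $g_t/\|g_t\|$ from the gradient deviation) simply make explicit what the paper leaves implicit.
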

\begin{proof}
Due to the adaptive step size $\gamma_t$, we have $x_{t}\in \mathcal X$ implies  $x_{t+1} \in \mathcal X$
(see Lemma \ref{lemma:adaptive_gamma}) since $f^{i}(x_{t+1})\leq \frac{f^{i}(x_{t})}{2}$ with probability $1-\de$. 
Then, using  $x_0\in \mathcal X$ and Boole's inequality, we conclude that the whole optimization trajectory $\{x_t\}_{t\in[T]}$ is
% and the measurements around $x_t$'s are 
feasible with probability at least $1 - mT\de \geq 1- \hat \de$.
\end{proof}
 
\section{Method Variants and Convergence Analysis}\label{section:method_specifications}
% \clearpage

First of all, let us show the following general property of the log barrier method, important for the further convergence analysis of any problem type that we discuss. 

\subsection{Keeping a distance away from the boundary}
 Imagine that $\underline{\alpha}_t^i$ becomes $0$ for some iteration $t$ during the learning. That would lead to $\gamma_t = 0$, and the algorithm will stop without converging, since there is no safe non-zero step-size. Moreover, the log barrier gradient at that point simply blows up. However, we can lower bound the step sizes $\gamma_t$ if we can provide a lower bound on $\underline{\alpha}_t^i$ for all $t>0$ during the learning with high probability:
 \begin{lemma}\label{lemma:gamma}
If $\underline{\alpha}_t^i \geq c\eta$ for $c>0$, then we have $\Prob\{\gamma_t \geq C\eta \}\geq 1-\de$ with $C$ defined by
\begin{align}\label{eq:C}
    C := \frac{c}{2L^2(1+\frac{m}{c})}\frac{1}{\max \left\{
         \textcolor{black}{4} + \frac{\textcolor{black}{5}Mc\eta}{L^2}, 1 +  \sqrt{\frac{Mc\eta}{4L^2}} \right\}}.    
        % C := 
        % \max\left\{
        % \frac{c^2}{mL} , \frac{c}{2L(L+1)(1+\frac{m}{c})}\right\}.
    \end{align}
\end{lemma}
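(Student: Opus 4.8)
The plan is to lower-bound separately the two quantities inside the minimum defining $\gamma_t$ in \eqref{eq:gamma_long}, show each is at least $C\eta$, and then use that $\gamma_t$ is their minimum. All the randomness enters through $g_t$, the upper bounds $\hat\theta_t^i$, and the value estimates $\bar\alpha_t^i$, so I would work on the event of probability at least $1-\de$ on which the concentration bounds underlying \Cref{lemma:LBGradEstimatorVarBias} hold. On this event $\alpha_t^i \ge \underline\alpha_t^i \ge c\eta$, each $\|\nabla f^i(x_t)\|\le L_i\le L$, and the noisy gradient quantities are controlled by $L$ (the bias $\hat b_i$ and the noise $\hat\sigma_i(n)$ being dominated by the Lipschitz bound); in particular $\hat\theta_t^i\le L$ and $M_i\le M$.

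First I would bound $\|g_t\|$. Writing $g_t=\nabla B_{\eta}(x_t)+\Delta_t$ and using $\alpha_t^i\ge c\eta$, the triangle inequality gives $\|\nabla B_{\eta}(x_t)\|\le L_0+\eta\sum_{i=1}^m L_i/\alpha_t^i\le L(1+m/c)$, and on the good event the deviation $\|\Delta_t\|$ from \Cref{lemma:LBGradEstimatorVarBias} is likewise absorbed, so $\|g_t\|\le L(1+m/c)$. This produces the factor $1+m/c$ appearing in $C$. Next, for the first term I would use that $a\mapsto a/(2\theta+\sqrt{aM_i})$ is nondecreasing in $a$, so substituting the smallest admissible value $\underline\alpha_t^i=c\eta$ together with $\hat\theta_t^i\le L$ and $M_i\le M$ yields
\[
\frac{\underline\alpha_t^i}{2|\hat\theta_t^i|+\sqrt{\underline\alpha_t^i M_i}}\cdot\frac{1}{\|g_t\|}\ \ge\ \frac{c\eta}{\bigl(2L+\sqrt{c\eta M}\bigr)\,L(1+m/c)}\ =\ \frac{c}{2L^2(1+m/c)}\cdot\frac{\eta}{1+\sqrt{Mc\eta/(4L^2)}},
\]
where the last equality factors $2L+\sqrt{c\eta M}=2L\bigl(1+\sqrt{Mc\eta/(4L^2)}\bigr)$. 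This is exactly $C\eta$ for the second branch of the max in \eqref{eq:C}.

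Finally, for the second term I would plug $\underline\alpha_t^i\ge c\eta$, $M_i\le M$, $\hat\theta_t^i\le L$, and $M_0\le M$ into the estimate \eqref{eq:M2_estimate} to obtain $\hat M_2(x_t)\le M+10mM/c+8mL^2/(c^2\eta)$. Comparing with $1/(C\eta)$ evaluated at the first branch $4+5Mc\eta/L^2$ of the max, a direct expansion of $1/(C\eta)=\tfrac{8L^2(1+m/c)}{c\eta}+10M(1+m/c)$ gives $1/(C\eta)-\hat M_2(x_t)\ge 8L^2/(c\eta)+9M>0$, hence $1/\hat M_2(x_t)\ge C\eta$. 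Because $C$ carries the maximum of the two branches in its denominator, $C\eta$ sits simultaneously below both derived lower bounds, so both terms of the minimum exceed $C\eta$ and $\gamma_t\ge C\eta$ on the good event.

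The main obstacle is the second term: the last summand of $\hat M_2$ grows like $1/\eta$ as the iterate nears the boundary, so it competes with the first term in a different regime of $\eta$; reconciling the two is precisely why a maximum of two expressions, rather than a single one, must appear in $C$, and matching both branches is what pins down the constants $4,5,8,10$ in \eqref{eq:M2_estimate} and \eqref{eq:C}. A secondary subtlety is verifying that after absorbing the oracle's bias and variance into the $L$-based bounds, the strict inequality $1/(C\eta)-\hat M_2(x_t)\ge 0$ indeed survives with the claimed probability $1-\de$ via a union bound over the underlying concentration events.
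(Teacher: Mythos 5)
Your proposal is correct and follows essentially the same route as the paper's proof in Appendix~\ref{Appendix:5}: bound $\|g_t\|$ by $L(1+\frac{m}{c})$ and plug $\underline{\alpha}_t^i \geq c\eta$, $\hat\theta_t^i \leq L$, $M_i \leq M$ into both branches of the minimum in (\ref{eq:gamma_long}), bounding the step-size branch via the factorization $2L+\sqrt{Mc\eta}=2L\bigl(1+\sqrt{Mc\eta/(4L^2)}\bigr)$ and the $1/\hat M_2(x_t)$ branch via $\hat M_2(x_t)\leq (1+\frac{10m}{c})M+\frac{8mL^2}{c^2\eta}$, then absorbing both into the common prefactor with the max in the denominator. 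Your explicit verification that $1/(C\eta)-\hat M_2(x_t)\geq 8L^2/(c\eta)+9M>0$ is a slightly cleaner way of writing the paper's chain of inequalities, but the decomposition, the constants, and the probabilistic bookkeeping are the same.
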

The proof is shown in Appendix \ref{Appendix:5}.
 
 Therefore, for convergence, we need to show that our algorithm's iterates $x_t$ do not only stay inside the feasible set, but moreover keep a distance away from the boundary.
 Keeping distance is the key property, guaranteeing the regularity of the log barrier function in the sense of a bounded gradient norm, bounded local smoothness and bounded variance.
 For the exact information case without noise, the adaptive gradient descent on the log barrier is shown to converge without stating this property explicitly \citep{hinder2018one}. 
 However, in the stochastic case, this property becomes crucial for establishing  stable convergence.
 It guarantees that the method pushes the iterates $x_t$ away from the boundary of the set $\mathcal X$ as soon as they come too close to the boundary. We formulate it below.
%  This also allows to obtain a lower bound on the step sizes $\gamma_t$.
% \textcolor{black}{Add bound on the bias}
\begin{lemma}\label{lemma:keeping_distance}
    Let Assumptions 
    % \ref{assumption:2_diameter},
    \ref{assumption:1} and \ref{assumption:3} hold,  Assumption \ref{assumption:mfcq} hold 
    with $\rho \geq \eta$, 
    and let $\hat \sigma_i(n) \leq \frac{\alpha_t^i L}{\eta\sqrt{\ln \frac{1}{\de}}}$,
    $\hat b_i \leq \frac{\alpha_t^i L}{2\eta}$, and
    $\sigma_i(n) \leq \frac{(\alpha_t^i)^2}{2\eta\sqrt{\ln \frac{1}{\de}}}$. 
    Then, we can show that 
    for all $x_t$  for all iterations $t\in [T]$
    % of \LBM \, 
    generated by the optimization process $x_{t+1} = x_t - \gamma_t g_t$ the following holds:
    $\Prob\{\forall t \in[T]~ \min_{i\in[m]}\underline{\alpha}_t^i \geq c\eta\}\geq 1-\hat \de$ with  
    $$c := \textcolor{black}{\frac{1}{2}}\left(\frac{l}{4L(2m+1)}\right)^m,$$
    where $l>0$ is defined as in \Cref{assumption:mfcq}.
% SHOW THE PLOT DEMONSTRATION FOR DIFFERENT $\eta$ VALUES
\end{lemma}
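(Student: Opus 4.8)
The plan is to prove the bound by induction on the iteration index $t$, showing that the event $\mathcal E_t := \{\min_{i\in[m]}\underline{\alpha}_t^i \geq c\eta\}$ holds for all $t\in[T]$. For the base case $t=0$, Assumption \ref{assumption:3} gives $\alpha_0^i = -f^i(x_0)\geq \beta \geq 2\rho \geq 2\eta$ (using $\rho\leq\frac{\beta}{2}$ and the hypothesis $\rho\geq\eta$), so that $\alpha_0^i$ sits comfortably above the threshold; the assumed value-noise bound $\sigma_i(n)\leq \frac{(\alpha_0^i)^2}{2\eta\sqrt{\ln(1/\de)}}$ then ensures $\underline\alpha_0^i \geq c\eta$ on a $1-\de$ event. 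I would carry out the entire core argument on the deterministic quantities $\alpha_t^i,\theta_t^i,\nabla B_{\eta}(x_t)$ and transfer it to the estimates $\underline\alpha_t^i,\hat\theta_t^i,g_t$ only at the end, via the confidence bound of \Cref{lemma:LBGradEstimatorVarBias} and a union bound over the $mT$ measurements, which is precisely where the relation $\hat\de = mT\de$ enters.

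For the inductive step I would assume $\min_i \alpha_t^i \geq c\eta$ and split the constraints according to their slack at $x_t$. Constraints that are comfortably far, with $\alpha_t^i \geq 2c\eta$, are handled immediately: the adaptive step size of \Cref{lemma:adaptive_gamma} enforces $f^i(x_{t+1})\leq \frac{f^i(x_t)}{2}$, i.e. $\alpha_{t+1}^i \geq \frac{\alpha_t^i}{2}\geq c\eta$, so these cannot fall below the threshold in one step. The work therefore concentrates on the near-active constraints, those with $\alpha_t^i < 2c\eta < \rho$, for which I must show that the barrier gradient actively pushes $x_{t+1}$ away from the boundary so that $\alpha_{t+1}^i \geq c\eta$.

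The mechanism for this repulsion is the dominance of the barrier term $\frac{\eta\nabla f^i(x_t)}{\alpha_t^i}$ inside $\nabla B_{\eta}(x_t)$ whenever $\alpha_t^i$ is small. Using Assumption \ref{assumption:mfcq}, there is a unit direction $s$ with $\la s, \nabla f^i(x_t)\ra > l$ for every $i\in\mathcal I_{\rho}(x_t)$; projecting $\nabla B_{\eta}(x_t)$ onto $s$ and separating active from inactive constraints, bounding the objective and inactive contributions by $L$ through the Lipschitz continuity of \Cref{assumption:1} and using $\rho\geq\eta$, shows $\la \nabla B_{\eta}(x_t), s\ra$ is large and positive, so the step moves opposite to $s$ and decreases every active $f^i$. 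The delicate point is converting this aggregate statement into a per-constraint lower bound on the increase of each individual $\alpha^i$, since repelling from the most active constraint may, through negative cross terms $\la \nabla f^i(x_t), \nabla f^j(x_t)\ra$, push toward another near-active constraint.

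This coupling is the main obstacle, and I expect it to force a nested argument over the near-active constraints sorted by slack $\alpha^{(1)}\leq\alpha^{(2)}\leq\cdots$. At each level one isolates the contribution of the smallest remaining slack relative to the next and loses a fixed multiplicative factor $\frac{l}{4L(2m+1)}$ while controlling the worst-case configuration of the $m$ gradient directions; iterating over at most $m$ constraints produces exactly the constant $c=\frac{1}{2}\bigl(\frac{l}{4L(2m+1)}\bigr)^m$. Throughout, the smoothness term $\frac{\gamma_t^2 M_i}{2}\|g_t\|^2$ appearing in the constraint-growth expansion of \Cref{lemma:adaptive_gamma} must be absorbed into the repulsion, which is exactly why the step-size bound $\gamma_t\leq \frac{\underline\alpha_t^i}{2|\hat\theta_t^i|+\sqrt{\underline\alpha_t^i M_i}}\frac{1}{\|g_t\|}$ carries the $\sqrt{\alpha_t^i M_i}$ denominator. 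Finally, I would replace the exact quantities by their high-probability surrogates $\underline\alpha_t^i,\hat\theta_t^i$, using the assumed bounds on $\hat\sigma_i(n),\hat b_i$ and $\sigma_i(n)$ to guarantee the noise never overwhelms the repulsion, and close the induction with the union bound giving confidence $1-\hat\de$.
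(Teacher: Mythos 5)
Your proposal correctly sets up the probabilistic scaffolding (union bound over the $mT$ measurement events, base case from Assumption \ref{assumption:3}, the halving bound $\alpha_{t+1}^i \geq \alpha_t^i/2$ for constraints far from the boundary), and you correctly identify the central difficulty: the barrier's repulsion is an aggregate effect, and negative cross terms $\la \nabla f^i(x_t), \nabla f^j(x_t)\ra$ can make the step decrease one near-active slack while increasing another. But you do not resolve this difficulty --- you only say you ``expect'' a nested argument over slack-sorted constraints to work --- and that is exactly where the proof lives. Worse, your inductive invariant itself is untenable: the event $\min_i \alpha_t^i \geq c\eta$ does not propagate one step at a time, because even when the barrier term dominates, an individual near-active slack may still shrink (by up to a factor of $2$) while another grows; no per-constraint, per-step statement of the form ``$\alpha_{t+1}^i \geq c\eta$ for every near-active $i$'' is available, so a single-step induction on the minimum slack cannot close.

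The paper's resolution is a potential-function argument that is absent from your proposal. It tracks the \emph{product} $\prod_{i\in\I_t}\alpha_t^i$ (equivalently the partial barrier sum $\eta\sum_{i\in\I_t}-\log\alpha_t^i$) over the near-active set $\I_t = \{i : \alpha_t^i \leq \eta\}$. Writing $g_t = \eta(A+B)$ with $A = \sum_{i\in\I_t}\nabla f^i(x_t)/\alpha_t^i$, local smoothness of the partial barrier together with the step-size bound yields $\eta\sum_{i\in\I_t}-\log\alpha_{t+1}^i \leq \eta\sum_{i\in\I_t}-\log\alpha_t^i + \frac{\gamma_t\eta^2}{2}\left(\|B\|^2 - \|A\|^2\right)$; MFCQ gives $\|A\| \geq l\sum_{i\in\I_t}1/\alpha_t^i$, Lipschitz continuity gives $\|B\| \leq \frac{L}{\eta}(2m+1)$, so whenever $\min_i\alpha_t^i \leq \bar c\eta$ with $\bar c = \frac{l}{L(2m+1)}$ the product is non-decreasing (this is Fact \ref{fact:alpha_prod}). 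Crucially, individual slacks are then lower-bounded only at the end: the paper runs a multi-step induction over an entire excursion $\{t_0,\dots,t_k\}$ below the threshold, handling constraints entering and leaving $\I_t$, and divides the conserved product by the upper bound of order $2\eta$ on each of the at most $m$ other factors. That division is what produces the exponent $m$ in $c = \frac12\left(\frac{l}{4L(2m+1)}\right)^m$ --- not a per-level loss in a nested repulsion argument. Without this product/potential idea (or a genuinely worked-out substitute), the proposal does not constitute a proof.
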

\begin{proof}
First, let us note the following fact demonstrating that the product of the smallest absolute constraint values is not decreasing if $x_t$ is close enough to the boundary.
 \begin{fact}\label{fact:alpha_prod}
    Let Assumptions 
    % \ref{assumption:2_diameter},
    \ref{assumption:1}, \ref{assumption:3} hold, Assumption \ref{assumption:mfcq} hold with $\rho \geq  \eta$, and let 
     $\hat \sigma_i(n) \leq \frac{\alpha_t^i L}{2\eta\sqrt{\ln \frac{1}{\de}}}$, $\hat b_i \leq \frac{\alpha_t^i L}{2\eta}$, and
    $\sigma_i(n) \leq \frac{(\alpha_t^i)^2}{2\eta\sqrt{\ln \frac{1}{\de}}}$.   If at iteration $t$ we have $\min_{i\in[m]} \alpha_t^i \leq \bar c\eta$ 
    with  $\bar c := \frac{l}{L} \frac{1}{2m+1}$, then, for the next iteration $t+1$ we get 
    $\prod_{i\in \mathcal \I }\alpha_{t+1}^i\geq \prod_{i\in \mathcal \I }\alpha_t^i$ for any $\I : \I_t \subseteq \I$ with $\I_t :=\{i\in[m]:\alpha_t^i\leq \eta\}$ with probability $1-\de$.
\end{fact} 
For the proof see Appendix \ref{Appentix:Proof_fact:alpha_prod}.

Note that if $\min_{i\in[m]} \alpha_t^i \geq \bar c \eta  $ for all $t\geq 0$, then the statement of the Lemma holds automatically. Now, consider a consecutive set of steps $t = \{t_0,\ldots,t_k\}$ on whose $\min_{i\in[m]} \alpha_t^i \leq \bar c\eta$. 
By definition, and using  Fact \ref{fact:alpha_prod}, for any $t\in \{t_0,\ldots,t_k\}$ we have with probability $1-\de$
\begin{align*}
    \prod_{i\in \mathcal I_{t+1}} \alpha_{t+1}^i 
    = \frac{\prod_{i\in \mathcal I_{t} \cup \I_{t+1}} \alpha_{t+1}^i}{ \prod_{i\in \mathcal I_{t} \setminus \I_{t+1}} \alpha_{t+1}^i}
    \geq \frac{\prod_{i\in \mathcal I_{t} \cup \I_{t+1}} \alpha_{t}^i}{ \prod_{i\in \mathcal I_{t} \setminus \I_{t+1}} \alpha_{t+1}^i}
    .
\end{align*}
By induction, applying the above sequentially for all $t \in \{t_0,\ldots,t_k\}$ we can get
\begin{align*}
    \prod_{i\in \mathcal I_{t_k}} \alpha_{t_k}^i 
    \geq 
    \frac{\prod_{i\in \mathcal I_{t_k} \cup \I_{t_{k - 1}} \cup \ldots \cup \I_{t_0}} \alpha_{t_0}^i}{ \prod_{i\in \mathcal I_{t_{k - 1}} \setminus \I_{t_k}} \alpha_{t_k}^i \prod_{i\in \mathcal I_{t_{k-2}} \setminus (\I_{t_k}\cup \I_{t_{k - 1}})}\alpha_{t_{k - 1}}^i\ldots  \prod_{i\in \mathcal I_{t_0} \setminus (\I_{t_k}\cup \ldots \cup \I_{t_1})} \alpha_{t_1}^i
    }
\end{align*}
 with probability $1 - \hat \de$ (using Boole's inequality).

Note that by definition of $\I_t$: $\alpha_t^i \leq \eta$ for $ i \in \I_t$. 
At the same time, due to the step size choice, we have  $\alpha_{t+1}^i\leq 2\alpha_t^i \leq 2 \eta.$ Also, note that the sum of the set cardinalities in the denominator equals to the cardinality of the set $|\I_{t_0}\cup\ldots\cup\I_{t_k}\setminus \I_{t_k}|$. Hence, with probability $1-\hat \de$
\begin{align*}
    \prod_{i\in \mathcal I_{t_k}} \alpha_{t_k}^i  
    \geq \frac
    {\prod_{i\in \mathcal I_{t_k} \cup \I_{t_{k-1}} \cup \ldots \cup \I_{t_0}} \alpha_{t_0}^i}
    { (2 \eta)^{|\I_{t_0}\cup\ldots\cup\I_{t_k}\setminus \I_{t_k}|}
    }.
\end{align*}
Thus, for any $j\in\I_{t_k}$ we get the bound:
\begin{align*}
   \alpha_{t_k}^j  
    \geq \frac
    {\prod_{i\in \mathcal I_{t_k} \cup \I_{t_{k-1}} \cup \ldots \cup \I_{t_0}} \alpha_{t_0}^i}
    { (2\hat c \eta)^{|\I_{t_0}\cup\ldots\cup\I_{t_k}\setminus j|}
    }\geq \frac{(\bar c\eta/2)^{|\I_{t_0}\cup\ldots\cup\I_{t_k}|}}{2^{|\I_0\cup\ldots\cup\I_{t_k}|-1}} \geq \left(\frac{\bar c}{4}\right)^{m}\eta.
\end{align*}
\textcolor{black}{Since $\sigma_i(n)\leq \frac{\alpha_t^i}{2\sqrt{\ln \frac{1}{\de}}}$, we get that $\underline{\alpha}_t^i \geq \frac{\alpha_t^i}{2}\geq  \frac{1}{2}\left(\frac{\bar c}{4}\right)^{m}\eta$ with probability $1-\de$.  }
Using the definition 
$\bar c := \frac{l}{ L} \frac{1}{ 2m + 1},$ we obtain the statement of the lemma.
\end{proof}

\subsection{Stochastic non-convex problems}
For the non-convex problem we analyse \LBM$(x_0\textcolor{black}{, \eta}, T\textcolor{black}{, n})$ with the fixed parameter $\eta$, that uses the stopping criterion \begin{align}\label{stopping}
\|g_t\|\leq 3\eta/4
\end{align} and outputs $x_{\hat t}$ with $\hat t$ corresponding to $\arg\min_{t\in T}\|g_t\|$.
\subsubsection{Stationarity criterion in the non-convex case}
    Similarly to \citet{usmanova2020safe}, we can state that in general case small gradient of the log barrier with parameter $\eta$ leads to an $\eta$-approximate KKT point of the constrained problem. Let us set the pair of primal and dual variables to $(x, \lambda) := \left(x, \left[\frac{\eta}{- f^1(x)},\ldots,\frac{\eta}{- f^m(x)}\right]^T\right)$. Then, it satisfies:
    \begin{align*} 
    &1)~\|\nabla_x \mathcal L(x,\lambda)\| = \|\nabla B_{\eta}(x)\|;\\
    &2)~\lambda^i(-f^i(x)) = \frac{\eta}{- f^i(x)}(-f^i(x)) = \eta;\\
    &3)~\lambda^i \geq 0, -f^i(x) \geq 0, ~ i \in [m].
    \end{align*}
    This insight immediately implies the following Lemma.
    % Then, it is easy to verify that if $x$ is such that $\|\nabla B_{\eta}(x)\|\leq \eta$, the pair $(x, \lambda)$ satisfies the $\eta$-approximate KKT conditions (\ref{KKT.1})-(\ref{KKT.3}):
%     \begin{fact}\label{lemma:log_b_KKT}
%     % The pair $(x,\lambda)$ defined above
%     % satisfies
% The above immediately implies that if $\hat x$ satisfies $\|\nabla B_{\eta}(\hat x)\|\leq \eta$, then pair $(\hat x, \hat \lambda)$ is an $\eta$-approximate solution of (\ref{problem}).
% % KKT conditions (\ref{KKT.1})-(\ref{KKT.3}) hold. 
% \end{fact} 
    \begin{lemma}\label{crit:1}
        Consider problem (\ref{problem}) under Assumptions 
        % \ref{assumption:2_diameter}, 
        \ref{assumption:1}, and  \ref{assumption:3}. Let $\hat x$ be an $\eta$-approximate solution to $\min_{x\in \R^d} B_{\eta}(x)$, the  $\eta$-log barrier approximation  of (\ref{problem}), such that $\|\nabla B_{\eta}(\hat x)\|\leq \eta$. Then, $\hat x$ is an $\eta$-approximate KKT point to the original problem (\ref{problem}).
    \end{lemma}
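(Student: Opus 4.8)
The plan is to exhibit an explicit choice of dual variables and then verify the three $\eta$-KKT conditions by direct computation, exactly along the lines of the three-item calculation displayed just before the lemma. Concretely, given $\hat x$ with $\|\nabla B_\eta(\hat x)\|\leq \eta$, I would set the dual vector to
\begin{align*}
    \lambda := \left[\frac{\eta}{-f^1(\hat x)}, \ldots, \frac{\eta}{-f^m(\hat x)}\right]^T.
\end{align*}
The first thing to note — and the only point that needs any care — is that $B_\eta$ and $\nabla B_\eta$ from (\ref{eq:LB}) are defined only on $Int(\mathcal X)$. Hence the hypothesis $\|\nabla B_\eta(\hat x)\|\leq\eta$ already presupposes $\hat x\in Int(\mathcal X)$, so $-f^i(\hat x)>0$ strictly for every $i\in[m]$. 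This guarantees that each $\lambda^i$ is well-defined and strictly positive, which is what makes the construction legitimate.

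With this choice, the verification is immediate. Condition (\ref{KKT.1}) holds since $\lambda^i>0\geq 0$ and $-f^i(\hat x)>0\geq 0$ for all $i$. Condition (\ref{KKT.2}) holds with equality (hence within tolerance), because $\lambda^i(-f^i(\hat x)) = \frac{\eta}{-f^i(\hat x)}\cdot(-f^i(\hat x)) = \eta$ for each $i$. For condition (\ref{KKT.3}), I would substitute $\lambda$ into the Lagrangian gradient and recognize it as the barrier gradient:
\begin{align*}
    \nabla_x \mathcal L(\hat x,\lambda)
    = \nabla f^0(\hat x) + \sum_{i=1}^m \lambda^i \nabla f^i(\hat x)
    = \nabla f^0(\hat x) + \eta\sum_{i=1}^m \frac{\nabla f^i(\hat x)}{-f^i(\hat x)}
    = \nabla B_\eta(\hat x),
\end{align*}
where the last equality is just the definition (\ref{eq:LB}). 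Therefore $\|\nabla_x \mathcal L(\hat x,\lambda)\| = \|\nabla B_\eta(\hat x)\|\leq \eta$, completing the three conditions.

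I do not expect any genuine obstacle here: the lemma is an exact algebraic correspondence between a small barrier-gradient norm and an approximate KKT point, and the proof is a one-line-per-condition check once the dual variables are named. The only item worth stating explicitly is the interiority observation above, so that the $\lambda^i$ are well-defined. All the substantive difficulty — showing that \LBM{} actually produces an iterate with $\|\nabla B_\eta(\hat x)\|$ small despite the biased, heavy-tailed estimator and the non-smoothness of the barrier — is deferred to the convergence theorems and does not enter this lemma.
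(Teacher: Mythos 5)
Your proof is correct and is essentially identical to the paper's own argument: the paper establishes this lemma by the same choice of dual variables $\lambda^i = \eta/(-f^i(\hat x))$ and the same three-condition verification displayed just before the lemma statement. Your explicit remark on interiority (so the $\lambda^i$ are well-defined and strictly positive) is a small, sensible addition that the paper leaves implicit.
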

    % The proof is quite straightforward, see \citet{hinder2018one}, we also provide it in the appendix for completeness. 
    Thus, \textcolor{black}{we can use a small log barrier norm to guarantee stationarity for problem 
    % as a stationarity criterion for 
    (\ref{problem}).}
    % the stationarity criterion for the general case is the small log barrier gradient norm.
\subsubsection{Convergence for the non-convex problem}    
% For the  non-convex problem we run \LBM \, for the fixed $\eta$, but instead of $\bar x_T$, we return the point $x_{\hat t}$ corresponding to the smallest $\|g_t\|$, that is, $\hat t := \arg\min_{t\in[T]} \|g_t\|.$ We stop our algorithm as soon as $\|g_t\|\leq 3\eta/4$. 
Then, we get the following convergence result:
\begin{theorem}\label{thm:non-convex}
    After at most $T$ iterations of \LBM ~with
    $T\leq \textcolor{black}{11}\frac{B_{\eta}(x_{0}) - \min_x B_{\eta}(x)}{C \eta^3},$ 
    and with $\sigma_i(n) = O(\textcolor{black}{\eta^{2}})$, $\hat \sigma_i(n) = O(\textcolor{black}{\eta}),$ and $\hat b_i = O(\textcolor{black}{\eta}),$
    for the output $x_{\hat t}$ with $\hat t = \arg\min_{t\in T}\|g_t\|$  we have
    \begin{align}
    \Prob\left\{\|\nabla B_{\eta}(x_{\hat t})\|\leq \eta\right\}\geq 1-\hat \de
    \end{align}
    Therefore, given $\sigma_i(n) = \frac{\sigma_i}{\sqrt{n}}$ (\ref{eq:sigma_n}) and $\hat \sigma_i(n) = \frac{\hat \sigma_i}{\sqrt{n}}$ ( \ref{eq:hat_sigma_n}), for constant $\hat \sigma_i, \sigma_i$, we require $n = O(\frac{1}{\eta^4})$ oracle calls per iteration, and $N = O(\frac{1}{\eta^7})$ calls of the first-order stochastic oracle in total. 
    Using Lemma \ref{crit:1}, we get that $x_{\hat t}$ is an $\e$-approximate KKT point to the original problem (\ref{problem}) with $\e = \eta$.
\end{theorem}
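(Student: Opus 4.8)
The plan is to run a smoothness-based descent argument for SGD, but on the \emph{local} smoothness of $B_\eta$ rather than a global one, and to argue by contradiction against the stopping criterion $\|g_t\|\leq 3\eta/4$. First I would condition on the ``good event'' that the iterates stay away from the boundary, i.e. $\min_{i\in[m]}\underline{\alpha}_t^i\geq c\eta$ for all $t\in[T]$, which holds with probability $1-\hat\de$ by \Cref{lemma:keeping_distance} (using the MFCQ with $\rho\geq\eta$); on this event \Cref{lemma:gamma} yields the uniform lower bound $\gamma_t\geq C\eta$. Because the safe part of the step size (\Cref{lemma:adaptive_gamma}) enforces $f^i(x_{t+1})\leq f^i(x_t)/2$ for every $i$, and $\gamma_t$ only shrinks this step, the iterate $x_{t+1}$ lies in the region $Y_t$ on which \Cref{lemm:M2} guarantees $M_2(x_t)$-local smoothness; moreover the cap $\gamma_t\leq 1/\hat M_2(x_t)\leq 1/M_2(x_t)$ (valid since $\hat M_2\geq M_2$ with high probability) ensures the quadratic term is dominated.

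Given this, I would write the local-smoothness descent inequality along the step direction,
\begin{equation*}
B_\eta(x_{t+1})\leq B_\eta(x_t)-\gamma_t\la\nabla B_\eta(x_t),g_t\ra+\tfrac{M_2(x_t)}{2}\gamma_t^2\|g_t\|^2,
\end{equation*}
substitute $\nabla B_\eta(x_t)=g_t-\Delta_t$, and use $\gamma_t M_2(x_t)\leq 1$ to obtain
\begin{equation*}
B_\eta(x_{t+1})\leq B_\eta(x_t)-\tfrac{\gamma_t}{2}\|g_t\|^2+\gamma_t\la\Delta_t,g_t\ra.
\end{equation*}
The deviation $\Delta_t$ is controlled by \Cref{lemma:LBGradEstimatorVarBias}: on the keeping-distance event every term there is $O(\eta)$ once $\alpha_t^i,\bar\alpha_t^i\geq c\eta$, and under the stated noise scaling $\sigma_i(n)=O(\eta^2)$, $\hat\sigma_i(n)=O(\eta)$, $\hat b_i=O(\eta)$ one tunes the hidden constants so that $\|\Delta_t\|\leq\eta/4$ with probability $1-\de$. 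For any non-terminated step $\|g_t\|>3\eta/4$, so Cauchy--Schwarz gives $\la\Delta_t,g_t\ra\leq\tfrac{\eta}{4}\|g_t\|$ and the per-step decrease is at least $\gamma_t\|g_t\|\big(\tfrac{\|g_t\|}{2}-\tfrac{\eta}{4}\big)\geq\gamma_t\cdot\tfrac{3\eta}{4}\cdot\tfrac{\eta}{8}=\tfrac{3\gamma_t\eta^2}{32}\geq\tfrac{3C\eta^3}{32}$, using $\gamma_t\geq C\eta$.

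Summing and arguing by contradiction finishes the iteration count: if \emph{no} iterate up to $T=11\,(B_\eta(x_0)-\min_x B_\eta)/(C\eta^3)$ met the stopping criterion, the accumulated decrease $\tfrac{3C\eta^3}{32}T=\tfrac{33}{32}(B_\eta(x_0)-\min_x B_\eta)$ would push $B_\eta$ strictly below $\min_x B_\eta$, which is impossible; hence some $\hat t$ with $\|g_{\hat t}\|\leq 3\eta/4$ exists, and the triangle inequality gives $\|\nabla B_\eta(x_{\hat t})\|\leq\|g_{\hat t}\|+\|\Delta_{\hat t}\|\leq 3\eta/4+\eta/4=\eta$. \Cref{crit:1} then upgrades this to the claimed $\eta$-KKT guarantee, and a union bound over the $T$ iterations and $m$ constraints of the per-step events (each of probability $\geq 1-\de$ with $\de=\hat\de/(mT)$) keeps the total failure probability below $\hat\de$. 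The sample complexity follows by plugging $\sigma_i(n)=\sigma_i/\sqrt n=O(\eta^2)$, giving $n=O(\eta^{-4})$ measurements per iteration and $N=Tn=O(\eta^{-7})$ in total. The main obstacle I anticipate is not the descent algebra but controlling $\Delta_t$: the estimator is a ratio of sub-Gaussians, hence biased and potentially heavy-tailed, and \Cref{lemma:LBGradEstimatorVarBias} shows its deviation scales like $\eta\sigma_i(n)/(\alpha_t^i)^2$. This forces the joint use of (i) \Cref{lemma:keeping_distance} to bound $\alpha_t^i$ below by $c\eta$ and (ii) the very small value-noise requirement $\sigma_i(n)=O(\eta^2)$, which is exactly what inflates the batch size to $O(\eta^{-4})$ and drives the overall $O(\eta^{-7})$ cost; making the keeping-distance event, the step-size lower bound, the overestimate $\hat M_2\geq M_2$, and the deviation bounds all hold simultaneously under a single union bound is the delicate bookkeeping.
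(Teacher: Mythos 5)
Your proposal is correct and follows essentially the same route as the paper's proof: the same local-smoothness descent inequality with $\gamma_t M_2(x_t)\leq 1$, the same joint use of \Cref{lemma:keeping_distance}, \Cref{lemma:gamma}, and \Cref{lemma:LBGradEstimatorVarBias} to get $\gamma_t\geq C\eta$ and $\|\Delta_t\|\leq \eta/4$, the same per-step decrease of order $C\eta^3$ against the stopping criterion, and the same triangle-inequality and union-bound conclusion. The only cosmetic difference is that you phrase the iteration bound as a proof by contradiction, whereas the paper sums the per-step decreases directly; the two are equivalent.
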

\paragraph{Remark}\textit{Lower bound in the unconstrained non-safe case.} 
In a well-known model where algorithms access smooth,  non-convex functions through queries to an unbiased stochastic gradient oracle with bounded variance, \citet{arjevani2019lower} prove that in the worst case any algorithm requires at least $\e^{-4}$ queries to find an $\e$ stationary point. Although, they allow $d$ to depend on $\e$. 
Therefore, we "pay" extra $\e^{-3}$ measurements for safety. From the methodology point of view, this happens due to the non-smoothness of the log-barrier on the boundary and the fact that the noise of the barrier gradient estimator is very sensitive to how close the iterates $x_t$ are to the boundary.
\begin{proof}\\
% \textcolor{black}{Write the proof clearly.}
\textcolor{black}{\textbf{\textit{Step 1. Bounding number of iterations $T$.}}} First, let us denote 
% \begin{align}
$
    \hat\gamma_t := \gamma_t \|g_t\|.
    $
% \end{align}  
At each iteration of \textcolor{black}{\Cref{alg:lb_sgd}} with the fixed $\eta$ the value of the logarithmic barrier decreases at least by the following value:
    \begin{align}\label{eq:0.0}
      B_{\eta}(x_{t}) - B_{\eta}(x_{t+1}) 
      &
      \overset{\textcircled{\scriptsize 1}}{\geq}  \gamma_t \left\la \nabla B_{\eta}(x_t),  g_t \right\ra - \frac{1}{2} M_{2}(x_t) \gamma_t^2 \|g_t\|^2 \nonumber 
        \\
        &
        % =\gamma_t \la \zeta_t , g_t\ra + \gamma_t \left(1-\frac{ M_{2}(x_t) \gamma_t}{2} \right)\|g_t\|^2_2 
        % \\
        % &
        = \hat\gamma_t \la \Delta_t , \frac{g_t}{\|g_t\|}\ra + \hat\gamma_t \left(1-\frac{ M_{2}(x_t) \gamma_t}{2} \right)\|g_t\|\nonumber
        \\
        &
      \overset{\textcircled{\scriptsize 2}}{\geq} \frac{1}{2}\hat\gamma_t\|g_t\| - \hat\gamma_t\|\Delta_t\|.
    \end{align}
In the above, $M_2(x)$ is the local smoothness constant that we bound by (\ref{eq:M2}). The first inequality $\textcircled{\scriptsize 1}$ is due to the local smoothness of the barrier. $\textcircled{\scriptsize 2}$ is due to the fact that $\Prob\{\gamma_t\leq \frac{1}{M_2(x_t)}\}\geq 1-\de,$ given $x_t\in Int(\mathcal X)$. 
Summing up the above inequalities (\ref{eq:0.0}) for $t\in[T]$, \textcolor{black}{we obtain the second inequality below}:
    \begin{align*}
       \textcolor{black}{ T \min_{t\in [T]} \hat\gamma_t\left(\frac{1}{2}\|g_t\| - \|\Delta_t\|\right)}
        \leq \sum_{t\in[T]} \hat\gamma_t\left(\frac{1}{2}\|g_t\| - \|\Delta_t\|\right)
        &
        \leq  B_{\eta}( x_{0}) - \min_{x\in\mathcal \mathcal X} B_{\eta}(x).
    \end{align*}
    \textcolor{black}{In the above, the first inequality is due to the fact that the minimum of summands is smaller than any of the summands. Recall that we stop the algorithm as soon as  $\|g_t\|\leq 3\eta/4$, as stated in the beginning of the section in (\ref{stopping}).}
 Hence, for all $T$ iterations $t\in [T]$ with $\|g_t\| \geq 3\eta/4$ we have $\hat\gamma_t\geq 0.75 \eta\gamma_t $. Therefore, we get: 
    \begin{align}\label{eq:T_k}
        \textcolor{black}{T} 
        \textcolor{black}{\leq \frac{B^{\eta}( x_{0}) - \min_x B^{\eta}(x)}{\textcolor{black}{\min_{t\in[T]}}\{ \hat \gamma_t\left(0.5\|g_t\| - \|\Delta_t\|\right)\}}} \leq \frac{B^{\eta}( x_{0}) - \min_x B^{\eta}(x)}{0.75\eta\textcolor{black}{\min_{t\in[T]}}\{ \gamma_t\left(0.5\|g_t\| - \|\Delta_t\|\right)\}}.
    \end{align} 
   We have to obtain the lower bound on the denominator.  
     Using the result of Lemmas \ref{lemma:gamma} and \ref{lemma:keeping_distance},  for all $t\in\textcolor{black}{[T]}$ we have $\gamma_t \geq C\eta$. \\
     \textcolor{black}{\textbf{\textit{Step 2. Bounding $\|\Delta_t\|$.}}} Next, we have to upper bound $\|\Delta_t\|$ with high probability.  \textcolor{black}{Recall from Lemma \ref{lemma:LBGradEstimatorVarBias} (\ref{eq:delta_bound}):
 \begin{align*}
        \Prob\left\{\|\Delta_t\| 
        \leq  b_0 + \hat \sigma_0(n) \sqrt{\ln\frac{1}{\de}} 
        + \sum_{i=1}^m \frac{\eta}{\bar\alpha_t^i}
        \left(\hat b_i + \hat \sigma_i(n) \sqrt{\ln\frac{1}{\de}}\right) + 
        \sum_{i=1}^m L_i \frac{\eta \sigma_i(n) }{\alpha_t^i\bar\alpha_t^i} 
        % \left( 
        \sqrt{\ln\frac{1}{\de}}
        % \right)
        \right\}
        \geq 1-\de.
    \end{align*} }
Hence, we can guarantee $\Prob\{ \|\Delta_t\| \leq \frac{\eta}{4} \forall t\}\geq 1-\hat\de$ 
% $\|\Delta_t\| \leq \frac{\eta}{4} $ for all $t\in[T]$ with probability $1-\hat\de$ 
if for all $i\in[m]$ 
\begin{align}\label{eq:sigma_bounds1}
  \hat \sigma_0(n) &\leq \frac{\eta}{4\textcolor{black}{(3m + 2)}\sqrt{\ln\frac{1}{\de}}}  , 
  \hat \sigma_i(n) \leq \frac{\underline{\alpha}_t^i}{4\textcolor{black}{(3m + 2)}\sqrt{\ln\frac{1}{\de}}} , \\
  \hat b_0 & \leq \frac{\eta}{4\textcolor{black}{(3m + 2)}}, 
  \hat b_i \leq \frac{\underline{\alpha}_t^i}{4\textcolor{black}{(3m + 2)}}, 
  \sigma_i(n) \leq \frac{ (\underline{\alpha}_t^i)^2}{4\textcolor{black}{(3m + 2)}L\sqrt{\ln\frac{1}{\de}}},
\end{align}
    % By Lemma \ref{lemma:LBGradEstimatorVarBias} combined with Lemma \ref{lemma:keeping_distance}, we have $\Prob\{ \|\Delta_t\| \leq \frac{\eta}{4},\, \forall t  \}\geq 1-\de$ for 
% \begin{align}\label{eq:sigma_bounds1}
%   & \hat \sigma^0(n) \leq \frac{\eta}{4\textcolor{black}{(2m + 2)}\sqrt{\ln\frac{1}{\de}}}  , 
%   \hat \sigma^i(n) \leq \frac{\underline{\alpha}_t^i }{4\textcolor{blue}{(2m + 2)}\sqrt{\ln\frac{1}{\de}}} ,\\
%   &
%   \hat b^0 \leq \frac{\eta}{4\textcolor{blue}{(2m + 2)}},
%   \hat b^i 
% %   \leq \frac{c\eta}{4(2m + 1)} 
%   \leq 
%   \frac{ \underline{\alpha}_t^i}{4\textcolor{blue}{(2m + 2)}}, 
%  \nonumber \\
%   &
%   \sigma^i(n) \leq \frac{(\underline{\alpha}_t^i)^2}{4\textcolor{blue}{(2m + 2)}L\sqrt{\ln\frac{1}{\de}}},\nonumber
% \end{align}
 (using the Boolean inequality).  
 \textcolor{black}{  Using the lower bound on $\underline{\alpha}_t^i$ by Lemma \ref{lemma:keeping_distance}, we get that for all $i\in[m]$ we require $\hat \sigma_i(n) = O(\eta)$, $\sigma_i(n) = O(\eta^2) $, $\hat b_i = O(\eta).$\\
\textcolor{black}{\textbf{\textit{Step 3. Finalising the bounds.}}} Then, using $\|g_t\|\leq 3\eta/4$ for all $t\in[T]$, we can claim that with high probability for any $t\in[T]$  the following holds $$0.5\|g_t\| - \|\Delta_t\| \geq \frac{3}{8} \eta - \frac{1}{4} \eta = \frac{\eta}{8}.$$}  Combining it with inequality (\ref{eq:T_k}), the algorithm stops after at most $T$ iterations with
    $$T\leq 8\frac{B_{\eta}( x_{0}) - \min_x B_{\eta}(x)}{\textcolor{black}{0.75} C \eta^3} \textcolor{black}{\leq 11\frac{B_{\eta}( x_{0}) - \min_x B_{\eta}(x)}{ C \eta^3}}.$$
    Finally, using $\Prob\{ \|\Delta_t\| \leq \frac{\eta}{4} \forall t\}\geq 1-\hat\de$ and \textcolor{black}{the stopping criterion $\|g_t\| \leq 3\eta/4$} (\ref{stopping}), we obtain 
    \begin{align*}
    \Prob\left\{\|\nabla B_{\eta}( x_{\hat t})\|\textcolor{black}{\leq \|g_{\hat t}\| + \|\Delta_{\hat t}\|} \leq \eta\right\}\geq 1-\hat\de.
    \end{align*}
\end{proof}

% \clearpage
\subsection{Stochastic convex problems}\label{sec:convex}
% In this section we propose the particular algorithm for the convex problem with smooth non-linear constraints, discuss the optimality criterion, and analyse the convergence of the proposed algorithm.
%  The algorithm 
 For the convex case, we propose to use \LBM ($ x_0,\textcolor{black}{\eta},  T\textcolor{black}{, n}$) with the \emph{output:} $\bar x_T:= \frac{\sum_{t=1}^T\gamma_t x_{t}}{\sum_{t=1}^T\gamma_t}$. Next, we discuss the optimality criterion for convex problems.

\subsubsection{Optimality criterion in the convex case}\label{sec:optimality_crit_convex}
    In the convex case, we can relate an approximate solution of the log barrier problem to an $\e$-approximate solution of the original problem in terms of the objective value. 
\begin{assumption}\label{assumption:convexity} 
    The objective and the constraint functions $f^i(x) $ for all $i \in\{0,\ldots,m\}$ are convex.
\end{assumption}
 Note that Assumption \ref{assumption:3} implies non-emptiness on $Int (\mathcal X)$ which is called Slater Constraint Qualification. In the convex setting, it in turn implies the extended MFCQ:
\begin{fact}\label{fact:1}
    Let Assumptions \ref{assumption:2_diameter}, \ref{assumption:3}, and \ref{assumption:convexity} hold. Then, Assumption \ref{assumption:mfcq} holds with $s_x := \frac{x - x_0}{\|x-x_0\|}$, such that 
    $
        \la \nabla f^i(x), s_x \ra \geq \frac{\beta-\rho}{R}
    $ for all $i\in \mathcal I_{\rho}(x)$ for any $0 <\rho < \beta$ .
\end{fact}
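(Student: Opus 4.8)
The plan is to obtain the bound directly from the convexity inequality for each $\rho$-approximately active constraint, using the deep interior point $x_0$ as an anchor. First I would dispose of the degenerate case $x = x_0$: since $f^i(x_0) \leq -\beta$ by \Cref{assumption:3} and $\rho < \beta$, we have $-f^i(x_0) \geq \beta > \rho$ for every $i$, so $\mathcal I_{\rho}(x_0) = \emptyset$ and any unit vector serves as $s_{x_0}$; the claimed inequality is then vacuous. Thus I may assume $x \neq x_0$, so that $s_x = \frac{x - x_0}{\|x - x_0\|}$ is a well-defined unit vector.

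For the main case, fix $i \in \mathcal I_{\rho}(x)$. Convexity of $f^i$ (\Cref{assumption:convexity}) applied with base point $x$ and evaluation point $x_0$ gives $f^i(x_0) \geq f^i(x) + \la \nabla f^i(x), x_0 - x \ra$, i.e., $\la \nabla f^i(x), x - x_0 \ra \geq f^i(x) - f^i(x_0)$. Now I would plug in the two estimates available: $f^i(x_0) \leq -\beta$ from \Cref{assumption:3}, and $f^i(x) \geq -\rho$ from the definition $\mathcal I_{\rho}(x) = \{i : -f^i(x) \leq \rho\}$. Combining these yields $\la \nabla f^i(x), x - x_0 \ra \geq -\rho - (-\beta) = \beta - \rho > 0$, the positivity following from $\rho < \beta$.

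Finally I would normalize and invoke the diameter bound. Dividing by $\|x - x_0\|$ gives $\la \nabla f^i(x), s_x \ra \geq \frac{\beta - \rho}{\|x - x_0\|}$, and since $x, x_0 \in \mathcal X$, \Cref{assumption:2_diameter} gives $\|x - x_0\| \leq R$, so that $\la \nabla f^i(x), s_x \ra \geq \frac{\beta - \rho}{R}$. As this holds for every $i \in \mathcal I_{\rho}(x)$, it establishes the extended MFCQ (\Cref{assumption:mfcq}) with the explicit constant $l = \frac{\beta - \rho}{R} > 0$.

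I do not expect any serious obstacle: the argument is a one-line convexity estimate together with sign bookkeeping. The only points requiring care are (i) checking the degenerate case so that $s_x$ is defined and the statement is not vacuously violated, and (ii) the direction/sign convention — note that $s_x$ points from the interior anchor $x_0$ \emph{toward} the near-boundary point $x$, and it is the gradient inner product along this outward direction that is positive, matching the $\la s_x, \nabla f^i(x)\ra > l$ form of \Cref{assumption:mfcq}. Since the Fact is asserted for all $0 < \rho < \beta$, whereas \Cref{assumption:mfcq} only needs $\rho \in (0, \tfrac{\beta}{2}]$, the derived bound is in fact slightly stronger than required.
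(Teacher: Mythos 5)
Your proof is correct and takes essentially the same route as the paper's: the convexity inequality anchored at the safe point $x_0$, combined with $f^i(x)\geq -\rho$, $f^i(x_0)\leq -\beta$, and the diameter bound $\|x-x_0\|\leq R$. The only difference is your explicit handling of the degenerate case $x = x_0$ (where $\mathcal I_{\rho}(x_0)=\emptyset$ makes the claim vacuous), a minor point of rigor the paper's proof omits.
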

\begin{proof}
Indeed, for any point $x\in \mathcal X$ 
 and for any convex constraint $f^i$ such that $f^i(x)\geq - \rho$, due to convexity we  have  
    $
         f^i(x) - f^i(x_0) \leq  \la\nabla f^i(x),   x - x_0\ra.$ Given the bounded diameter of the set   $\|x_0 - x\|\leq R$, we get $ \la\nabla f^i(x),  
    s_x \ra \geq \frac{\beta - \rho}{R}$.
\end{proof}
 Then, we can relate an $\eta$-approximate solution by the log barrier value with an $\e$-approximate solution for the original problem, where $\e$ depends on $\eta$ linearly up to a logarithmic factor. We formulate that in the following lemma:
\begin{lemma}\label{lemma:connection}
    Consider problem (\ref{problem}) under Assumptions \ref{assumption:2_diameter}, \ref{assumption:1}, \ref{assumption:3}, and the convexity Assumption \ref{assumption:convexity}.
    % , \ref{assumption:mfcq}.
    Assume that $\hat x$ is an $\eta$-approximate solution to the $\eta$-log barrier approximation, that is,  
    $$ B_{\eta}(\hat x) - B_{\eta}(x^*_{\eta}) \leq \eta,$$
    where $x^*_{\eta}$ is a solution of the $\min B_{\eta}$ minimization problem, with $\eta \leq \beta/2$. Then, $\hat x$ is an $\e$-approximate solution to the original problem (\ref{problem}) with $\e = \eta(m + 1) + \eta m\log\left(\frac{2mL R\hat\beta}{\eta \beta}\right) $, that is,
    $
        f^0(\hat x) - \min_{x\in \mathcal X} f^0(x) \leq \e,
    $ where $\hat \beta>0$ is such that $\forall i\in[m]\forall x\in\mathcal X~ |f^i(x)|\leq \hat \beta.$ Since the constraints are smooth and the set $\mathcal X$ is bounded, such $\hat \beta$ exists.
    \end{lemma}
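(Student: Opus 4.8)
The plan is to run the classical interior-point / log-barrier duality argument, splitting the suboptimality of $\hat x$ into a ``central-path'' gap and an ``approximation'' gap. Writing $x^* := \arg\min_{x\in\mathcal X} f^0(x)$, I would decompose
\[
f^0(\hat x) - f^0(x^*) = \underbrace{[f^0(\hat x) - f^0(x^*_\eta)]}_{\text{approximation gap}} + \underbrace{[f^0(x^*_\eta) - f^0(x^*)]}_{\text{central-path gap}},
\]
and bound the two pieces separately. Throughout I would exploit the identity $f^0 = B_\eta + \eta\sum_{i}\log(-f^i)$ that inverts the definition of the barrier.

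For the central-path gap I would use that $B_\eta$ is convex (each $-\log(-f^i)$ is convex since $f^i$ is convex and $-\log$ is convex decreasing), so the minimizer satisfies $\nabla B_\eta(x^*_\eta)=0$, i.e. $\nabla f^0(x^*_\eta) + \sum_i \lambda^i \nabla f^i(x^*_\eta)=0$ with the \emph{self-generated} multipliers $\lambda^i := \eta/(-f^i(x^*_\eta)) > 0$. Thus $x^*_\eta$ minimizes the convex Lagrangian $\mathcal L(\cdot,\lambda)$, and weak duality gives $f^0(x^*) \ge \mathcal L(x^*_\eta,\lambda) = f^0(x^*_\eta) - \sum_i\lambda^i(-f^i(x^*_\eta)) = f^0(x^*_\eta) - m\eta$, since every complementarity product equals $\lambda^i(-f^i(x^*_\eta)) = \eta$. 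This yields $f^0(x^*_\eta) - f^0(x^*) \le m\eta$.

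The key step, and the one I expect to be the main obstacle, is controlling how close $x^*_\eta$ sits to the boundary, i.e. lower-bounding each $-f^i(x^*_\eta) = \eta/\lambda^i$, which amounts to an upper bound on the multipliers $\lambda^i$. Here Assumption \ref{assumption:3} (the strictly-feasible starting point) is essential. Since $x^*_\eta$ minimizes $\mathcal L(\cdot,\lambda)$, I would evaluate $\mathcal L(x^*_\eta,\lambda) \le \mathcal L(x_0,\lambda)$, use $-f^i(x_0) \ge \beta$, $f^0(x^*_\eta) \ge f^0(x^*)$, and $f^0(x_0)-f^0(x^*) \le LR$ (Lipschitzness plus bounded diameter) to obtain $\beta\sum_i\lambda^i \le f^0(x_0)-f^0(x^*_\eta)+m\eta \le LR + m\eta$. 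The bound $\eta \le \beta/2$ then lets me collapse $LR+m\eta$ into $2mLR$ (a margin $\beta > LR$ would make constraint $i$ redundant on all of $\mathcal X$), giving $\sum_i\lambda^i \le 2mLR/\beta$ and hence $-f^i(x^*_\eta) \ge \eta\beta/(2mLR)$ for every $i$.

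Finally, for the approximation gap I would write, via the inversion identity,
\[
f^0(\hat x) - f^0(x^*_\eta) = [B_\eta(\hat x) - B_\eta(x^*_\eta)] + \eta\sum_{i=1}^m \log\frac{-f^i(\hat x)}{-f^i(x^*_\eta)}.
\]
The first bracket is $\le \eta$ by hypothesis. For the logarithmic terms I would bound the numerator by $-f^i(\hat x) \le \hat\beta$ and the denominator by the boundary-distance bound just derived, so each summand is at most $\log\!\big(2mLR\hat\beta/(\eta\beta)\big)$; summing gives an approximation gap of at most $\eta + \eta m\log\!\big(2mLR\hat\beta/(\eta\beta)\big)$. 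Adding the central-path gap $m\eta$ produces exactly the claimed $\e = \eta(m+1) + \eta m\log\!\big(2mLR\hat\beta/(\eta\beta)\big)$. The remaining manipulations (convexity of the barrier, weak duality, the $f^0$-vs-$B_\eta$ identity) are routine; the whole argument hinges on turning the strict-feasibility margin $\beta$ into a uniform lower bound on $-f^i(x^*_\eta)$, without which the logarithmic terms blow up.
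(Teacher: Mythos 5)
Your proof is correct and arrives at exactly the claimed constant, but it reaches the crucial boundary-distance bound $-f^i(x^*_{\eta}) \geq \frac{\eta\beta}{2mLR}$ by a genuinely different route than the paper. The paper dots the stationarity identity $\nabla B_{\eta}(x^*_{\eta}) = 0$ against the direction $s_x = \frac{x^*_{\eta}-x_0}{\|x^*_{\eta}-x_0\|}$ and invokes Fact \ref{fact:1} (the convex-case instance of the extended MFCQ, Assumption \ref{assumption:mfcq}), splitting constraints into $\beta/2$-active and inactive ones and bounding each group via Lipschitz continuity; you instead bound the multipliers directly through the classical Slater-point argument, evaluating $\mathcal L(x^*_{\eta},\lambda) \leq \mathcal L(x_0,\lambda)$ at the strictly feasible $x_0$ to get $\beta\sum_{i}\lambda^i \leq LR + m\eta$. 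Your value-based argument is more elementary and even gives slightly more (a bound on $\sum_i \lambda^i$, hence on every $-f^i(x^*_{\eta})$, not just the minimum), whereas the paper's directional argument is stylistically uniform with the MFCQ machinery used in its non-convex analysis (Lemma \ref{lemma:keeping_distance}), where no duality argument is available. The one place you are terse is the collapse of $LR + m\eta$ into $2mLR$: this needs $\beta \leq LR$ (up to the constant you chose), and your parenthetical "redundant constraint" remark should be expanded into the actual argument — since $\mathcal X$ is nonempty, closed and bounded, it has a boundary point $y$ at which some constraint is active, so $\beta \leq -f^{i^*}(x_0) = f^{i^*}(y) - f^{i^*}(x_0) \leq LR$; hence $\beta > LR$ is impossible under Assumptions \ref{assumption:2_diameter}--\ref{assumption:3}, and then $\sum_i\lambda^i \leq \frac{LR+m\beta/2}{\beta} \leq \frac{2mLR}{\beta}$ follows. (The paper's proof implicitly leans on the same observation in the degenerate case where no constraint is $\beta/2$-active at $x^*_{\eta}$, so this is not a defect of your approach relative to theirs.) Your treatment of the central-path gap via weak duality and of the approximation gap via the identity $f^0 = B_{\eta} + \eta\sum_i \log(-f^i)$ coincides with the paper's.
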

    \paragraph{Proof sketch}
    %  From Fact \ref{fact:1} it follows that  $$\forall x\in \mathcal X ~\exists s_x = \frac{x-x_0}{\|x-x_0\|}\in \R^d : ~\la s_x, \nabla f^i(x)\ra \geq\frac{\beta}{2 R} ~~~ \forall i\in \mathcal I_{\beta/2}(x).$$ 
      Let $\hat x$ be an approximately optimal point for the log barrier: 
    % $B_{\eta}(\hat x) - B_{\eta}(x^*) \leq  
    $B_{\eta}(\hat x) - B_{\eta}(x^*_{\eta}) \leq \eta,$ and $x^*_{\eta}$ be an optimal point for the log barrier. Then, using the definition, we can bound:    
    % \begin{align}  
    $
        f^0(\hat x) - f^0( x^*_{\eta})  \leq  \eta + \eta\sum_{i=1}^m -\log\frac{- f^i(x^*_{\eta})}{ - f^i(\hat x)}.
    $
    % \end{align}
    Combining Fact \ref{fact:1} with the first order stationarity criterion, we can derive:
    $\min_{i\in[m]}\{-f^i(x^*_{\eta})\}  \geq \frac{\eta \beta}{2mLR}.$
    Hence, combining the above two inequalities, we get the following relation of point $\hat x$ and point $x^*_{\eta}$:
        % \begin{align}
        $f^0(\hat x) -f^0( x^*_{\eta})   \leq \eta \left(1 + m\log\left(\frac{2m LR\hat \beta}{\eta \beta}\right)\right)$ using $-f^i(\hat x)\leq\hat\beta $.
        % \end{align}
    Using the Lagrangian definition for stationarity of the optimal point of the initial problem $x^*$, we get the following relation between $x^*$ and $x^*_{\eta}$: 
       $ f^0(x^*_{\eta}) - f^0(x^*) \leq m \eta.$
   Combining  it with the above, we get the statement of the Lemma
        $$ f^0(\hat x) - \min_{x\in \mathcal X}f^0( x)   \leq \eta + \eta m\log\left(\frac{2mLR \hat\beta}{\eta  \beta}\right) + m\eta.$$
        For the full proof see Appendix \ref{Appendix:proof_lemma:connection}.
    
% % !!!!! Close to the boundary the precision has to be better, which can be achieved by taking more samples

\subsubsection{Convergence in the convex case}
As already discussed in the optimality criterion \Cref{sec:optimality_crit_convex}, for the convex problem we only require the convergence in terms of the value of the log barrier. 
Thus, we get the following convergence result for this method.
\begin{theorem}\label{thm:convex}
    \textcolor{black}{Let Assumptions \ref{assumption:2_diameter}, \ref{assumption:1}, \ref{assumption:3},  \ref{assumption:convexity} hold,} $B_{\eta}(x):= f^0(x) -\eta \sum_{i=1}^m\log(-f^i(x))$ be a  log barrier function with parameter $\eta>0$, and $x_0\in\R^d$ be the starting point. 
    Let $x^*_{\eta}$ be a minimizer of $B_{\eta}(x)$. Then, after \textcolor{black}{$T \geq \frac{\|x_0 - x_{\eta}^*\|^2}{C\eta^2} $} iterations of \LBM, and with  $\sigma_i(n) = O(\frac{\eta^{2}}{R})$, $\hat \sigma_i(n) =  O(\frac{\eta}{R}),$ and $\hat b_i  =O(\frac{\eta}{R}) ,$ for the point  $\bar x_T:= \frac{\sum_{t=1}^T\gamma_t x_{t}}{\sum_{t=1}^T\gamma_t}$
    we obtain:
     $$\Prob\left\{B_{\eta}(\bar x_{T}) - B_{\eta}(x^*_{\eta}) \leq \eta 
    %  \frac{M_2(x_T)(\|x_0 - x^*_{\eta}\|^2+V^2) L^2 }{T\mu C^2 \eta}
     \right\}\geq 1-\hat\de.
    %  \leq \frac{(\|x_0 - x^*_{\eta}\|^2+V^2) L^4 \ln T}{T\mu C^2 \eta^2} .
     $$ 
     For the noise with constant variances $\hat \sigma_i, \sigma_i$, given $\sigma_i(n) = \frac{\sigma_i}{\sqrt{n}}$ (\ref{eq:sigma_n}) and $\hat \sigma_i(n) = \frac{\hat \sigma_i}{\sqrt{n}}$ ( \ref{eq:hat_sigma_n}), we require $n = O(\frac{1}{\eta^4})$ oracle calls per iteration, and $O(\frac{1}{\eta^6})$ measurements of the first-order oracle in total. Using Lemma \ref{lemma:connection} we get  $\bar x_T$ is an $\e$-approximate solution to the original problem (\ref{problem}) with $\e = \eta(m + 1) + \eta m\log\left(\frac{2mL R\hat\beta}{\eta \beta}\right) $, that is,
    $
        f^0(\hat x) - \min_{x\in \mathcal X} f^0(x) \leq \e.
    $
\end{theorem}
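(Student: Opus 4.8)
The plan is to prove convergence in the convex case by reducing it to a standard SGD-on-smooth-function argument, where the key subtlety is that the log barrier $B_\eta$ is only \emph{locally} smooth and the gradient estimator $g_t$ is both biased and high-variance near the boundary. The foundation is already laid: by Lemma \ref{lemma:keeping_distance}, the iterates maintain $\underline\alpha_t^i \geq c\eta$ for all $t$ with high probability (using the stated bounds on $\sigma_i(n), \hat\sigma_i(n), \hat b_i$, all of order $\eta/R$ or $\eta^2/R$), which by Lemma \ref{lemma:gamma} gives a uniform lower bound $\gamma_t \geq C\eta$. These two facts together tame the boundary behavior and let the convexity argument proceed.

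First I would write the one-step descent inequality for a convex function using the adaptive step size. Since $\Prob\{\gamma_t \leq \frac{1}{\hat M_2(x_t)}\} \geq 1-\de$ (the step size is capped by $\tfrac{1}{\hat M_2(x_t)}$ by construction in \eqref{eq:gamma_long}), the local smoothness of $B_\eta$ on $Y_t$ gives the standard bound $B_\eta(x_{t+1}) \leq B_\eta(x_t) - \tfrac{\gamma_t}{2}\|g_t\|^2 + \gamma_t\langle \Delta_t, g_t\rangle$ up to the deviation term. Combining this with convexity of $B_\eta$ — namely $B_\eta(x_t) - B_\eta(x^*_\eta) \leq \langle \nabla B_\eta(x_t), x_t - x^*_\eta\rangle$ — and expanding $\|x_{t+1} - x^*_\eta\|^2 = \|x_t - \gamma_t g_t - x^*_\eta\|^2$, I would derive a recursion of the form
\begin{align*}
\gamma_t\left(B_\eta(x_t) - B_\eta(x^*_\eta)\right) \leq \tfrac{1}{2}\|x_t - x^*_\eta\|^2 - \tfrac{1}{2}\|x_{t+1} - x^*_\eta\|^2 + \gamma_t\langle\Delta_t, x^*_\eta - x_t\rangle + \tfrac{\gamma_t^2}{2}\|g_t\|^2.
\end{align*}

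Next I would telescope this over $t = 1,\dots,T$. The potential terms $\|x_t - x^*_\eta\|^2$ collapse, bounded by $\|x_0 - x^*_\eta\|^2$. By Jensen's inequality applied to the weighted average $\bar x_T = \frac{\sum_t \gamma_t x_t}{\sum_t \gamma_t}$ and convexity of $B_\eta$, the left side lower-bounds $(\sum_t \gamma_t)(B_\eta(\bar x_T) - B_\eta(x^*_\eta))$. Dividing through by $\sum_t \gamma_t \geq TC\eta$ (using Lemma \ref{lemma:gamma}) yields $B_\eta(\bar x_T) - B_\eta(x^*_\eta) \leq \frac{\|x_0 - x^*_\eta\|^2}{2TC\eta} + (\text{error terms})$. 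Choosing $T \geq \frac{\|x_0 - x^*_\eta\|^2}{C\eta^2}$ makes the first term at most $O(\eta)$, as desired.

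The main obstacle will be controlling the two error terms $\gamma_t\langle\Delta_t, x^*_\eta - x_t\rangle$ and $\tfrac{\gamma_t^2}{2}\|g_t\|^2$ and showing they are also $O(\eta \sum_t\gamma_t)$. For the deviation term, I would bound $\langle\Delta_t, x^*_\eta - x_t\rangle \leq \|\Delta_t\| R$ using Assumption \ref{assumption:2_diameter}, and invoke Lemma \ref{lemma:LBGradEstimatorVarBias} with the stated noise levels $\sigma_i(n) = O(\eta^2/R)$, $\hat\sigma_i(n), \hat b_i = O(\eta/R)$ to get $\|\Delta_t\| = O(\eta/R)$ with high probability — the division by $R$ in the noise requirements is precisely what cancels the diameter factor, yielding $\|\Delta_t\| R = O(\eta)$. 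The bias part of $\Delta_t$ is the delicate piece, since the estimator is not unbiased; I would need to handle $\E\Delta_t$ via the explicit bias bound (order $\eta L\sigma_i(n)/(\alpha_t^i)^2 + \hat b_0 + \ldots$) and use $\alpha_t^i \geq c\eta$ to control the denominators. For the $\|g_t\|^2$ term, I would bound $\gamma_t\|g_t\|^2 \leq \|g_t\|$ (since $\gamma_t \|g_t\| \leq$ const by \eqref{eq:gamma_long}) and bound $\|g_t\| \leq \|\nabla B_\eta(x_t)\| + \|\Delta_t\|$, where the barrier gradient norm is controlled by $L_0 + \eta \sum_i L_i/\alpha_t^i = O(L + mL/c)$, a constant. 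Finally, a union bound over all $T$ iterations with the substitution $\de = \hat\de/(mT)$ upgrades the per-step high-probability statements to the global $1-\hat\de$ guarantee, and Lemma \ref{lemma:connection} converts the $O(\eta)$ log-barrier optimality into the $\e$-approximate solution of the original problem.
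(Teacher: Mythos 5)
Your overall scaffolding matches the paper's: telescoping a per-step recursion, Jensen's inequality on the $\gamma_t$-weighted average, the lower bound $\gamma_t \geq C\eta$ from Lemmas \ref{lemma:keeping_distance} and \ref{lemma:gamma}, the bound $\la \Delta_t, x^*_\eta - x_t\ra \leq \|\Delta_t\|R = O(\eta)$ from Lemma \ref{lemma:LBGradEstimatorVarBias} with the $1/R$-scaled noise levels, a union bound, and Lemma \ref{lemma:connection} at the end. However, there is a genuine gap in the one-step recursion you use. You adopt the \emph{non-smooth} SGD recursion, which retains the additive error term $\frac{\gamma_t^2}{2}\|g_t\|^2$, and you propose to control it via $\gamma_t\|g_t\|^2 \leq \mathrm{const}\cdot\|g_t\| = O(1)$. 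But then the contribution of this term to the averaged bound is
\begin{align*}
\frac{\sum_{t=1}^T \frac{\gamma_t^2}{2}\|g_t\|^2}{\sum_{t=1}^T\gamma_t} \;=\; \frac{\sum_{t=1}^T \frac{\gamma_t}{2}\bigl(\gamma_t\|g_t\|^2\bigr)}{\sum_{t=1}^T\gamma_t} \;\leq\; \frac{1}{2}\max_t \bigl(\gamma_t\|g_t\|^2\bigr),
\end{align*}
which is a \emph{constant}, not $O(\eta)$: in the interior of $\mathcal X$ the step size can be as large as $\gamma_t \approx 1/M_0$ while $\|g_t\| = O(L(1+m/c))$, so $\gamma_t\|g_t\|^2 = \Theta(L^2/M_0)$ and does not shrink with $\eta$. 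Your bound therefore yields $B_\eta(\bar x_T) - B_\eta(x^*_\eta) \leq \frac{\|x_0-x^*_\eta\|^2}{2TC\eta} + O(\eta) + \Theta(1)$, a constant error floor, and the theorem does not follow. Classical SGD avoids this only because there the step size is \emph{chosen} to scale like the target accuracy; here $\gamma_t$ is adaptive and bounded below by $C\eta$ but not above by anything small.

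The paper closes exactly this hole by exploiting local smoothness in the recursion itself rather than keeping the quadratic term: combine convexity at $x_t$ with the descent inequality $B_\eta(x_{t+1}) \leq B_\eta(x_t) + \la \nabla B_\eta(x_t), x_{t+1}-x_t\ra + \frac{M_2(x_t)}{2}\|x_{t+1}-x_t\|^2$ to get a bound on $B_\eta(x_{t+1}) - B_\eta(x^*_\eta)$ involving $\la g_t, x_{t+1}-x^*_\eta\ra$ (note: the inner product is taken at the \emph{new} iterate), then apply the three-point identity $\la g_t, x_{t+1}-u\ra = \frac{\|x_t-u\|^2}{2\gamma_t} - \frac{\|x_{t+1}-u\|^2}{2\gamma_t} - \frac{\|x_{t+1}-x_t\|^2}{2\gamma_t}$. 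The quadratic term then enters with a \emph{negative} sign $-\frac{\|x_{t+1}-x_t\|^2}{2\gamma_t}$ and is cancelled by the smoothness penalty $\frac{M_2(x_t)}{2}\|x_{t+1}-x_t\|^2$ precisely because the adaptive step size enforces $\gamma_t \leq 1/\hat M_2(x_t)$; the only surviving error is $-\la\Delta_t, x_{t+1}-x^*_\eta\ra \leq \|\Delta_t\|R$, which your noise analysis already handles. Alternatively, you could try to rescue your route by feeding your own descent bound $\sum_t \frac{\gamma_t}{2}\|g_t\|^2 \lesssim B_\eta(x_0)-\min_x B_\eta(x) + \sum_t \gamma_t\|\Delta_t\|\|g_t\|$ back into the quadratic term, but this requires additional arguments relating $B_\eta(x_0)-\min_x B_\eta(x)$ to $\|x_0-x^*_\eta\|^2$ and is not what you proposed; the clean fix is the prox-style cancellation above.
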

\begin{proof}
Note the following
\begin{align*} 
    B_{\eta}(x_{t+1}) - B_{\eta}(x^*_{\eta})
    & \overset{\textcircled{\scriptsize 1}}{\leq} B_{\eta}(x_{t}) + \la \nabla B_{\eta}(x_t), x_{t+1} - x_t \ra + \frac{M_2(x_t)}{2}
    \|x_t - x_{t+1}\|^2 - B_{\eta}(x^*_{\eta})
    \\
    & \overset{\textcircled{\scriptsize 2}}{\leq}
    \la \nabla B_{\eta}(x_t), x_{t+1} - x_t \ra + \frac{M_2(x_t)}{2}
    \|x_t - x_{t+1}\|^2 +
    \la \nabla B_{\eta}(x_t), x_t - x^*_{\eta}\ra 
    \\
    % & = 
    % \la g_t - \Delta_t, x_{t+1} - x_t \ra + \frac{M_2(x_t)}{2}
    % \|x_t - x_{t+1}\|^2 + \la g_t - \Delta_t, x_t - x^*_{\eta}\ra \\
    % \end{align*}
    % \begin{align*}
    & = 
   \frac{M_2(x_t)}{2}
    \|x_t - x_{t+1}\|^2 + \la g_t, x_{t+1} - x^*_{\eta}\ra 
    - \la \Delta_t, x_{t+1} - x^*_{\eta}\ra  
    \\
    & \overset{\textcircled{\scriptsize 3}}{\leq} 
    \frac{\|x_{t} - x^*_{\eta}\|^2}{2\gamma_t} - \frac{\|x_{t+1} - x^*_{\eta}\|^2 }{2\gamma_t}
    % \\
    % &
    - \left(\frac{1}{2\gamma_t} - \frac{M_2(x_t)}{2}\right)\|x_{t+1} - x_t \|^2
     - \la \Delta_t, x_{t+1} - x^*_{\eta}\ra 
          \\
    & \overset{\textcircled{\scriptsize 4}}{\leq} 
     \frac{\|x_{t} - x^*_{\eta}\|^2}{2\gamma_t} - \frac{\|x_{t+1} - x^*_{\eta}\|^2 }{2\gamma_t}
     - \la  \Delta_t, x_{t+1} - x^*_{\eta}\ra.
\end{align*}
The first inequality $\textcircled{\scriptsize 1}$ is due to the $M_2(x_t)$-local smoothness of the log barrier, the second one $\textcircled{\scriptsize 2}$ is due to convexity. The third inequality $\textcircled{\scriptsize 3}$ uses the fact that:
$\forall u \in \R^d : ~ \la g_t, x_{t+1} - u \ra = \frac{\|x_{t} - u\|^2}{2\gamma_t} - \frac{\|x_{t+1} - u\|^2}{2\gamma_t} - \frac{\|x_{t+1} - x_{t}\|^2}{2\gamma_t}.$
And the last one $\textcircled{\scriptsize 4}$ is due to $\gamma_t \leq \frac{1}{M_2(x_t)}$.
By multiplying both sides by $\gamma_t$, we get:
\begin{align}\label{eq1}
        2 \gamma_t (B_{\eta}(x_{t+1}) - B_{\eta}(x^*_{\eta})) 
   & \leq 
         \|x_{t} - x^*_{\eta}\|^2 - \|x_{t+1} - x^*_{\eta}\|^2 
         -  2\gamma_t\la \Delta_t, x_{t+1} - x^*_{\eta}\ra.
\end{align}
Then, by summing up the above for all $t \in [T]$ we get, and using the Jensen's inequality:
\begin{align}\label{eq1}
        B_{\eta}\left(\frac{\sum_{t=1}^T\gamma_t x_{t}}{\sum\gamma_t}\right) - B_{\eta}(x^*_{\eta}) &\leq \frac{1}{\sum_{t=1}^T\gamma_t}\sum_{t=1}^T \gamma_t(B_{\eta}(x_{t}) - B_{\eta}(x^*_{\eta}))\nonumber\\ 
   & \leq 
         \frac{1}{2\sum_{t=1}^T\gamma_t}\sum_{t=1}^T\left(\|x_{t} - x^*_{\eta}\|^2- \|x_{t+1} - x^*_{\eta}\|^2 
         -  2\gamma_t \la \Delta_t, x_{t+1} - x^*_{\eta}\ra\right)\nonumber\\
    & \leq 
         \frac{\|x_{0} - x^*_{\eta}\|^2}{2\sum_{t=1}^T\gamma_t} 
         - \frac{\sum_{t=1}^T \gamma_t \la \Delta_t, x_{t+1} - x^*_{\eta}\ra}{2\sum_{t=1}^T\gamma_t}.
\end{align}
That is, we can bound the accuracy by
\begin{align}
        B_{\eta}\left(\bar x_T\right) - B_{\eta}(x^*_{\eta}) 
        % & \leq 
        % \E  \frac{R^2}{\sum_{t=1}^T\gamma_t}  + \E \frac{1}{\sum\gamma_t}\sum_{t=1}^T \left( \gamma_t^2\|\Delta_t\|^2   - 2\gamma_t \la \Delta_t, x_{t} - x^*\ra\right)\\
        & \leq 
         \frac{\|x_{0} - x^*_{\eta}\|^2}{2\sum_{t=1}^T\gamma_t}  +
        %  \frac{\sum_{t=1}^T\gamma_t^2\|\Delta_t\|^2 }{2\sum_{t=1}^T\gamma_t} 
        %  +  
         \frac{\max_t\la \Delta_t, x_{t+1} - x^*_{\eta}\ra}{2}.
        %  \leq \frac{R^2}{\sum_{t=1}^T\gamma_t}  + \frac{1}{\sum\gamma_t}\sum_{t=1}^T \left(\E \gamma_t^2\|\Delta_t\|^2  -  2\E \gamma_t \la \Delta_t, x_{t} - x^*\ra\right)
\end{align}
% ////////////////////////////////////////////////////////////
% IF $\gamma_t$ was independent on $\la \Delta_t,x_t-x^*\ra$, we could...
% ////////////////////////////////////////////////////////////
Using Lemma \ref{lemma:gamma} we can prove for $ \hat \sigma_i(n) \leq  \frac{L\alpha_t^i}{3\eta\sqrt{\ln\frac{1}{\de}}} $  that  $\gamma_t \geq C\eta.$
Recall from Lemma \ref{lemma:LBGradEstimatorVarBias} (\ref{eq:delta_bound}):
 \begin{align*}
        \Prob\left\{\|\Delta_t\| 
        \leq  b_0 + \hat \sigma_0(n) \sqrt{\ln\frac{1}{\de}} 
        + \sum_{i=1}^m \frac{\eta}{\bar\alpha_t^i}
        \left(\hat b_i + \hat \sigma_i(n) \sqrt{\ln\frac{1}{\de}}\right) + 
        \sum_{i=1}^m L_i \frac{\eta \sigma_i(n) }{\alpha_t^i\bar\alpha_t^i} 
        % \left( 
        \sqrt{\ln\frac{1}{\de}}
        % \right)
        \right\}
        \geq 1-\de.
    \end{align*} 
Hence, we can guarantee $\|\Delta_t\| \leq \frac{\eta}{R} $ for all $t\in[T]$ with probability $1-\hat\de$ if for all $i\in[m]$ 
\begin{align}\label{eq:var_bounds}
  \hat \sigma_0(n) &\leq \frac{\eta}{(3m + 2)R\sqrt{\ln\frac{1}{\de}}}  , 
  \hat \sigma_i(n) \leq \frac{\underline{\alpha}_t^i}{(3m + 2)R\sqrt{\ln\frac{1}{\de}}} , \\
  \hat b_0 &\leq \frac{\eta}{(3m + 2)R}, 
  \hat b_i \leq \frac{\underline{\alpha}_t^i}{(3m + 2)R}, 
  \sigma_i(n) \leq \frac{ (\underline{\alpha}_t^i)^2}{(3m + 2)LR\sqrt{\ln\frac{1}{\de}}}.\nonumber
\end{align}
 Using the lower bound on $\underline{\alpha}_t^i$ by Lemma \ref{lemma:keeping_distance}, we get that for all $i\in[m]$ we require $\hat \sigma_i(n) = O(\eta)$, $\sigma_i(n) = O(\eta^2) $, $\hat b_i = O(\eta).$\\
Therefore, we get for the $\bar x_T$ the following bound on the accuracy:
\begin{align}\label{eq1}
        B_{\eta}\left(\bar x_T \right) - B_{\eta}(x^*_{\eta}) & \leq 
         \frac{\|x_{0} - x^*_{\eta}\|^2 }{TC\eta} + \frac{\max_t \|\Delta_t\| R }{2}\leq  \frac{\|x_{0} - x^*_{\eta}\|^2}{TC\eta} + \frac{\eta}{2} .
\end{align}
Thus, for $T\geq \frac{\|x_{0} - x^*_{\eta}\|^2}{2C\eta^2}$ we obtain $\Prob\{ B_{\eta}\left(\bar x_T \right) - B_{\eta}(x^*_{\eta}) \leq \eta\}\geq 1-\hat\de$. 
% Taking into account the set diameter, we need: 
% $T\geq \frac{R^2}{2C\eta^2}$,
% $
%   \hat \sigma_0(n) \leq \frac{\eta}{(2m + 1)R}  , 
%   \hat \sigma_i(n) \leq \frac{\alpha_t^i}{(2m + 1)R\sqrt{\ln\frac{1}{\de}}} , 
%   \hat b_i \leq \frac{\alpha_t^i}{(2m + 1)R}, 
%   \sigma_i(n) \leq \frac{ (\alpha_t^i)^2}{(2m + 1)LR\sqrt{\ln\frac{1}{\de}}}.
% $
In order to satisfy conditions on the variance (\ref{eq:var_bounds}), we require at each iteration $n = O\left(\frac{1}{\eta^4}\right)$ measurements, and therefore $N = Tn = O(\frac{1}{\eta^6})$ measurements in total.
\end{proof}
%%%%%%%%%%
%%%%%%%%%%%%%%%%%%%%%%%%
\subsection{Strongly-convex problems}
For the strongly convex case, we make use of restarts with iteratively decreasing parameter \textcolor{black}{$\eta_k$}:\\%, until $\eta_K = \eta$ at final round $K$}: \\
\begin{minipage}{\textwidth}
% \fbox{\parbox{0.8\linewidth}{
\begin{algorithm}[H]
	\caption{
	 \LBM \, with decreasing $\eta_k$ ($\eta\in\R_+, \eta_0\in\R_+ , x_0 \in \R^d, \omega \in (0,1), \{T_k\}, \{n_k\}$)}
	\label{alg:lb_sgd_str_conv}
	\small
	\begin{algorithmic}[1]
		\STATE \emph{Input: }$M_i > 0, i \in \{0,\ldots,m\}, \eta_0\in\R_+ $, $\hat x_0 \leftarrow x_0$, $\textcolor{black}{K \leftarrow [\log_{\omega^{-1}}\frac{\eta_0}{\eta}]}$;
	    \FOR {$k = 1,\ldots,K$ } 
		    \STATE  $\hat x_{k} \leftarrow$ \LBM$(\hat x_{k-1},\textcolor{black}{\eta_{k-1}},  T_{k-1}\textcolor{black}{, n_{k-1}})$;
		    \STATE $\eta_{k}\leftarrow \omega\eta_{k-1},$ with $\omega \in (0,1)$;
        \ENDFOR
	\STATE \emph{Output:} $\hat x_K$.
	\end{algorithmic}
\end{algorithm}
\end{minipage}\\

\textcolor{black}{In the above, $\eta_0$ is the initial log barrier parameter, $\omega\in(0,1)$ is the parameter reduction rate, $K$ is the number of rounds, $ \eta = \eta_K$ is a barrier parameter at the last round,  at every round $k$ we run LB-SGD with $T_k$ iterations and $n_k$ oracle calls per iteration.}
\subsubsection{Convergence}
\begin{theorem}\label{thm:str-convex}
    \textcolor{black}{Let Assumptions \ref{assumption:2_diameter}, \ref{assumption:1}, \ref{assumption:3}, \ref{assumption:convexity} hold,} and the log barrier function with parameter $\eta$: $B_{\eta}(x):= f^0(x) -\eta \sum_{i=1}^m\log(-f^i(x))$ be $\mu$-strongly-convex. 
    % $x_0\in\R^d$ be the starting point.
    % \textcolor{black}{$\eta_0 \in\R_+ $ be the initial log barrier parameter, $\omega\in(0,1)$ be the reduction coefficient}. 
     Then, after at most $T = \frac{\|x_0 - x^*_{\eta}\|^2}{C\eta_0^2} + \sum_{k=1}^K O(\frac{1}{C\mu\eta_k}) = O\left(\frac{\log_{\textcolor{black}{\omega^{-1}}} \frac{\eta_0}{\eta}}{\mu\eta}\right)$ iterations of \LBM  \, with decreasing $\eta_k$ (\Cref{alg:lb_sgd_str_conv}), and with  
     $\sigma_i(n_k) = O(\textcolor{black}{\eta_k^{2}})
    %  \min\{\frac{\eta^{1.5}}{\|x_0 - x^*_{\eta}\|}, \})
    $, $\hat b_i = O(\textcolor{black}{\eta}),$
      $\hat \sigma_i(n_k) = O(\textcolor{black}{\eta_k}),$
    we obtain:
     $$\Prob\left\{B_{\eta}(\hat x_{K}) - \min_{x\in\mathcal X}B_{\eta}(x) \leq \eta 
     \right\}\geq 1-\hat\de 
     $$ 
    Hence, we require  $n_k = O(\frac{1}{\eta_k^4})$ measurements per iteration at round $k$, and $N = \tilde O(\frac{1}{\eta^5})$ measurements of the first-order oracle in total. Using Lemma \ref{lemma:connection}, we obtain that $\hat x_K$ is an $\e$-approximate solution to the original problem (\ref{problem}) with $\e = \eta(m + 1) + \eta m\log\left(\frac{2mL R\hat\beta}{\eta \beta}\right).$
    % , that is,
    % $
    %     f^0(\hat x) - \min_{x\in \mathcal X} f^0(x) \leq \e.
    % $
\end{theorem}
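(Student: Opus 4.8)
The plan is to analyze \Cref{alg:lb_sgd_str_conv} as a warm-started restart scheme, invoking the convex guarantee of \Cref{thm:convex} as a black-box subroutine for each round and exploiting $\mu$-strong convexity to turn value-suboptimality into a contracting initial distance. Write $\Phi(x) := \sum_{i=1}^m -\log(-f^i(x))$, so that $B_{\eta} = f^0 + \eta\Phi$. The invariant I would propagate across rounds is that at the end of round $k$ the iterate $\hat x_k$ is an $\tilde O(\eta_{k-1})$-accurate minimizer of $B_{\eta_{k-1}}$ in value. First I would treat the base round: starting from $x_0$ with parameter $\eta_0$, the initial distance is controlled by the diameter, $\|x_0 - x^*_{\eta_0}\| \leq R$ (\Cref{assumption:2_diameter}), so \Cref{thm:convex} yields $B_{\eta_0}(\hat x_1) - B_{\eta_0}(x^*_{\eta_0}) \leq \eta_0$ after $T_0 \geq \frac{R^2}{C\eta_0^2}$ iterations. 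This is the source of the first summand $\frac{\|x_0 - x^*_{\eta}\|^2}{C\eta_0^2}$ in the iteration count.

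The inductive step is the heart of the argument and is where I expect the main difficulty. Suppose round $k$ has produced $B_{\eta_{k-1}}(\hat x_k) - B_{\eta_{k-1}}(x^*_{\eta_{k-1}}) \leq \eta_{k-1}$. Before round $k+1$ (which runs with the smaller parameter $\eta_k = \omega\eta_{k-1}$ and target $x^*_{\eta_k}$), I must transfer this into a suboptimality bound for the \emph{new} barrier, $B_{\eta_k}(\hat x_k) - B_{\eta_k}(x^*_{\eta_k})$. Decomposing $B_{\eta_k} = B_{\eta_{k-1}} - (\eta_{k-1}-\eta_k)\Phi$ and using that $x^*_{\eta_{k-1}}$ minimizes $B_{\eta_{k-1}}$, I would obtain
$$B_{\eta_k}(\hat x_k) - B_{\eta_k}(x^*_{\eta_k}) \leq \eta_{k-1} + (\eta_{k-1}-\eta_k)\bigl(\Phi(x^*_{\eta_k}) - \Phi(\hat x_k)\bigr).$$
The crucial point is that, although the central-path point itself can drift by an $O(1)$ distance when $\eta$ changes by a constant factor, the penalty \emph{values} $\Phi(\hat x_k)$ and $\Phi(x^*_{\eta_k})$ are only logarithmically large. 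Indeed, \Cref{lemma:keeping_distance} guarantees $-f^i \geq c\eta$ at every iterate, while smoothness with bounded diameter gives $-f^i \leq \hat\beta$, so $|\Phi| \leq m\log\frac{\hat\beta}{c\eta}$ at both points. Hence the correction term is $O\bigl((\eta_{k-1}-\eta_k)\,m\log\frac{1}{\eta}\bigr) = \tilde O(\eta_k)$, and therefore $B_{\eta_k}(\hat x_k) - B_{\eta_k}(x^*_{\eta_k}) = \tilde O(\eta_k)$. This logarithmic correction is precisely where the $\tilde O(\cdot)$ in the statement originates, and it is the single genuinely delicate estimate in the proof.

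With the transfer in hand, $\mu$-strong convexity of $B_{\eta_k}$ converts the value gap into a warm start, $\|\hat x_k - x^*_{\eta_k}\|^2 \leq \frac{2}{\mu}\bigl(B_{\eta_k}(\hat x_k) - B_{\eta_k}(x^*_{\eta_k})\bigr) = \tilde O\bigl(\frac{\eta_k}{\mu}\bigr)$. Feeding this initial distance back into \Cref{thm:convex} for round $k+1$ shows that only $T_k \geq \frac{\|\hat x_k - x^*_{\eta_k}\|^2}{C\eta_k^2} = \tilde O\bigl(\frac{1}{C\mu\eta_k}\bigr)$ iterations are needed to restore the invariant at level $\eta_k$, which explains the summands $O(\frac{1}{C\mu\eta_k})$. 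Summing the geometric sequence $\eta_k = \omega^k\eta_0$ over the $K = \lceil\log_{\omega^{-1}}(\eta_0/\eta)\rceil$ rounds gives $T = \frac{\|x_0-x^*_{\eta}\|^2}{C\eta_0^2} + \sum_k \tilde O(\frac{1}{C\mu\eta_k}) = \tilde O(\frac{1}{\mu\eta})$, the final round dominating the sum.

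Finally I would assemble the sample complexity and the original-problem guarantee. Each round reuses the variance requirements of \Cref{thm:convex} at its own scale, i.e.\ $\sigma_i(n_k) = \tilde O(\eta_k^2)$ and $\hat\sigma_i(n_k) = \tilde O(\eta_k)$, hence $n_k = O(1/\eta_k^4)$ samples per iteration; the last round dominates the product, so $N = \sum_k T_{k-1}n_{k-1} = \tilde O\bigl(\frac{1}{\mu\eta^5}\bigr)$. All per-round high-probability events are combined through a union bound with the global choice $\de = \hat\de/(mT)$, yielding overall confidence $1-\hat\de$ (and, jointly, the safety of \Cref{thm:safety}). Applying \Cref{lemma:connection} to $\hat x_K$ then turns the final $B_{\eta}$-accuracy into $f^0(\hat x_K) - \min_{x\in\mathcal X} f^0(x) \leq \e$ with $\e = \eta(m+1) + \eta m\log\frac{2mLR\hat\beta}{\eta\beta}$, completing the proof. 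Beyond the cross-barrier value transfer highlighted above, everything else is bookkeeping on geometric series and reuse of \Cref{thm:convex}.
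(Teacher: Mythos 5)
Your proposal is correct and follows essentially the same route as the paper's proof: a warm-started restart scheme that invokes Theorem \ref{thm:convex} as a black box each round, uses $\mu$-strong convexity to convert the value gap $\eta_{k-1}$ into an initial distance $R_k^2 = O(\eta_{k-1}/\mu)$, pays an additive $m\eta_{k-1}(1-\omega)\log\frac{1}{c\eta_k}$ correction when the barrier parameter shrinks, and sums the resulting geometric series for $T$ and $N$ — exactly the paper's Steps 1--3. Two small points are worth flagging. First, your bound on the penalty term at the new barrier minimizer cannot be justified by Lemma \ref{lemma:keeping_distance}: that lemma controls $-f^i$ only along the algorithm's iterates $x_t$, whereas $x^*_{\eta_k}$ is a central-path point, for which the needed lower bound $\min_{i}\{-f^i(x^*_{\eta_k})\} \geq \frac{\eta_k\beta}{2mLR}$ follows instead from the stationarity of $x^*_{\eta_k}$ together with Fact \ref{fact:1}, as established inside the proof of Lemma \ref{lemma:connection}; the magnitude is the same logarithmic one you claim, so this is a mis-citation rather than a failure. (Incidentally, your sign-careful decomposition, which keeps the difference of penalties at $x^*_{\eta_k}$ and at the iterate, is the corrected form of the paper's Step 1: the paper's pointwise claim $B_{\eta_k}(x)\geq B_{\eta_{k-1}}(x)$ is reversed when the penalty is nonnegative, yet both derivations land on the same bound $\eta_{k-1}\bigl(1+m(1-\omega)\log\frac{1}{c\eta_k}\bigr)$.) Second, the bias requirement must be pinned to the final scale, $\hat b_i = O(\eta)$, not $O(\eta_k)$, because bias — unlike variance — is not reduced by averaging the $n_k$ samples per iteration; the paper makes this explicit in its Step 3, and your per-round "reuse at scale $\eta_k$" phrasing silently glosses over it.
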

\begin{proof}\\
\textcolor{black}{\textbf{\emph{Step 1. Important relations.}}} Let $x^*_{\eta_k}$ be the unique minimizer of $B_{\eta_k}(x)$.
We do the restarts with decreasing $\eta_{k} = \omega\eta_{k-1} $. 
From strong convexity, for any round $k>0$ we have: 
    \begin{align}\label{eq:2}
    \|\hat x_{k-1} - x^*_{\eta_k}\|^2\leq  R_k^2 := \frac{B_{\eta_k}(\hat x_{k-1}) - B_{\eta_{k}}(x^*_{\eta_k})}{\mu}  .\end{align}
% Moreover, from (\ref{eq:0.0}) and  (\ref{eq:sigma_bounds1}) with high probability $B_{\eta_k}( x_{t}) \leq B_{\eta_{k}}(x_{t-1})$ holds for any $t\in[T_k]$ if \begin{align*}
%   \hat \sigma^0(n_k) \leq \frac{\eta_k}{4(2m + 1)\sqrt{\ln\frac{1}{\de}}}  , 
%   \hat \sigma^i(n_k) \leq \frac{ c\eta_k }{4(2m + 1)\sqrt{\ln\frac{1}{\de}}} ,
%   \hat b^i \leq \frac{ c\eta_k }{4(2m + 1)},
%   \sigma^i(n_k) \leq \frac{ c^2 \eta_k^2}{4(2m + 1)L\sqrt{\ln\frac{1}{\de}}}.
% \end{align*}
% Hence, by induction, we can bound:
%     \begin{align}\label{eq:2.0}
%         \| x_{t} - x^*_{\eta_k}\|^2
%         \leq 
%         \frac{B_{\eta_k}(x_t) - B_{\eta_{k}}(x^*_{\eta_k})}{\mu} \leq 
%         \frac{B_{\eta_k}(x_{t-1}) - B_{\eta_{k}}(x^*_{\eta_k}) }{\mu}\leq \ldots \leq 
%         R_k^2.
%    \end{align}
 Note that for all
$ x\in \mathcal X$ we have $B_{\eta_k}(x) \geq B_{\eta_{k-1}}(x)$ (without loss of generality assuming $-f^i(x) \leq \hat \beta \leq 1$). Consequently, $-B_{\eta_k}(x^*_{\eta_k}) \leq -B_{\eta_{k-1}}(x^*_{\eta_{k-1}}).$
Therefore, using the definition of the log barrier, we can get:
\begin{align}\label{eq:3}
    B_{\eta_k}(\hat x_{k-1}) - B_{\eta_k}(x^*_{\eta_k}) & \leq  B_{\eta_{k-1}}(\hat x_{k-1}) - B_{\eta_{k-1}}(x^*_{\eta_{k-1}}) + (\eta_{k-1} - \eta_k) \sum_{i=1}^m -\log -f^i(\hat x_{k-1})\nonumber \\
    & \leq B_{\eta_{k-1}}(\hat x_{{k-1}}) - B_{\eta_{k-1}}(x^*_{\eta_{k-1}}) + m \eta_{k-1}( 1 - \omega)  \log \frac{1}{c\eta_k}.
\end{align}
\textcolor{black}{\textbf{\emph{Step 2. Induction over the rounds.}}}\\
\textcolor{black}{\emph{As an induction base}, we can use Theorem \ref{thm:convex} at the first round $k = 1$, which claims that after $T_0 \geq \frac{R^2}{C\eta_0}$ and $n_0 = O\left(\frac{1}{\eta_0^4}\right)$ 
we get 
    $B_{\eta_{0}}(\hat x_{0}) - B_{\eta_{0}}(x^*_{\eta_0}) \leq \eta_{0}.$ }
% therefore from \Cref{eq:3} follows 
%     $B_{\eta_{0}}(\hat x_{0}) - B_{\eta_{0}}(x^*_{\eta_1}) \leq \eta_{0} + m\eta_0(1-\omega)\log \frac{1}{c\eta_1} = \eta_0(1 + m(1-\omega)\log \frac{1}{c\eta_1}).$ 
\\
\emph{As an induction step}, we assume that for some $k>0$, we have 
$$B_{\eta_{k-1}}(\hat x_{k-1}) - B_{\eta_{k-1}}(x^*_{\eta_{k-1}}) \leq \eta_{k-1}.$$ This in turn, by \cref{eq:3}, leads to 
$B_{\eta_{k-1}}(\hat x_{k-1}) - B_{\eta_{k-1}}(x^*_{\eta_k}) \leq \eta_{k-1}(1 + m(1-\omega)\log \frac{1}{c\eta_k}).$
Combining it with inequality (\ref{eq:2}), we get:
    \begin{align}\label{eq:4}
    R_k^2 \leq \frac{\eta_{k-1}(1 + m(1-\omega)\log \frac{1}{c\eta_k})}{\mu} 
    = \frac{\omega^{-1}\eta_{k}(1 + (1 - \omega)m\log \frac{1}{c\eta_{k}})}{\mu}.
    \end{align}
% \textbf{\emph{Step 3. Getting the required accuracy for $B_{\eta_k}$ at every round $k$.}}
Then, from Theorem \ref{thm:convex}, by the end of round $k$ we get the induction statement for the next step:
    $$B_{\eta_{k}}(\hat x_k) - B_{\eta_{k}}(x^*_{\eta_{k}}) \leq \eta_{k},$$
for  
$\sigma_{i}(n_k) \leq \frac{c^2\eta^2_k}{(3m + 2)LR\sqrt{\ln\frac{1}{\de}}} $,
\textcolor{black}{$\hat b^i \leq \frac{ c\eta_k }{(3m + 2)R}$}, 
$\hat\sigma_{i}(n_k) \leq \frac{c\eta_k}{(3m + 2)R}\frac{1}{\sqrt{\ln 1/\de}},$ and $T_k = [\frac{R_k^2}{ C\eta_k^2}]$. 
% \textcolor{black}{Check inequalities $\geq/\leq$.}
Inserting the result of (\ref{eq:4}) into the bound on $T_k$, we require for $k>0$: $T_k = [\frac{\omega^{-1}+2m(\omega^{-1} - 1)\log \frac{1}{c\eta_k}}{\mu C\eta_k}] $.\\
\textcolor{black}{Thus, under the above conditions, the induction statement holds for all $k>0$, including $k = K$: $$B_{\eta}(\hat x_K) - B_{\eta}(x^*_{\eta}) \leq \eta.$$}
% $\sigma_{i}(n_k) \leq \frac{1}{\sqrt{\ln 1/\de}}\min\{\frac{\mu \eta_k^{1.5}}{2}, \sqrt{C}\eta_k, \frac{c^2\eta_k^2}{mL_i}\} $, and $\hat\sigma_{i}(n_k) \leq \frac{1}{\sqrt{\ln 1/\de}}\min\{\frac{\mu}{2},\frac{M\sqrt{C}\eta_k}{2},\frac{c\eta_k}{mL_i}\}.$ 
% $\sigma_{i}(n_k) \leq \frac{c^2\eta_k^2}{2(2m+1)L\sqrt{\ln 1/\de}}\min\{\frac{1}{R}, \frac{1}{2}\} $, \textcolor{blue}{$\hat b^i \leq \frac{c\eta_k}{2(2m + 1)}\min\{\frac{1}{R},\frac{1}{2}\},$} and $\hat\sigma_{i}(n_k) \leq \frac{c\eta_k}{2(2m+1)\sqrt{\ln 1/\de}}\min\{\frac{1}{R},\frac{1}{2}\}.$ 
\\
\textcolor{black}{\textbf{\emph{Step 3. Finalising the bounds.}}
Recall that $\sigma^i(n_k) = \frac{\sigma^i}{\sqrt{n_k}}, \hat \sigma^i(n_k) = \frac{\hat \sigma^i}{\sqrt{n_k}}.$
Then, we get that the required amount of measurements per iteration in order to get the required bounds on $\sigma^i(n_k), \hat \sigma^i(n_k)$ is 
$$n_k = \frac{4R^2(3m+2)^2}{\ln 1/\de}\max\left\{\frac{(\hat\sigma^i)^2}{c^2\eta_k^2}, \frac{(\sigma^i)^2L^2}{c^4\eta_k^4}\right\} = O\left(\frac{1}{\eta_k^4}\right) .$$
We require the bias to be \textcolor{black}{$\hat b^i \leq \frac{c\eta}{(3m + 2)R}$} since $\eta \leq \eta_k$ for all $k\geq 0$.} \\
Then, we need the following total number of iterations 
\begin{align*}
    T = T_0 + \sum_{k=1}^K T_k &= \frac{R^2}{C\eta_0^2} + \sum_{k=1}^K \frac{\omega^{-1}+m(\omega^{-1} - 1)\log \frac{1}{c\eta_k}}{\mu C\eta_k} \\
    & 
    % \leq \frac{R^2}{C\eta_0^2} + K \frac{\omega^{-1}+m\log \frac{1}{c\eta}}{C\eta} 
    \leq \frac{R^2}{C\eta_0^2} + \frac{\omega^{-1}+m(\omega^{-1} - 1)\log \frac{1}{c\eta}}{\mu C\eta} \log\textcolor{black}{_{\omega^{-1}}} \frac{\eta_0}{\eta} 
    % \\
    % &
    = \tilde O\left(\frac{1}{\mu\eta}\right).
\end{align*}
In total, we require $N$ measurements bounded as follows:
\begin{align*}
    \textcolor{black}{N} & = T_0 n_0 + \sum_{k=1}^K T_k n_k \leq \frac{R^2}{C\eta_0^6} +  \sum_{k=1}^K \frac{\omega^{-1}+m(\omega^{-1} - 1)\log \frac{1}{c\eta_k}}{\mu C\eta_k} O\left(\frac{1}{\eta_k^4}\right) \\
    & \leq \frac{R^2}{ C\eta_0^6} + \frac{\omega^{-1}+m(\omega^{-1} - 1)\log \frac{1}{c\eta}}{\mu C\eta^5} \log\textcolor{black}{_{\omega^{-1}}}  \frac{\eta_0}{\eta} = \tilde O\left(\frac{1}{\mu \eta^5}\right).
\end{align*}
\end{proof}
% \clearpage

\subsection{Black-box optimization}\label{section:zeroth-order}
A special case of stochastic optimization is zeroth-order optimization, in which one can access only the value measurements of $f^i$. In many applications, for example in physical systems with measurements collected by noisy sensors, we only have access to noisy evaluations of the functions. 
\subsubsection{Stochastic zeroth-order oracle
% $F^i:D\times \R\rightarrow \R$
}
 Formally we assume access to a \emph{one-point stochastic zeroth-order oracle}, defined as follows. For any $i \in \{0,\ldots,m\}$ this oracle provides noisy function evaluations at the requested point $x_j$: 
    $
        F^i(x_j, \xi^i_j) = f^i(x_j) + \xi^i_j, 
    $
where $\xi^i_j$ is a zero-mean $\sigma_i$-sub-Gaussian noise.
We assume that noise values $\xi^i_j$ may differ over iterations $j$ and indices $i$ even for the close points, i.e., we cannot access the evaluations of $f^i$ with the same noise by two different queries: $\xi^i_j \neq \xi^i_{j+1}$ for any $F^i(x_j, \xi^i_j)$ and $ F^i(x_{j+1}, \xi^i_{j+1})$ even if $x_j = x_{j+1}.$ Also, we assume that the noise
vectors $\xi_j$ of the
measurements taken around the same point 
are i.i.d.~random variables.

\subsubsection{Zeroth-order gradient estimator
% $G^i_{\nu,n}:\R \rightarrow \R^d $
}
\label{sec:oracle} 
One way to tackle  zeroth-order optimization is to sample a random point $x_t + \nu s_t$ around $x_t$ at iteration $t$, and approximate the stochastic gradient $G^i(x,\xi)$ using finite differences. A classical choice of the sampling distribution is the Gaussian distribution, referred to as Gaussian sampling. However, since the Gaussian distribution has infinite support, one has an additional risk of sampling a point in the unsafe region arbitrarily far from the point, which is inappropriate for \emph{safe learning.} 
Therefore, we propose to use the uniform distribution $\mathcal U(\mathbb S^d)$ on the unit sphere for sampling. In particular, in the case where we only have access to a noisy zeroth-order oracle, we estimate the gradient in the following way. 

% \looseness=-1 Our further approach in addressing the constrained optimization (\ref{problem}) is to use a gradient descent based algorithm applied to a log barrier approximation of (\ref{problem}). 
We need to estimate the descent directions of $f^i$ using the zeroth-order information. 
% In the case of non-smooth functions, we can
% only zeroth-order information is available, we can 
% build the stochastic gradient oracle of the smoothed approximation of $f^i$ using the finite difference and sampling of points on the sphere around the current point $x_t$. Our key 
% algorithmic technique is to 
% employ a zeroth-order gradient estimator of the objective and constraints. 
For any point $x$, we can estimate the gradient of the function $\nabla f^i$ by sampling  directions $s_{j}$ uniformly at random on the unit sphere $ s_{j} \sim \mathcal U(\mathbb S^d)$, and using the finite difference as follows:
    \begin{align}\label{eq:gradient_estimator}
	    &G^i_{\nu,n}(x,\xi) := \frac{d}{n}\sum_{j = 1}
	    ^{n}
	    \frac{F^i(x+ \nu s_{j}, \xi_{j}^{i+}) - F^i(x, \xi_{j}^{i-})}{\nu} s_{j},
	\end{align}
where 
$\xi^{i\pm}_{j}$ are sampled from $\sigma_i$-sub-Gaussian distribution. Note that $s_j$ also satisfy the sub-Gaussian condition. 
\footnote{There is also an option of using the one-point estimator $G^i_{\nu,n}(x, \xi) := \frac{d}{n}\sum_{j = 1}^{n}\frac{F^i(x+ \nu s_{j}, \xi_{j}^{i+}) }{\nu} s_{j}$, but the variance of this estimator might be much higher. 
Note that even with zero-noise its variance grows to infinity while $\nu\rightarrow 0$.  
Its variance would depend on $\frac{\max_{x\in{D}} |f^i(x)|}{\nu}$, while the two-point estimator's variance depends on the Lipschitz constant $L_i$, which might be significantly smaller. 
Also, in the case of differentiable $f^i$ with small noise $\xi$ the two-point estimator becomes a finite difference directional derivative estimator with the accuracy dependent on $\nu$ only, in contrast to the one-point estimator.  
% If one knows a~priori that the noise level is high and that $\max_{x\in{D}} |f^i(x)|$ is significantly smaller than $L_i$, then exploiting the one-point estimator might also make sense.
} 

There are also several other ways to sample directions to estimate the gradient from finite-differences.
\citet{Berahas_2021} compared various zeroth-order gradient approximation methods and showed that their sample complexity has a similar dependence on the dimensionality $d$ required for a precise gradient approximation. 
Deterministic coordinate sampling requires fewer samples due to smaller constants. 
However, we stick with sampling on the sphere because deterministic coordinate sampling requires the number of samples to be divisible by $d$. We want to keep flexibility on how many samples we can take per iteration; this number might be provided by the application. However, we note that any other sampling procedure can also be used. 

Then, the estimator $G^i_{\nu,n}(x,\xi)$ defined above is a biased estimator of the gradient $\nabla f^i(x)$  and an unbiased estimator of the smoothed function gradient $ \nabla f^i_{\nu}(x)$. The smoothed approximation $ f^i_{\nu}$ of each function $f^i$ is defined 
 as follows:
\begin{definition}\label{def:smoothed}
    The $\nu$-smoothed approximation of the function $f(x)$ is defined by
    % \begin{align}
    $f_{\nu}(x):= \E_{b}f(x + \nu b),$ 
    % \end{align} 
    where  $b$ is uniformly distributed in the unit ball $\mathbb B^d$, and  $\nu\geq 0$ is the sampling radius.
\end{definition}
    \begin{algorithm}[H]
        \caption{Zeroth-order gradient-value estimator $(F^i_n(x, \xi), G^i_{\nu,n}(x, \xi))$}
        \label{alg:oracle}
    	\small
    	\begin{algorithmic}[1]
    		\STATE {%\bfseries 
    		\emph{Input:}} 
    	 $F^i(\cdot,\xi),i \in\{0,\ldots,m\}$, $x \in \mathcal X$, 
    		$\nu>0$,
    		$n\in\mathbb N$;
            \STATE Sample $n$ directions $s_j \sim \mathcal U(\mathbb S^d)$, sample $F^i(x
            + \nu s_{j}, \xi_{j}^{i+})$ and $ F^i(x, \xi_{j}^{i-})$,  $j \in [n]$;
            \STATE \emph{Output:}   
            \begin{align*}
             F^i_n(x,\xi) &:= \frac{\sum_{j=1}^n F^i(x,\xi_{j}^{i-})}{n} \\ 
	        G^i_{\nu,n}(x, \xi) &:= \frac{d}{n}\sum_{j = 1}^{n}
	        \frac{F^i(x+ \nu s_{j}, \xi_{j}^{i+}) - F^i(x, \xi_{j}^{i-})}{\nu} s_{j}
	    \end{align*}
        % 	$G^i_{\nu}(x, s) := \frac{1}{d}\sum_{i=1}^n d \frac{\bar F^i(x+ \nu s,\bar \xi)- \bar F^i(x,\bar \xi)}{\nu}s$
        \end{algorithmic}
    \end{algorithm}
% (see \Cref{def:smoothed}):

\begin{lemma} \label{lemma:G}
    Let $f^i_{\nu}(x)$ be the $\nu$-smoothed approximation of $f^i(x).$ Then $\E G^i_{\nu,n}(x,\xi) = \nabla f^i_{\nu}(x)$, where the expectation is taken over both $s_{j}$ and $\xi^{i\pm}_{j}$ for all $j\in [n]$. 
\end{lemma}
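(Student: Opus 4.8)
The plan is to first reduce the $n$-sample average to a one-sample computation, then integrate out the oracle noise, and finally invoke the divergence theorem to identify the surviving surface integral with $\nabla f^i_\nu$.

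\textbf{Step 1 (reduction and noise averaging).} Since the pairs $(s_j,\xi_j^{i\pm})$ are i.i.d.\ across $j$, linearity of expectation collapses the average, so that
$$\E G^i_{\nu,n}(x,\xi) = \frac{d}{\nu}\,\E_{s,\xi}\!\left[\big(F^i(x+\nu s,\xi^{i+}) - F^i(x,\xi^{i-})\big)\,s\right],$$
with $s\sim\mathcal U(\mathbb S^d)$. I would take the expectation over the noise first: by unbiasedness of the oracle, $\E_\xi F^i(x+\nu s,\xi^{i+}) = f^i(x+\nu s)$ and $\E_\xi F^i(x,\xi^{i-}) = f^i(x)$, and since $s$ is independent of the noise,
$$\E G^i_{\nu,n}(x,\xi) = \frac{d}{\nu}\,\E_{s}\!\left[\big(f^i(x+\nu s) - f^i(x)\big)s\right] = \frac{d}{\nu}\,\E_{s}\!\left[f^i(x+\nu s)\,s\right],$$
where the term $f^i(x)\,\E_s[s]$ vanishes because $\E_s[s]=0$ by the central symmetry of the uniform distribution on the sphere.

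\textbf{Step 2 (the crux: divergence theorem).} It then remains to show that $\frac{d}{\nu}\E_{s\sim\mathcal U(\mathbb S^d)}[f^i(x+\nu s)\,s] = \nabla f^i_\nu(x)$. Writing the smoothed function of Definition~\ref{def:smoothed} as a volume average, $f^i_\nu(x) = \frac{1}{\mathrm{Vol}(\mathbb B^d)}\int_{\mathbb B^d} f^i(x+\nu b)\,db$, I would differentiate under the integral sign (justified by the $M$-smoothness of $f^i$) and rewrite each partial derivative as $\partial_k f^i(x+\nu b) = \nu^{-1}\partial_{b_k}\big[f^i(x+\nu b)\big]$. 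Applying the divergence theorem to the vector field $b\mapsto f^i(x+\nu b)\,e_k$ over $\mathbb B^d$, whose outward unit normal on $\mathbb S^d$ is the position vector $b$, converts the volume integral into a boundary integral, giving
$$\nabla f^i_\nu(x) = \frac{1}{\mathrm{Vol}(\mathbb B^d)\,\nu}\int_{\mathbb S^d} f^i(x+\nu s)\,s\,dS = \frac{\mathrm{Area}(\mathbb S^d)}{\mathrm{Vol}(\mathbb B^d)\,\nu}\,\E_{s}\!\left[f^i(x+\nu s)\,s\right].$$
Finally I would use the elementary identity $\mathrm{Area}(\mathbb S^d) = d\,\mathrm{Vol}(\mathbb B^d)$ for the unit sphere and ball in $\R^d$, which turns the prefactor into exactly $d/\nu$ and matches Step~1, establishing $\E G^i_{\nu,n}(x,\xi) = \nabla f^i_\nu(x)$.

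The step I expect to demand the most care is the divergence-theorem identity in Step~2: one must correctly reconcile the fact that the estimator \emph{samples on the sphere} $\mathbb S^d$ while the smoothing \emph{averages over the ball} $\mathbb B^d$, and track the normalizing constants (surface area versus volume) so that the $d/\nu$ factor emerges precisely. The reduction and symmetry arguments of Step~1 are routine by comparison, and the only regularity input needed is the $C^1$ property guaranteed by \Cref{assumption:1}, which legitimizes differentiating under the integral.
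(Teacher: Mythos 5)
Your proposal is correct and follows essentially the same route as the paper: decompose the expectation into the noiseless finite-difference term and the noise term, eliminate the latter because the noise is zero-mean and independent of $s_j$, and identify the former with $\nabla f^i_{\nu}(x)$ via the divergence/Stokes' theorem. The only difference is that the paper delegates that last identity to the classical result of \citet{flaxman2005online}, whereas you carry it out explicitly (correctly, including the normalization $\mathrm{Area}(\mathbb S^d) = d\,\mathrm{Vol}(\mathbb B^d)$ that produces the factor $d/\nu$).
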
 
\begin{proof}
 First note that $\E G^i_{\nu,n}(x,\xi) = \underbrace{\E \frac{d}{n}\sum_{j = 1}^{n}
	    \frac{f^i(x+ \nu s_{j})  - f^i(x)}{\nu} s_{j}}_{(1)} +  \underbrace{\E \frac{d}{n}\sum_{j = 1}^{n}\frac{  \xi_{j}^{i+} -\xi_{j}^{i-}}{\nu}s_j}_{(2)}$.
	    Recall that $\xi_{j}^{i\pm}$ are independent on $s_j$ and zero-mean, hence $(2) = 0$. The proof that $(1) = \nabla f^i_{\nu}(x)$ is classical \citep{flaxman2005online}
% , 
% nesterov2017random, 
% balasubramanian2018zeroth
 and is based on  Stokes' theorem.
\end{proof}

The following lemma shows  important properties of the above zeroth-order gradient-value estimators.
    \begin{lemma}\label{lemma:zero-order-bias-var}
        Let $F^i(x,\xi)$ have variance $\sigma_i>0$ and let the estimator $G^i_{\nu,n}(x,\xi)$ be defined as in (\ref{eq:gradient_estimator}) by sampling $s_j$ uniformly from the unit sphere $\mathcal U(\mathbb S^d)$, then $f^i_{\nu}(x)$, and $G^i_{\nu,n}(x,\xi)$ are biased approximations of $f^i(x)$ and $\nabla f^i(x)$ respectively, such that 
        $$ |f^i(x) - f^i_{\nu}(x)| \leq  \nu^2 M_i,$$ 
        the variance of $F^i_n(x,\xi)$ is 
        $\sigma_i(n) = \frac{\sigma_i}{\sqrt{n}}$
        and the bias of $G^i_{\nu,n}(x,\xi)$ is bounded by: 
        \begin{align}\label{eq:bias}
        \hat b_i := \|\nabla f^i(x) - \nabla f^i_{\nu}(x)\| \leq  \nu M_i, ~\forall i\in \{0,\ldots,m\}.
        \end{align}
        The variance of $G_{\nu,n}^i(x,\xi)$ is bounded as follows:
        % \begin{align}
        % \hat \sigma_i^2(n) := \E \|G^i_{\nu,n}(x) - \nabla f^i_{\nu}(x)\|^2 
        %  \leq   \frac{\frac{108d}{d+2}\|\nabla f^i(x)\|^2 +  d M_i^2 \nu^4 }{36n} + 2 d^2 \frac{\sigma^2_i}{\nu^2 n }
        % %  + d\|\nabla f^i(x)\|^2 \sim \frac{d^2\sigma^2}{\eta^2},
        % ~\forall i\in \{0,\ldots,m\}.
        % \end{align}
        \begin{align}\label{eq:variance}
        \hat \sigma_i^2(n) := \E \|G^i_{\nu,n}(x,\xi) - \nabla f^i_{\nu}(x)\|^2 
         \leq   
         \frac{3}{n}\left(d\|\nabla f^i(x)\|^2 +  \frac{d^2 M_i^2 \nu^2}{4}\right) + 4\frac{ d^2}{n} \frac{\sigma^2_i}{\nu^2 }
        %  + d\|\nabla f^i(x)\|^2 \sim \frac{d^2\sigma^2}{\eta^2},
        ~\forall i\in \{0,\ldots,m\}.
        \end{align}
    \end{lemma}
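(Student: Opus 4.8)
The statement bundles four claims; the plan is to dispatch the three first-moment/scalar claims quickly and then concentrate on the gradient-variance bound, which is the only nontrivial part. For $|f^i(x) - f^i_{\nu}(x)|$ I would start from the two-sided consequence of $M_i$-smoothness, $|f^i(x+\nu b) - f^i(x) - \langle \nabla f^i(x), \nu b\rangle| \leq \frac{M_i}{2}\nu^2\|b\|^2$, take expectation over $b$ uniform in the unit ball, and use $\E_b b = 0$ together with $\E_b\|b\|^2 \leq 1$ to obtain $|f^i_{\nu}(x) - f^i(x)| \leq \frac{M_i\nu^2}{2} \leq \nu^2 M_i$. For the gradient bias I would differentiate under the integral to write $\nabla f^i_{\nu}(x) = \E_b \nabla f^i(x+\nu b)$, then apply Jensen and the $M_i$-Lipschitzness of $\nabla f^i$ to get $\|\nabla f^i_{\nu}(x) - \nabla f^i(x)\| \leq M_i\nu\,\E_b\|b\| \leq M_i\nu$, which is exactly $\hat b_i$. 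The variance of $F^i_n$ is immediate: it is the average of $n$ i.i.d.\ samples each with variance $\sigma_i^2$, so its variance is $\sigma_i^2/n$ and $\sigma_i(n) = \sigma_i/\sqrt n$.

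\emph{Reduction for the gradient variance.} Since the $n$ summands defining $G^i_{\nu,n}$ are i.i.d.\ with common mean $\nabla f^i_{\nu}(x)$ by \Cref{lemma:G}, the variance of the average equals $1/n$ times the single-sample variance, so it suffices to bound $\E\|g - \nabla f^i_{\nu}(x)\|^2$ for one term $g := d\frac{F^i(x+\nu s,\xi^+) - F^i(x,\xi^-)}{\nu}s$. I would substitute $F^i(x+\nu s,\xi^+) = f^i(x+\nu s) + \xi^+$ and $F^i(x,\xi^-) = f^i(x) + \xi^-$ and split $g = g_1 + g_2$ into a part depending only on $s$, namely $g_1 = d\frac{f^i(x+\nu s) - f^i(x)}{\nu}s$, and a noise part $g_2 = d\frac{\xi^+ - \xi^-}{\nu}s$. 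Because $\xi^{\pm}$ are zero-mean and independent of $s$, conditioning on $s$ kills the cross term, leaving $\E\|g - \nabla f^i_{\nu}\|^2 \leq \E\|g_1\|^2 + \E\|g_2\|^2$.

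\emph{Bounding the two pieces.} For the signal part I would Taylor-expand $f^i(x+\nu s) - f^i(x) = \nu\langle \nabla f^i(x), s\rangle + r(s)$ with $|r(s)| \leq \frac{M_i}{2}\nu^2$ by smoothness, so that $g_1 = d\langle \nabla f^i(x), s\rangle s + \frac{d}{\nu}r(s)s$; using $\|s\|=1$, an elementary $(a+b)^2$ inequality, and the sphere identity $\E_s\langle \nabla f^i(x), s\rangle^2 = \|\nabla f^i(x)\|^2/d$ (which follows from $\E[s_i s_j]=\delta_{ij}/d$) yields a bound of the form $d\|\nabla f^i(x)\|^2 + \frac{d^2 M_i^2\nu^2}{4}$ up to the stated constant. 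For the noise part, since $\|s\|=1$ and $\xi^{\pm}$ are independent of $s$, $\E\|g_2\|^2 = \frac{d^2}{\nu^2}\E(\xi^+-\xi^-)^2 \leq \frac{4 d^2\sigma_i^2}{\nu^2}$, using $(\xi^+-\xi^-)^2 \leq 2(\xi^+)^2 + 2(\xi^-)^2$ and the variance bound $\sigma_i^2$. Dividing by $n$ and collecting terms gives (\ref{eq:variance}).

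\emph{Main obstacle.} The genuinely delicate step is the gradient-variance bound, and within it the noise term $\E\|g_2\|^2 = \Theta(d^2\sigma_i^2/(n\nu^2))$: its blow-up as $\nu \to 0$ is precisely what later forces the subtle balancing of $\nu$ against the smoothing bias $\nu M_i$ and the sample count $n$. The signal term requires care only in correctly invoking the sphere second-moment identity and in controlling the Taylor remainder by the smoothness constant rather than by a potentially much larger Lipschitz constant; the remainder is bookkeeping of constants, which is why the stated bound carries looser factors ($3$ and $4$) than a tightly optimized split would give.
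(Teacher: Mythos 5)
Your proposal is correct, and its skeleton coincides with the paper's proof in \Cref{Appendix:zero-order-estimator}: the paper performs exactly your decomposition of each summand into an $s$-dependent signal term $v_j^i$ and a noise term $u_j^i = d\frac{\xi^{i+}-\xi^{i-}}{\nu}s$, kills the cross term via zero-mean noise independent of $s$, and bounds the noise contribution by $4\frac{d^2\sigma_i^2}{\nu^2 n}$ precisely as you do. The difference is in how the remaining pieces are discharged: the paper does not prove the signal-variance bound, the value bias $|f^i - f^i_\nu|\leq \nu^2 M_i$, or the gradient bias $\hat b_i \leq \nu M_i$ at all, but imports them wholesale from \citet{Berahas_2021} (their Lemma 2.10 and Equation (2.35), instantiated with disturbance $\epsilon_f = 0$), whereas you derive them from first principles: Taylor remainder plus the sphere moment identity $\E[ss^T] = I/d$ for the signal variance, and differentiation under the integral plus Jensen for the bias. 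Your route buys self-containedness and in fact slightly sharper constants --- your split gives $2d\|\nabla f^i(x)\|^2 + \tfrac{d^2M_i^2\nu^2}{2}$, which sits strictly inside the stated $3\left(d\|\nabla f^i(x)\|^2 + \tfrac{d^2M_i^2\nu^2}{4}\right)$, so the lemma follows with room to spare; the paper's route buys brevity at the cost of opacity about where the factor $3$ comes from. One shared caveat, not a gap specific to you: both your Taylor-remainder step and Berahas' lemma need the two-sided quadratic bound (equivalently, $M_i$-Lipschitzness of $\nabla f^i$), which is the standard reading of smoothness but is stronger than the one-sided inequality the paper formally states in \Cref{section:problem}.
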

    \begin{proof}
    % \citet{yousefian2010convex} 
    %     (Lemma 2) showed the first property,  \citet{hazan2016graduated}, \citet{flaxman2005online}, and \citet{nesterov2017random} \citet{larson_menickelly_wild_2019}, 
        These properties are corollaries from \citet{Berahas_2021}.
        % , who made the comparison of various zeroth-order gradient approximation methods.
        For the bias (\ref{eq:bias}) we use the result of
         Equation (2.35) \citep{Berahas_2021}, and for the variance (\ref{eq:variance}) the result of Lemma 2.10 of the same paper, in both cases  by setting the disturbance $\epsilon_f = 0$ in \citet{Berahas_2021}. The last term of the variance is coming from the additive noise. 
         We set the disturbance $\epsilon_f$ to zero for their formulation and analyze the noise separately since they consider the disturbance without any assumptions on it. In contrast, we consider the zero-mean and sub-Gaussian noise which we can use explicitly. 
         For further discussions and proof, see \Cref{Appendix:zero-order-estimator}.
        % showed the rest of the properties. 
        % \textcolor{blue}{Add the proper references.} %\citet{usmanova2020safe}
        % showed the second property, \citet{hazan2016graduated} showed the third one,  and \citet{nesterov2017random} showed the last property.
    \end{proof}

\subsubsection{Setting the sample radius $\nu$ and bounding the sample complexity} 
The parameters of the estimator defined in \Cref{alg:oracle} that we can control are $\nu$ and $n$. We want to set them in such a way that the biases $b_i$ and variances $\sigma_i,\hat\sigma_i$ satisfy requirements of Theorems \ref{thm:non-convex}, \ref{thm:convex}, \ref{thm:str-convex}. Based on them, we can bound the sample complexity of our approach for zeroth-order setting.
% Variables we can control are $\nu$ and $n$, and we aim to find their orders so that the biases and variances are satisfying requirement set in theorems.

According to Theorems \ref{thm:non-convex}, \ref{thm:convex}, \ref{thm:str-convex}, we require the bias to be bounded by $\hat b_i \leq \frac{\underline{\alpha}_t^i}{(3m+2)R}, \; \hat b_0 \leq \frac{\eta}{(3m+2)R}.$ Therefore, since $\hat b_i \leq  \nu M_i$, we need to set the sampling radius small enough $\nu \leq \min \left\{\frac{\underline{\alpha}_t^i}{2m M_i R}, \frac{\eta}{2 m M_0 R}\right\}$. 
Moreover, in order to guarantee \emph{safety} of all the measurements within the sample radius $\nu$ around the current point $f^i(x_{t} + \nu  s_t)\leq 0$ using the smoothness of each constraint
   \begin{align*}
        f^i(x_{t} + \nu s_t)
        & \leq f^i(x_{t}) - \nu \la \nabla f^i(x_t),s_t \ra + \nu^2\frac{M_i}{2}\|s_t\|^2,
    \end{align*}
we require the sample radius to be $\nu\leq \frac{\underline{\alpha}_t^i}{2 \|\nabla f^i(x_t)\| + \sqrt{\underline{\alpha}_t^i M_i}}$. This bound can be obtained using the same derivations as for the adaptive step size $\gamma_t$ (Lemma \ref{lemma:adaptive_gamma}). Hence, we set
$$\nu = \min\left\{ \frac{\underline{\alpha}_t^i}{2 \|\nabla f^i(x_t)\| + \sqrt{\underline{\alpha}_t^i M_i}}, \frac{\underline{\alpha}_t^i}{2mM_iR}, \frac{\eta}{2mM_0} \right\} = O(\eta)=\Omega(\eta).$$ % (or $\nu \leq \frac{\underline{\alpha}_t^i}{\hat \theta_t^i + \sqrt{\underline{\alpha}_t^i M_i}}$). 
Thus, from the above Lemma \ref{lemma:zero-order-bias-var} Equation \ref{eq:variance}, the variance of the estimated gradient with $\nu = O(\e) = \Omega(\e)$ is 
\begin{align}\label{eq:hatsigma}
\hat \sigma_i^2(n) =
% O(\frac{d \sigma_i}{\eta \sqrt{n}}) =
\frac{1}{n}O\left(\max\left\{\frac{d^2 \sigma_i^2}{\e^2 }, L_i^2, d^2 M_i^2\e^2 \right\}\right).
\end{align}

Additionally, according to the previous Theorems \ref{thm:non-convex}, \ref{thm:convex}, \ref{thm:str-convex}, we require the variances to be $\hat \sigma_i(n) = O(\e)$ and  $\sigma_i(n) = O(\e^2)$. 
From the above Equation \ref{eq:hatsigma}, in order to have $\hat \sigma_i(n) = O(\e)$  we require $n = O\left(\max\{\frac{d^2\sigma_i^2}{\e^4}, \frac{L^2}{\e^2}, d M^2 \}\right).$ 
From the properties of the zero-mean noise, to have $\sigma_i(n) = O(\e^2)$ we require $n = O(\frac{\sigma_i^2}{\e^4}).$
Thus, we can prove the following corollary of the previously proven Theorems \ref{thm:non-convex}, \ref{thm:convex}, \ref{thm:str-convex} particularly for the zeroth-order information case:
\begin{corollary}\label{corollary:zeroth-order}
We get the following sample complexities for the zeroth-order information case, using $\nu = \min\left\{ \frac{\underline{\alpha}_t^i}{2 \|\nabla f^i(x_t)\| + \sqrt{\underline{\alpha}_t^i M_i}}, \frac{\underline{\alpha}_t^i}{2mM_iR}, \frac{\eta}{2mM_0} \right\}$ :
\begin{itemize}
    \item For the non-convex problem, \LBM \, returns $x_t$ such that is $\e$-approximate KKT point after at most $N = O(\frac{d^2 \sigma_i^2}{\e^7})$ measurements with probability $1-\hat \de$.
    \item For the convex problem, \LBM \, returns $x_t$ such that $\Prob\{f^0(x_t) - \min_{x\in\mathcal X}f^0(x) \leq \e\}\geq 1-\hat \de$ 
    % $\Prob\left\{\left[B_{\eta}(\bar x_{T}) - B_{\eta}(x^*_{\eta})\right] \leq \eta \right\}\geq 1-\de  $ 
    after at most $N = \tilde O(\frac{d^2\sigma_i^2}{\e^6})$ measurements.
    \item For the strongly-convex problem, \LBM\, returns $x_t$ such that $\Prob\{f^0(x_t) - \min_{x\in\mathcal X}f^0(x) \leq \e\}\geq 1-\hat \de$
    % $\Prob\left\{\left[B_{\eta}(\hat x_{K}) - B_{\eta}(x^*_{\eta})\right] \leq \eta 
    %  \right\}\geq 1-\de 
    %  $ 
     after at most $N = \tilde O(\frac{d^2\sigma_i^2}{\e^5})$ measurements.
    \item Moreover, all the query points of \LBM\,  are feasible for (\ref{problem}) with probability at least $1-\hat \de$.
\end{itemize}
\end{corollary}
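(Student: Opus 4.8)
The plan is to read Corollary~\ref{corollary:zeroth-order} as a direct instantiation of Theorems~\ref{thm:non-convex},~\ref{thm:convex}, and~\ref{thm:str-convex}, replacing the generic first-order oracle by the finite-difference estimator $(F^i_n, G^i_{\nu,n})$ defined in~(\ref{eq:gradient_estimator}). Each of those theorems imposes the same requirements on the oracle: a bias $\hat b_i = O(\eta)$, a gradient-variance $\hat\sigma_i(n) = O(\eta)$, and a value-variance $\sigma_i(n) = O(\eta^2)$, with $\eta = \Theta(\e)$ up to logarithmic factors through Lemma~\ref{lemma:connection}. So I would first choose the sampling radius $\nu$ and per-iteration batch size $n$ so that Lemma~\ref{lemma:zero-order-bias-var} delivers exactly these bounds, and then read off the total sample count $N = Tn$ from the iteration bound $T$ of each theorem.

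First I would fix $\nu$. Since $\hat b_i \le \nu M_i$ by Lemma~\ref{lemma:zero-order-bias-var}, the choice $\nu = O(\eta)$ brings the bias below the threshold $\hat b_i \le \underline\alpha_t^i/((3m+2)R)$ required by the theorems, where the lower bound $\underline\alpha_t^i \ge c\eta$ comes from the keeping-distance Lemma~\ref{lemma:keeping_distance}. The same $\nu$ must simultaneously keep every perturbed evaluation point $x_t + \nu s_j$ feasible; applying the $M_i$-smoothness inequality $f^i(x_t + \nu s_j) \le f^i(x_t) + \nu\langle\nabla f^i(x_t), s_j\rangle + \nu^2 M_i/2$ exactly as in the proof of Lemma~\ref{lemma:adaptive_gamma} forces $\nu \le \underline\alpha_t^i/(2\|\nabla f^i(x_t)\| + \sqrt{\underline\alpha_t^i M_i})$, which is again $\Omega(\eta)$. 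Hence both constraints are met by $\nu = \Theta(\eta) = \Theta(\e)$.

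Next I would determine $n$. The crucial term in the variance bound~(\ref{eq:variance}) is $4 d^2 \sigma_i^2/(n\nu^2)$; with $\nu = \Theta(\e)$ this is $\Theta(d^2\sigma_i^2/(n\e^2))$, so demanding $\hat\sigma_i^2(n) = O(\e^2)$ forces $n = O(d^2\sigma_i^2/\e^4)$, which dominates the $n = O(\sigma_i^2/\e^4)$ needed for $\sigma_i(n) = O(\e^2)$. Substituting $n = O(d^2\sigma_i^2/\e^4)$ into the iteration counts yields the three stated complexities: $N = Tn = O(\e^{-3})\cdot O(d^2\sigma_i^2\e^{-4}) = O(d^2\sigma_i^2/\e^7)$ in the non-convex case, $\tilde O(d^2\sigma_i^2/\e^6)$ in the convex case (where $T = \tilde O(\e^{-2})$), and $\tilde O(d^2\sigma_i^2/\e^5)$ in the strongly-convex case (where $T = \tilde O(\e^{-1})$). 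Safety then follows from a union bound over all $mTn$ queries: choosing $\de \le \hat\de/(mTn)$ combines the iterate feasibility of Theorem~\ref{thm:safety} with the per-sample feasibility enforced by $\nu$, at the cost of only a logarithmic factor absorbed into $\tilde O$.

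The main obstacle I expect is the $1/\nu^2$ blow-up of the finite-difference variance coupled to the shrinking $\nu = \Theta(\e)$: this is exactly what inflates the batch size to $\Theta(d^2/\e^4)$ and produces the extra $d^2$ factor relative to the first-order theorems, and it must be balanced against the bias $\hat b_i = \Theta(\nu M_i)$, which grows with $\nu$. A related subtlety is that the estimator is \emph{biased}, so I would need to check that the hypotheses of Lemma~\ref{lemma:keeping_distance} still hold with $\hat b_i = \Theta(\nu M_i)$ sitting inside the allowed $O(\eta)$ budget; this closes the circular dependence in which $\nu$ is chosen using the bound $\underline\alpha_t^i \ge c\eta$ that itself relies on the very noise bounds that $\nu$ and $n$ are meant to produce.
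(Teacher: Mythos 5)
Your proposal is correct and follows essentially the same route as the paper: choose $\nu = \Theta(\eta)$ to satisfy simultaneously the bias budget $\hat b_i \leq \nu M_i$ and the per-sample feasibility condition derived via the same smoothness argument as Lemma \ref{lemma:adaptive_gamma}, let the dominant $d^2\sigma_i^2/(n\nu^2)$ term in the variance bound of Lemma \ref{lemma:zero-order-bias-var} dictate $n = O(d^2\sigma_i^2/\e^4)$, and multiply by the iteration counts of Theorems \ref{thm:non-convex}, \ref{thm:convex}, and \ref{thm:str-convex}. Your added care about the finite-difference bias fitting inside the hypotheses of Lemma \ref{lemma:keeping_distance} and the union bound over all $mTn$ queries is, if anything, slightly more explicit than the paper's own treatment.
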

% \textcolor{blue}{
% \subsubsection{Non-smooth setting}
%     Note that the technique of estimating the gradients using randomization allows to address the setting when objective and constraint functions are non-smooth. In this case, we can seek a solution of the smoothed approximation of the original problem:     
%     \begin{align} \label{problem_S}
%         &\min f^0_{\nu}(x) \tag{S}\\
%         &\text{s.t. } f^i_{\nu}(x)\leq 0, i\in[m].\nonumber
%     \end{align} 
%     However, the smoothness constant of $f^i_{\nu}$ would then be dependent on $d$ and $\nu$ as $M_{\nu}^i = \frac{2\sqrt{d}L_i}{\nu}.$ 
%     Using such a randomized smoothing technique to solve non-smooth non-convex problems is analyzed by \citet{usmanova2020safe}. }

\section{Experiments}\label{section:experiments}
% \textcolor{blue}{The experiments section is not finished yet}
In this section, we demonstrate the empirical performance of our method when optimizing synthetic functions, as well as on a complex case study in constrained reinforcement learning. 

\emph{Numerical stability.}
    First, we note that to improve numerical stability, we slightly modify the steps of our method for practical applications. 
    Recall that the log barrier gradient estimator is $ g_t\leftarrow G^0_n(x_t,\xi_t) + \eta \sum_{i = 1}^{m} \frac{G^i_n(x_t,\xi_t)}{-F^i_n(x_t,\xi_t)}.$  Due to noise, the value of $-F^i_n(x_t,\xi_t)$ might become infinitely close to zero or negative, which leads to $g_t$ blowing up or being unreliable. Therefore, 
    we denote by $\bar \alpha_t^i$ the truncated value measurements $-F^i_n(x_t,\xi_t)$ with small truncation parameter $a>0$, that is $\bar \alpha_t^i = [- F^i_n(x_t,\xi_t)]_a := \begin{cases}
    -F^i_n(x_t,\xi_t) &,\, -F^i_n(x_t,\xi_t) > a\\
    a & ,\, -F^i_n(x_t,\xi_t) \leq a
    \end{cases}$. 
    Based on the above, we use the following estimator for the first-order stochastic optimization at point $x_t$:
    $ g_t = G^0_n(x_t,\xi_t) +\eta\sum_{i=1}^m \frac{G^i_n(x_t,\xi_t)}{\bar \alpha_t^i}.$
\subsection{Safe black-box learning}
% \textcolor{blue}{I am working on adding the LineBO for the first two experiments, apart from that it is done.}
In this section, we demonstrate the performance of our method on simulations and compare it to other existing non-linear safe learning approaches.  All the experiments in this subsection were carried out on a Mac Book Pro 13 with 2.3 GHz Quad-Core Intel Core i5 CPU and with 8 GB RAM. The code corresponding to the experiments in this subsection can be found under the following link: \url{https://github.com/Ilnura/LB_SGD}.
    \subsubsection{Convex objective and constraints}
     We first compare our safe method \LBM\, with SafeOpt \citep{sui2015safe,berkenkamp2016bayesian} and LineBO \citep{kirschner2019adaptive}, on a simple synthetic example. 
     
    %  (we discuss LineBO later in Section \ref{sec:exp:linebo} in more details). 
     We consider the quadratic problem with linear constraints $\min_{x\in\R^d} \|x - x_0\|^2/4d \text{, s.t. } Ax \leq b$, where $x_0 = [2,\ldots,2]$ and 
     $A = \begin{bmatrix} I_d \\ 
                         -I_d
           \end{bmatrix}$, 
    $b = \textbf{1}/\sqrt{d}$. The optimum of this problem is on the boundary. We assume that the linearity of the constraints is unknown, hence for SafeOpt we use the Gaussian kernel. For dimensions $d=2,3,4$ we carry out the simulations with standard deviation $\sigma = 0.001$ of an additive noise  \Cref{fig:LBM_Safeopt_convex} averaged over $10$ different experiments. For $d=2$ we run SafeOpt, and for $d=3,4$ we run SafeOptSwarm, which is a heuristic making SafeOpt updates more tractable for slightly higher dimensions \citep{berkenkamp2016bayesian}.
    % https://safeopt.readthedocs.io/en/latest/safeopt.SafeOptSwarm.html 
    For SafeOpt and LineBO methods, instead of plotting  the accuracy and constraints corresponding to $x_t$, we plot the smallest accuracy and biggest constraint seen up to the step $t$ (for sake of interpretability of the plots).
     \begin{figure}
         \centering
         \includegraphics[trim=5 5 5 5,clip,width=1.\textwidth]{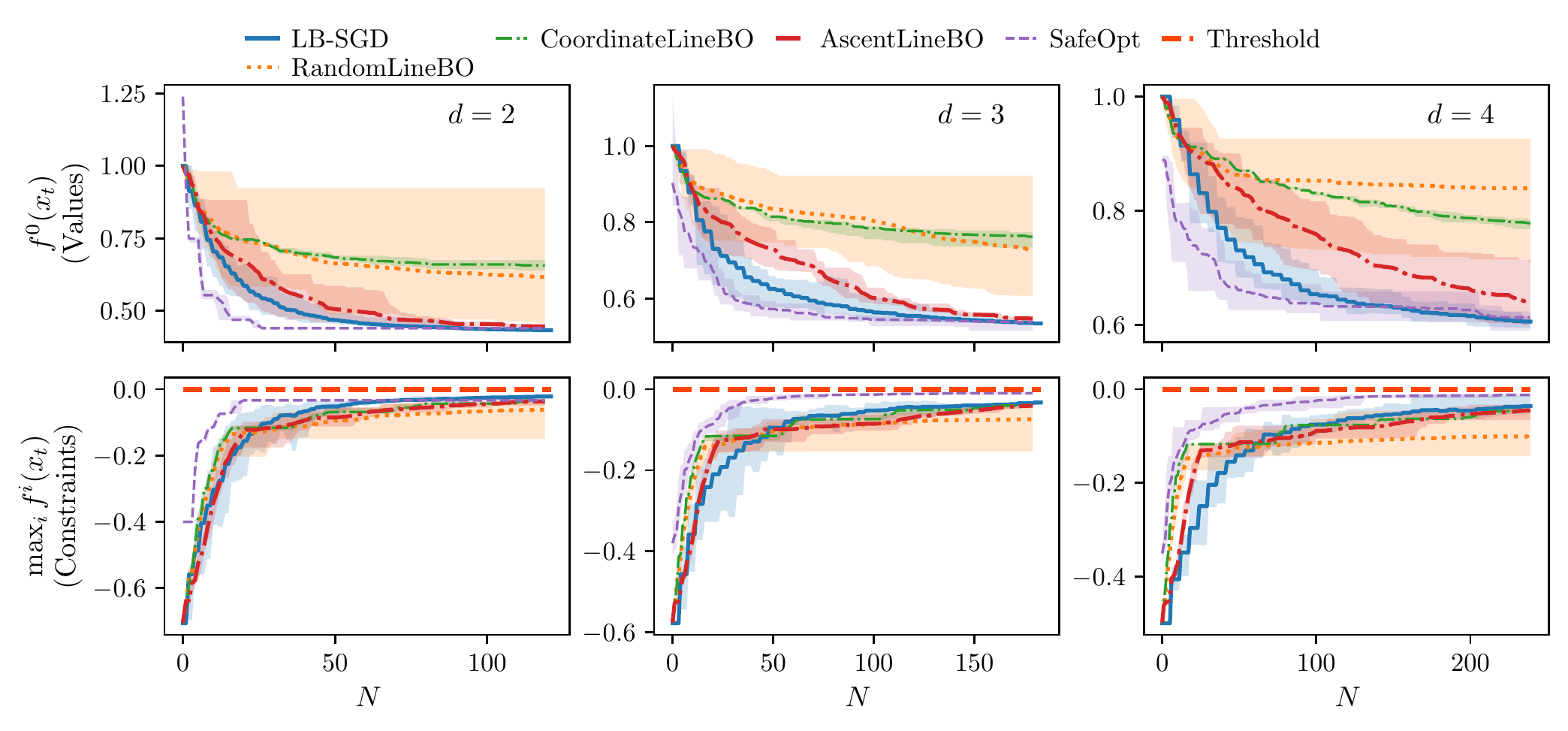}
        %  figura_LineBOQP_d4.pdf}
         \caption{Accuracy (upper plots) and constraints (lower plots) of \LBM\, and SafeOpt for $d = 2,3,4$, averaged over $10$ samples. $t$ here is the amount of zeroth-order oracle calls. In these experiments, for \LBM\, we decrease $\eta_{k+1} = 0.7 \eta_k$ gradually every $T_k = 7$ steps with $n_k = [\frac{d}{2}]$ value measurements at each step. Already for $d = 4$ \LBM\, starts outperforming all BO-based methods on this problem in terms of the sample complexity. Best viewed in color.}
         \label{fig:LBM_Safeopt_convex}
     \end{figure}
     Even for $d = 4$, \LBM\, is already notably more sample efficient compared to both SafeOpt and LineBO. Moreover, \LBM\, significantly outperforms SafeOpt over computational cost and memory usage. 
      It is well known that 
    % SafeOpt does not scale well. Hence we do not run the comparison for higher dimensionalities. 
      SafeOpt's sample complexity and computational cost can exponentially depend on the dimensionality. In contrast, the complexity of LB-SGD depends on $d$  polynomially. 
    %   However, for higher dimensions, we can compare our method with LineBO.
      The runtimes of the above experiments, in seconds, are shown in 
      Table  \ref{tab:runtimes_convex} and 
      Figure \ref{fig:runtimes_non-convex}.
      \begin{table}[H]
          \centering
        \begin{tabular}{|c|c|c|c|}
          \hline
          $d$ & 2 & 3 & 4\\
          \hline
              SafeOpt (SafeOptSwarm) &  4.289 & 114.406 & 212.514\\
           \hline
               LineBO & 8.180 & 17.837 & 40.8\\
            \hline
                LB-SGD & 0.429 &  0.895 &  0.781 \\
            \hline
          \end{tabular}
          \caption{Average runtime dependence on dimensionality $d$ (in seconds). Importantly, the wall-clock time of \LBM\ remains roughly constant as we increase $d$, noting that we only increase the number of measurements per iteration, but not the number of iterations.}
          \label{tab:runtimes_convex}
      \end{table}
    %   \begin{figure}[H]
    %      \centering 
    %     \includegraphics[width = 0.5\textwidth]{}
    %      \caption{Run-times of \LBM\, and SafeOpt for $d = 2,3,4$, averaged over $10$ samples, in seconds. $t$ here is the amount of zeroth-order oracle calls. }
    %      \label{fig:runtimes_convex}
    %  \end{figure}

\subsubsection{Non-convex objective and constraints}
    As a non-convex example, we consider the Rosenbrock function, a common benchmark for black-box optimization, with quadratic constraints. In particular, we consider the following problem 
    \begin{align*}
    & \min_{x\in\R^d} \sum_{i=1}^{d-1} 100 \|x_i - x_{i+1}\|^2 - \|1 - x_i\|^2,  \\
    & \text{s.t. } \|x\|^2 \leq r_1^2,\, \|x - \hat x\|^2 \leq r_2^2.
    \end{align*}
    We set $r_1 = 0.1$, $r_2 = 0.2$, $\hat x = [-0.05,\ldots,-0.05].$ The optimum of this problem is on the boundary of the constraint set. We show the comparison of \LBM\, and SafeOpt on
    \Cref{fig:LBM_Safeopt_non-convex}. 
        \begin{figure}[t]
         \centering 
        \includegraphics[width = 1.0\textwidth]{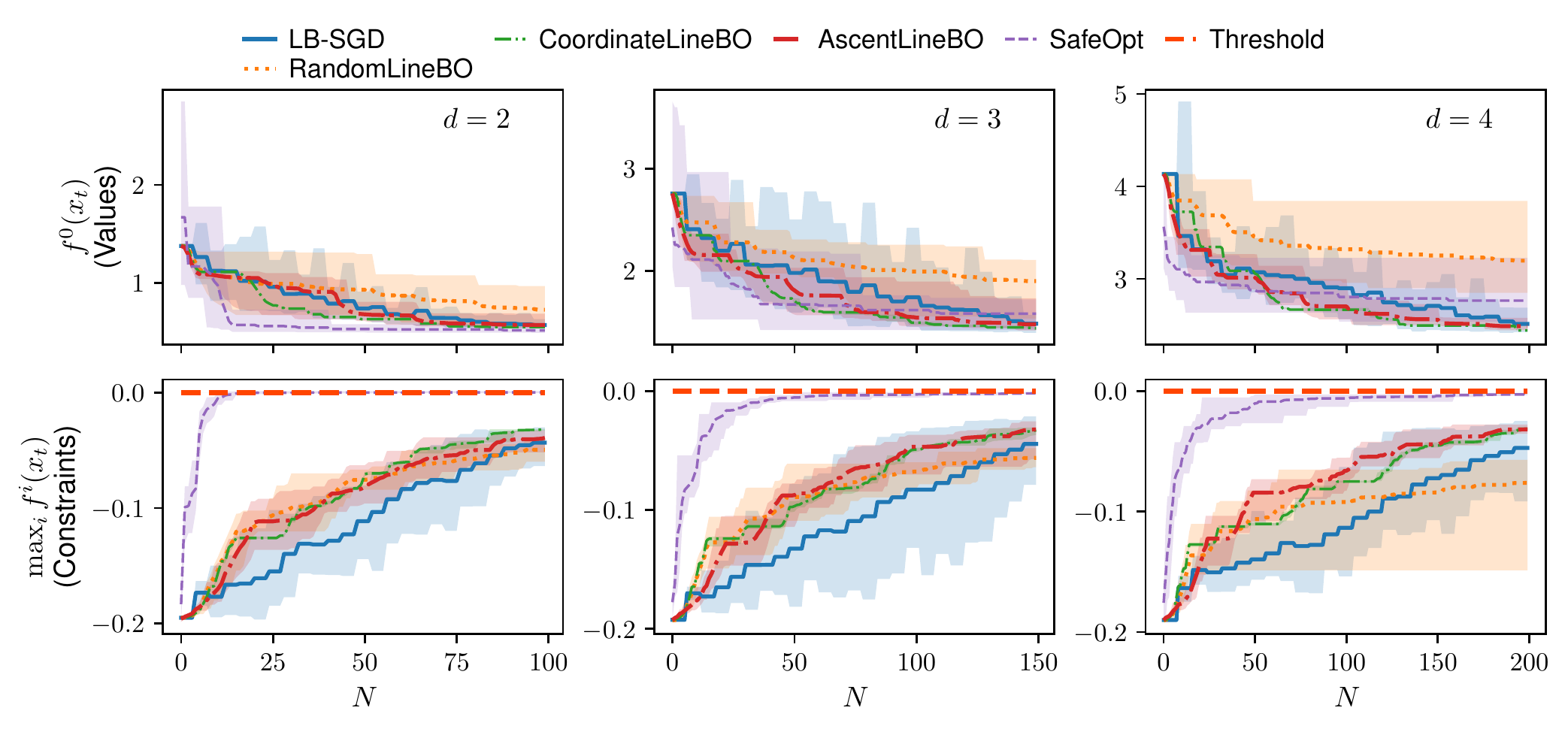}
        % figura_LineBORosenbrock_d4.pdf}
         \caption{Accuracy  and constraints  of \LBM\, and SafeOpt for $d = 2,3,4$, averaged over $10$ samples. $t$ here is the amount of zeroth-order oracle calls. In these experiments, for \LBM\, we decrease $\eta_{k+1} = 0.7 \eta_k$ gradually every $T_k = 5$ steps with $n_k = d-1$ value measurements at each step. On this problem, for $d=4$ we observe that \LBM\, performs better that SafeOpt, and comparable to LineBO. Best viewed in color.}
         \label{fig:LBM_Safeopt_non-convex}
     \end{figure}
    Again, for $d=2$ we run SafeOpt, and for $d=3,4$ we run SafeOptSwarm. Here, on the constraints plot of SafeOpt and LineBO we again plot the highest value of the constraints over all points explored so far.

     The run-times of LineBO and SafeOpt are demonstrated in 
     Table \ref{tab:runtimes_non-convex} 
     and 
     Figure \ref{fig:runtimes_non-convex}.
     \begin{table}[t]
          \centering
        \begin{tabular}{|c|c|c|c|}
          \hline
          $d$ & 2 & 3 & 4\\
          \hline
            SafeOpt (SafeOptSwarm) & 26.960 & 44.909 & 63.019
              \\
        \hline 
            LineBO & 7.584 & 10.593 & 13.293\\
        \hline
            LB-SGD & 0.294 & 0.332 & 0.324\\
        \hline
          \end{tabular}
          \caption{Run-time (in seconds) as dependent on dimensionality $d$ (Rosenbrock benchmark). }
        \label{tab:runtimes_non-convex}
      \end{table}
    \begin{figure}[t]
         \centering 
        \includegraphics[width = 0.7\textwidth]{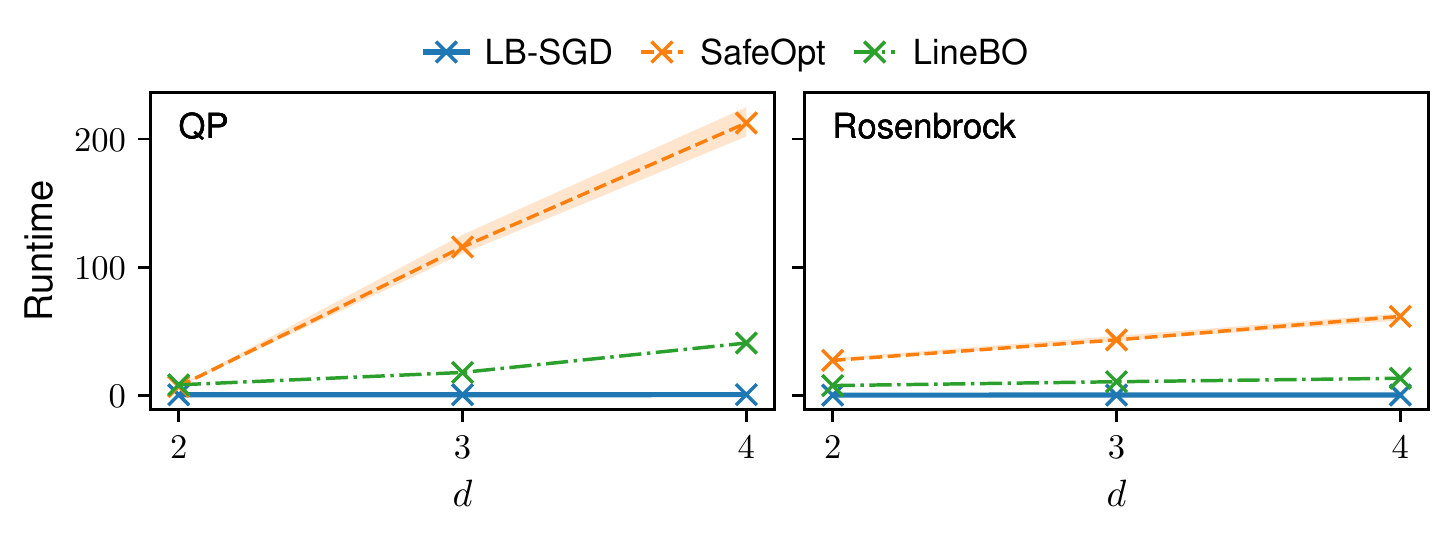}
        % figura_runtimes2Rosenbrock_d4.pdf}
         \caption{Run-times of \LBM\, and SafeOpt for $d = 2,3,4$, averaged over $10$ samples, in seconds. $t$ here is the amount of zeroth-order oracle calls. We can observe that \LBM\, is a significantly cheaper approach in terms of the computational cost compared to both BO-based methods with growing dimensions.}
         \label{fig:runtimes_non-convex}
     \end{figure}
     Note that the second problem is easier for BO methods than the first one. It is related to the fact that in the first problem, the number of constraints (and therefore, the number of GPs) is higher and grows with dimensionality ($m = 4,6,8$). In contrast, there are always only two constraints for the second problem.       
      As one can see, our approach is significantly cheaper in computational time than SafeOpt. This is, of course, at the price of finding only a local minimum, not the global one.
      
     \subsubsection{Comparison with LineBO in higher dimensions
     }\label{sec:exp:linebo}
     In higher dimensions, it is well known that SafeOpt is not tractable. Therefore, we compare our method only with LineBO \citep{kirschner2019adaptive}.
     %, designed  BO-based methods for high dimensional problems. 
     This method scales significantly better with dimensionality than the classical BO approaches. 
     The method was demonstrated to be efficient in the unconstrained case and in cases where the solution lies in the interior of the constraint set. The authors proved the theoretical convergence in the unconstrained case and the safety of the iterations in the constrained case. However, in contrast to our method, this approach has a drawback that we discuss below. 
     %Recall the main idea of LineBO. 
     At each iteration, LineBO  samples a direction (at random, an ascent direction of the objective, or a coordinate direction). 
     Then it solves a \emph{1-dimensional} constrained optimization along this direction, using SafeOpt. After optimizing along this direction, it samples another direction starting from the current point. 
     The drawback of this approach is that when the solution is on the boundary, LineBO might get stuck on the wrong point on the boundary. In such a case, it might be difficult for it to find a safe direction of improvement too close to the boundary. See Figure \ref{fig:illustraionLineBO}  for the illustration of this potential problem. Furthermore, the higher dimension, the harder it is to sample a suitable direction. 
     \begin{figure}[t]
         \centering
      \includegraphics[width = 0.7\textwidth]{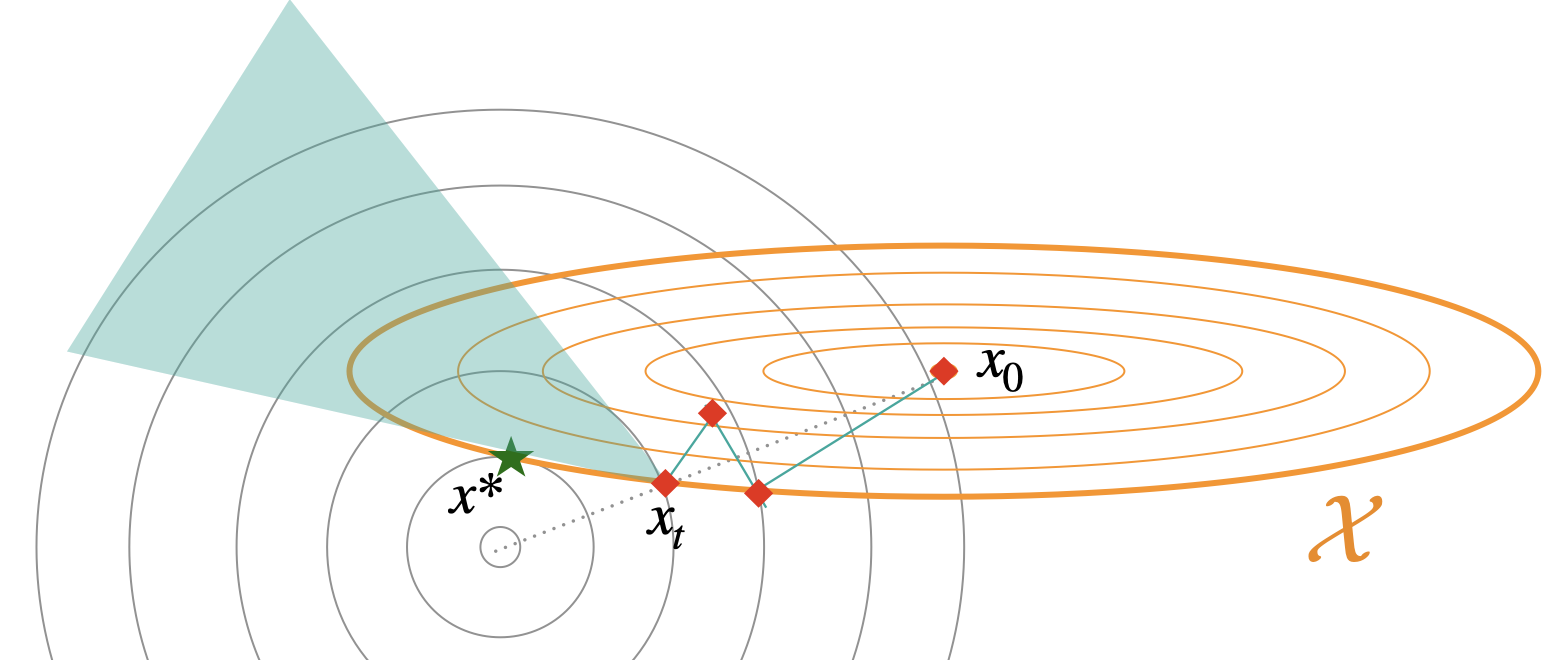}
     \caption{Illustration of the LineBO behavior. At point $x_t$ not every direction allows the safe improvement (only the directions lying in the green sector). Therefore, the LineBO method might get stuck sampling the wrong directions. On this example, the closer to the solution, the narrower is the improvement sector.}
     \label{fig:illustraionLineBO}
     \end{figure}
      We demonstrate that empirically in application to the following problem:
      \begin{align}
      &\min_{x\in\R^d} -\text{exp}^{-4\|x\|^2}, \\
      &\text{s.t. } \la x - \hat x, A (x-\hat x)\ra \leq r^2,
      \end{align}
      with $r = 0.5$ and $A = \text{diag}(3,1.2,\ldots,1.2).$ On Figure \ref{fig:LBM_LineBO} we demonstrate the comparison of LineBO and \LBM\, methods on the above problem for dimensionalities $d = 2, 10, 20$.
          \begin{figure}[t]
         \centering
        \includegraphics[width = \textwidth]{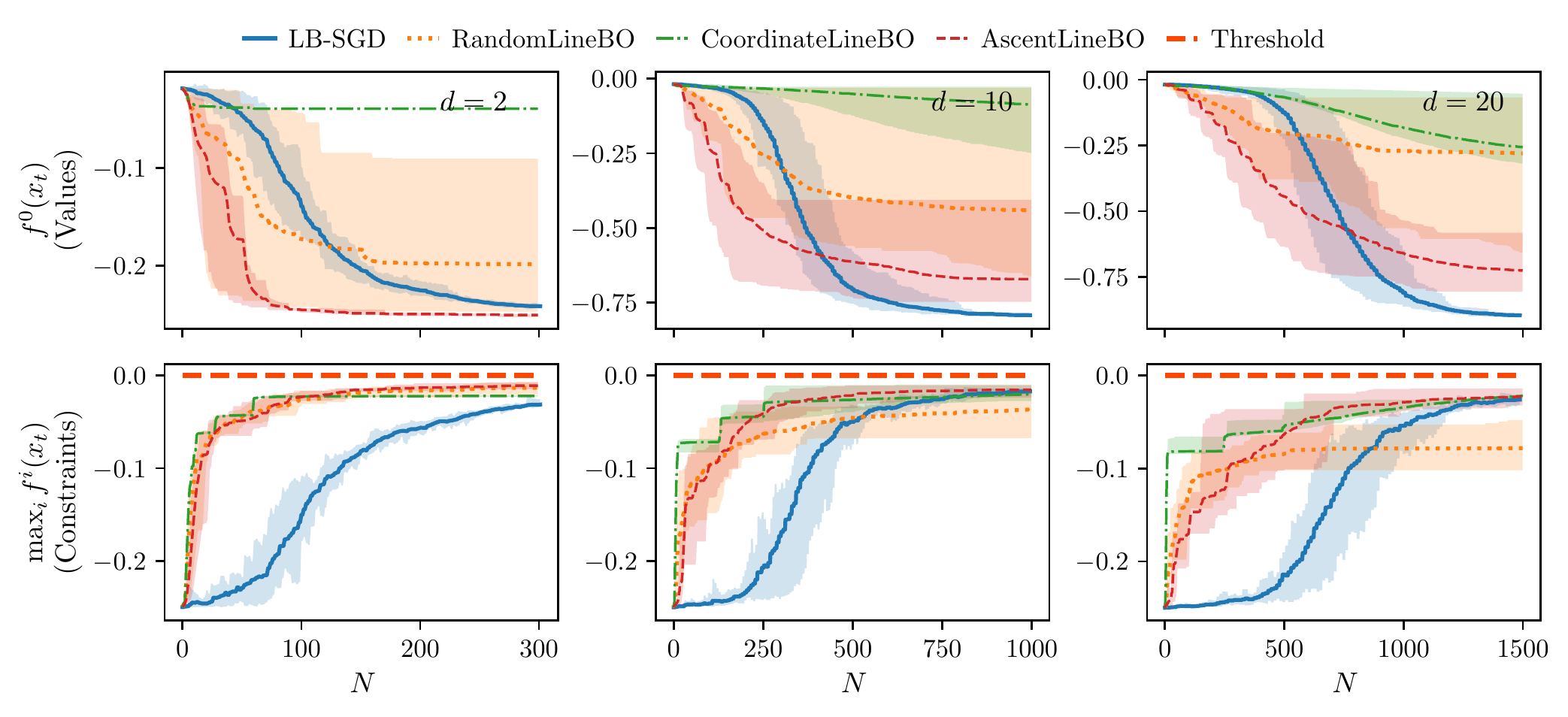}
        % figuraGaussian_d20.pdf}
        %  \includegraphics[width = 0.3\textwidth]{}\\
        %  \includegraphics[width = 0.3\textwidth]{}
        %  \includegraphics[width = 0.3\textwidth]{}
         \caption{Accuracy  and constraints  of \LBM\, and SafeRandomLineBO for $d = 2,10,20$, averaged over $10$ samples. $t$ here is the amount of zeroth-order oracle calls. In these experiments, for \LBM\, we decrease $\eta_{k+1} = 0.85 \eta_k$ gradually every $T_k = 3$ steps with $n_k = [\frac{d+1}{2}]$ value measurements at each step.  }
         \label{fig:LBM_LineBO}
     \end{figure}
     We report the run-times  in Table \ref{tab:runtimes_gaussian}. 
     \begin{table}[t]
          \centering
        \begin{tabular}{|c|c|c|c|}
          \hline
          $d$ & 2 & 10 & 20\\
          \hline
        LB-SGD & 0.828 & 2.186 & 2.676\\
        \hline
        LineBO & 12.883 & 298.097 & 1038.459\\
        \hline
        % LineBO(avg. per iteration) & 0.1 & 0.5 & 1.?
        % \line
          \end{tabular}
          \caption{Runtime (in seconds) dependence on dimensionality $d$ on the negative Gaussian minimization benchmark. We can observe that LineBO is significantly more expensive in computational cost (for the same number of queried points).}
        \label{tab:runtimes_gaussian}
      \end{table}
      
     To compare, in the case when the solution is in the interior of the constraint set achieved by setting $r = 10$ (that is, if the constraints do not influence the solution), the LineBO approach does not have this issue and can still be very efficient (see Figure \ref{fig:LBM_LineBO_Un}).
    
    \begin{figure}[t]
         \centering
        \includegraphics[width = \textwidth]{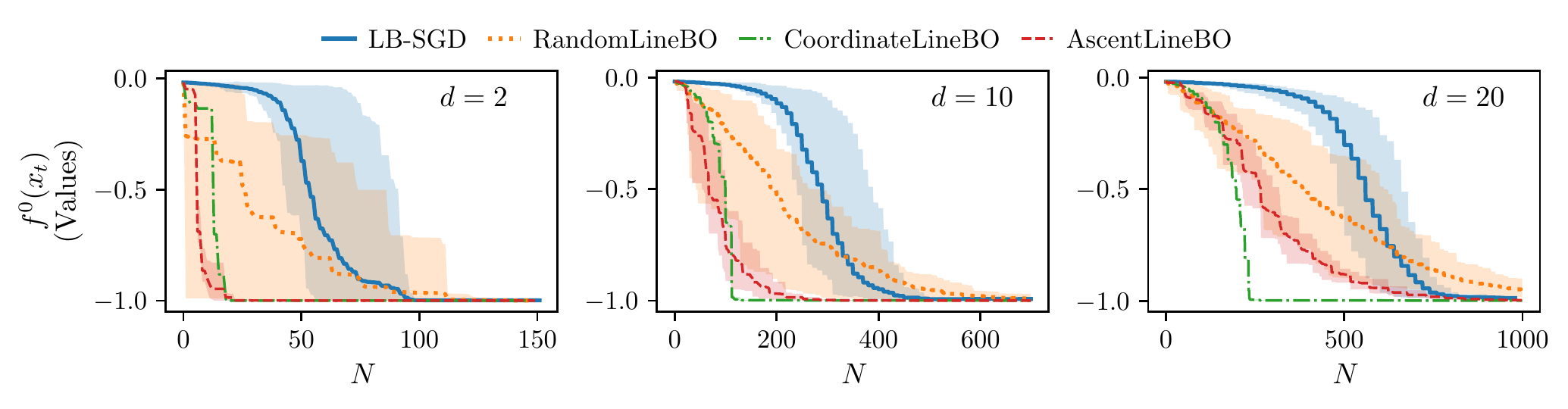}
        % figuraGaussianUn_d20.pdf}
        %  \includegraphics[width = 0.3\textwidth]{}
         \caption{Accuracy  and constraints  of \LBM\, and SafeRandomLineBO for $d = 2,10,20$, averaged over $10$ samples. $t$ here is the amount of zeroth-order oracle calls. In these experiments, for \LBM\, we decrease $\eta_{k+1} = 0.85 \eta_k$ gradually every $T_k = 3$ steps with $n_k = [\frac{d+1}{2}]$ value measurements at each step. }
         \label{fig:LBM_LineBO_Un}
     \end{figure}
     
     \subsection{\LBM\ for safe reinforcement learning}
We previously showed \LBM\ performance on smaller scale, classical black box benchmark problems. In this part, we showcase how \LBM\ scales to more complex, high-dimensional domains arising in RL. 
%problems with larger dimensionality and data. 
We consider the ``Safety Gym'' \citep{Ray2019} benchmark suite designed to evaluate safe RL approaches on constrained Markov decision processes (CMDP). In Safety Gym, a robot needs to reach a goal area while avoiding obstacles on the way. Navigation is done by observing first-person-view of the robot.
\subsubsection{Problem statement}
\paragraph{Constrained Markov decision processes}
The problem of safe reinforcement learning can be viewed as finding a policy that solves a \emph{constrained Markov decision process} \citep{altman-constrainedMDP}.
Briefly, we define a discrete-time episodic CMDP as a tuple $\left(\mathcal{S}, \mathcal{A}, \rho, P, R, \gamma, \mathcal{C}\right)$. At each time step $\tau \in \{0, \dots, \mathrm{T}\}$, an agent observes a state $s_\tau \in \mathcal{S}$. The initial state $s_0$ is determined according to some unknown distribution $\rho(s_0)$ such that $s_0 \sim \rho(s_0)$. Given a state, the agent decides what action $a_\tau \in \mathcal{A}$ to take next. Then, an unknown transition density $P : \mathcal{S} \times \mathcal{A} \times \mathcal{S} \rightarrow \left[0, 1\right]$, $s_{\tau + 1} \sim P(\cdot | s_\tau, a_\tau)$ generates a new state. $R : \mathcal{S} \times \mathcal{A} \rightarrow \mathbb{R}$ is a reward function that generates an immediate reward signal observed by the agent. The discount factor $\gamma \in (0, 1]$ weighs the importance of immediate rewards compared to future ones. Lastly, $\mathcal{C} = \left\{c^i : \mathcal{S} \times \mathcal{A} \rightarrow \mathbb{R} \bigm| i \in \left[m\right]\right\}$ is a set of immediate cost signals that the agent observes alongside the reward. The goal is to find a policy $\pi : \mathcal{S} \times \mathcal{A} \rightarrow \left[0, 1\right]$ that solves the constrained problem: 
\begin{equation}
   \label{eq:rl-objective}
    \begin{aligned}
	    \max_{\pi} &  \; \underbrace{\E \left[\sum_{\tau}^{} \gamma^\tau R(s_\tau, a_\tau) \right]}_{-f^0(x)} & \text{s.t. } \; \underbrace{\E \left[\sum_{\tau}^{} \gamma^\tau c^i(s_\tau, a_\tau)\right] - d^i}_{f^i(x)} \le 0 \; \forall i \in [m].
    \end{aligned}
\end{equation}
In the above, $d^i, i \in [m]$ are predefined threshold values for the expected discounted return of costs. Note that we take the expectation with respect to all stochasticity induced by the CMDP and policy.
\paragraph{On-policy methods as black-box optimization problems}
A typical recipe for solving CMDPs at scale is to parameterize the policy with parameters $x$ and use \emph{on-policy} methods. 
On-policy methods use Monte-Carlo sampling to sample trajectories from the environment, evaluate the policy, and finally update it \citep{DBLP:journals/corr/ChowGJP15, achiam2017constrained, Ray2019}. 
By using Monte-Carlo, these methods compute unbiased estimates of the constraints, objective and their gradients  \citep[e.g., via REINFORCE, cf.][]{NIPS1999_464d828b}, equivalently to the assumptions in \Cref{ssec:oracle}. 
In particular, the process of sampling trajectories from the CMDP and averaging them to estimate the objective and constraints in \Cref{eq:rl-objective} is equivalent to querying $f^0(x)$ and $f^i(x), i \in [m]$ and gives rise to a first-order, stochastic and unbiased oracle. However, without deliberately enforcing $x_t \in \mathcal{X}\ \forall t \in \{0, \dots, T\}$, these methods may use an unsafe policy during learning\footnote{It is important to note the work of \citet{dalal2018safe}, which, under a more strict setting, treats this specific challenge, but uses an off-policy algorithm \citep[DDPG]{https://doi.org/10.48550/arxiv.1509.02971} to solve CMDPs.}.\looseness -2

\paragraph{Solving CMDPs with \LBM}
Another shortcoming of the previously mentioned algorithms is that using only Monte-Carlo sampling often leads to high variance estimates of $f^0(x)$ and $f^i(x)$ \citep{https://doi.org/10.48550/arxiv.1506.02438}. To reduce this variance, one is typically required to take an abundant number of queries of $f^0(x)$ and $f^i(x)$, making the aforementioned algorithms sample-inefficient. One way to improve sample efficiency, is to learn a model of the CMDP, and query it (instead of the real CMDP) to have \emph{approximations} of $f^0(x)$ and $f^i(x)$. 
This allows us to trade off the high variance and sample inefficiency with some bias introduced my model errors. By making this compromise, model-based methods empirically exhibit improved sample efficiency compared to the previously mentioned on-policy methods \citep{10.5555/3104482.3104541,DBLP:journals/corr/abs-1805-12114,hafner2021mastering}. Motivated by this insight, we use LAMBDA \citep{As2022Constrained}, a recent model-based approach for solving CMDPs. In short, LAMBDA learns the transition density $P$ from image observations, and uses this learned model to try to find an optimal policy. To accommodate the complexity of learning a policy from a high dimensional input such as images, LAMBDA requires $\sim$588400 parameters to parameterize the policy. This allows us to demonstrate \LBM's ability to scale to problems with large dimensionality. To solve \Cref{eq:rl-objective}, LAMBDA queries \emph{approximations} of $f^0(x), f^i(x)$ by using its model of the CMDP together with its policy to sample \emph{model-generated, on-policy trajectories}.
As described before, these model-generated trajectories are subject to model errors which in turn makes the estimation of the objective and constraints biased. As a result, the assumptions in \Cref{ssec:oracle} do not necessarily, hold as in this case, the oracle is first-order, stochastic but \emph{biased}. Nevertheless, this biasedness is subject only to LAMBDA's model inaccuracies, so \LBM\ can still produce safe policies with high utility, as we empirically show in the following section. \citet{As2022Constrained} use the ``Augmented Lagrangian'' \citep{nocedal2006numerical} to turn the constrained problem into an unconstrained one. However, by using the Augmented Lagrangian, $x_t$ are generally infeasible throughout training, even if $x_0 \in \mathcal{X}$. Therefore, we use \LBM\ instead and empirically show that by using it we get $x_t \in \mathcal{X}\; \forall t \in \{0, \dots, T\}$.\looseness -2

\subsubsection{Experiments}
\paragraph{Addressing the assumptions} Let us briefly discuss the assumptions  in \Cref{section:problem} and explicitly state which of them do not hold. 
\begin{inparadesc}
    \item[Oracle.] As mentioned before, we cannot guarantee the assumptions in \Cref{ssec:oracle}. LAMBDA uses neural networks to model the transition density and to learn an approximation of the objective and constraints\footnote{The approximation of the objective and constraint is done by learning their corresponding \emph{value functions}. Please see \citet{As2022Constrained} for further details.}. For this reason, the assumption on unbiased zeroth-order queries, and assumptions of sub-Gaussian oracles do not hold.
    \item[Smoothness.] By choosing ELU activation function \citep{elupaper} we ensure the smoothness of our approximation of the objective and constraints.
    % \item[Compactness.] \textcolor{blue}{Not SuRE ABout THIS.}
    \item[MFCQ.] In general, similarly to the assumptions on the oracle, this cannot be guaranteed. However, in our experiments, the CMDP is defined to have only one constraint ($m = 1$) so this assumption is satisfied by definition.
    \item[Safe initial policy.] This assumption exists in a large body of previous work \citep{berkenkamp2017safe,koller2018learning,wabersich2021predictive}. Yet, it is not always clear how to design such a policy a-priori. In the following, we propose an experimental protocol in which this assumption empirically holds.
\end{inparadesc}
\begin{figure}
  \begin{minipage}[c]{0.31\textwidth}
    \caption{In Safety Gym, a robot should navigate to a goal area (green circle), while avoiding all other objects (turquoise and blue). The robot on the right picture should arrive to a smaller goal region, making navigation harder.}
  \end{minipage}\hspace{0.6cm}
  \begin{minipage}[c] {0.6\textwidth}\includegraphics[height=100pt,trim=400 0 20 0,clip]{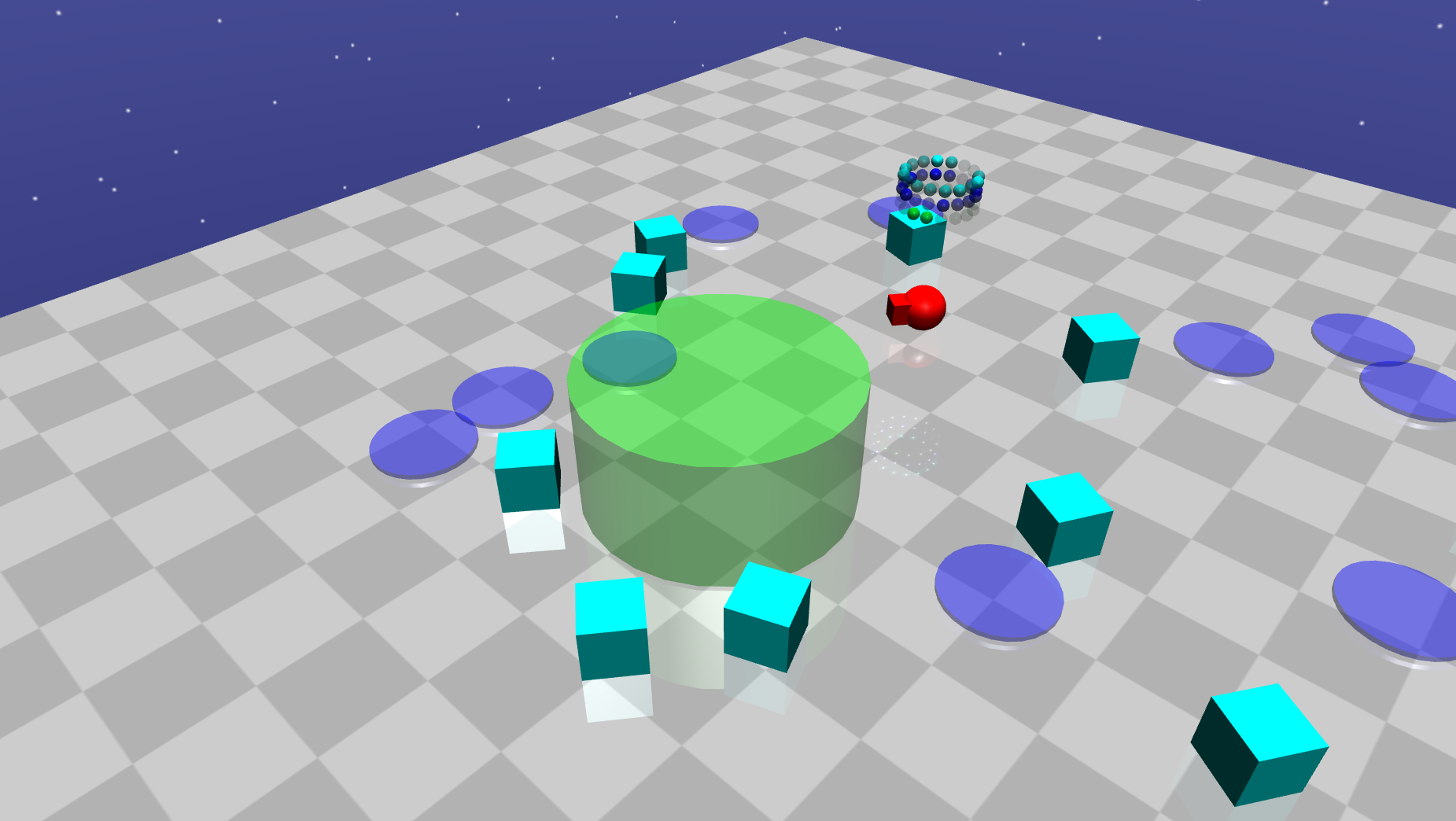}
    \hspace{0.4cm}
    \includegraphics[height=100pt,trim=200 0 200 0,clip]{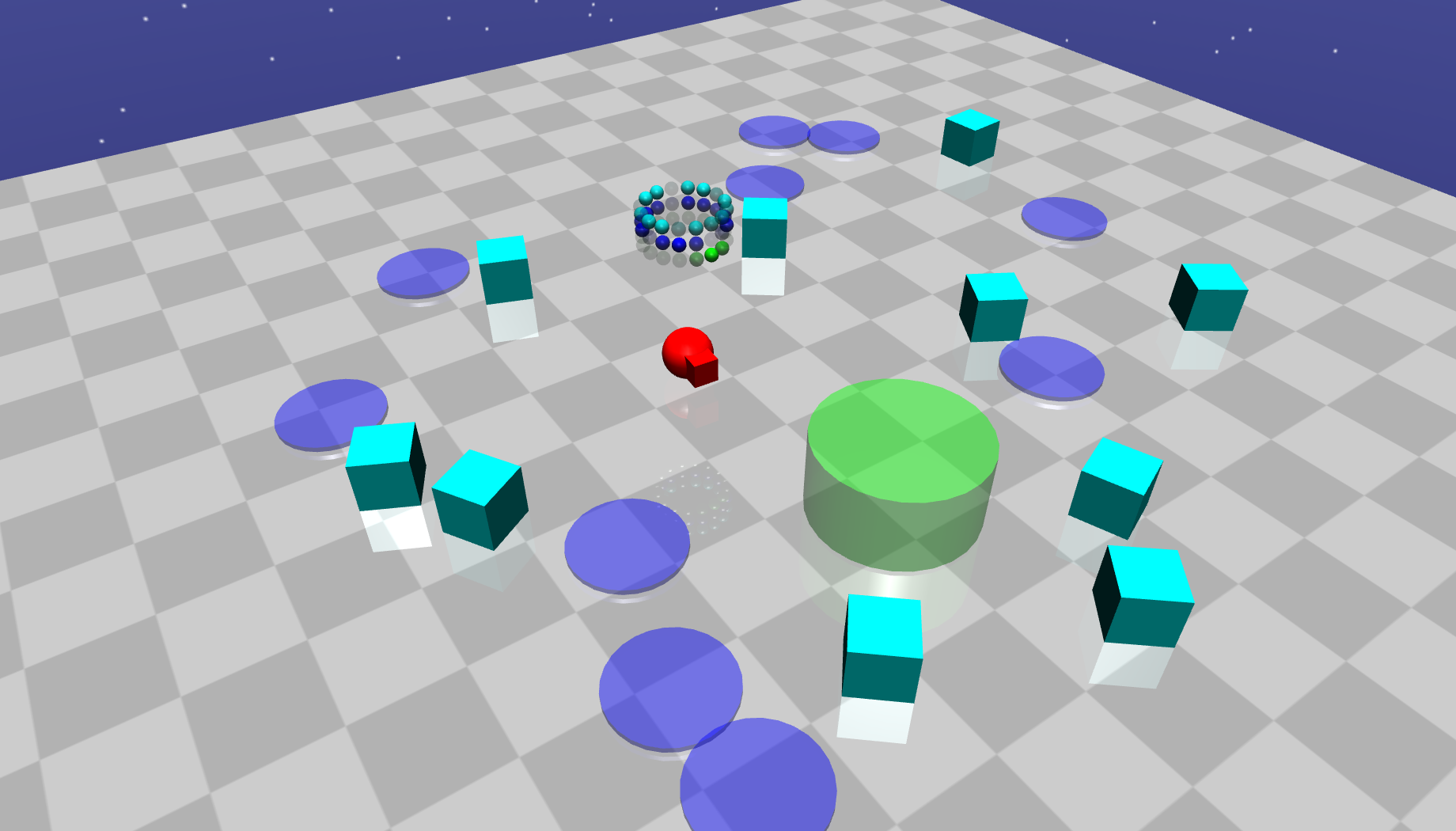}
  \end{minipage}
  \label{fig:big-normal-goal}
\end{figure}
\paragraph{Experiment protocol}
To ensure \LBM\ starts from a safe policy, we warm-start it with a policy that was trained on a similar, but easier task. Specifically, we follow a similar experimental setup as \citet{As2022Constrained} but first train the agent with LAMBDA on a task in which the \emph{goal area is larger}, as shown in \Cref{fig:big-normal-goal}. We use the policy parameters of the trained agent as a starting point for \LBM\ on a harder task, in which the goal area is smaller. As we later show, this allows the agent to start the second stage with a \emph{safe but sub-optimal policy}.
We verify this setup with all three available robots of the Safety-Gym benchmark suite \citep{Ray2019}, each run with 5 different random seeds. For our implementation, please see \url{https://github.com/lasgroup/lbsgd-rl}.

\paragraph{Results}
We first validate our experimental setup. In \Cref{fig:rl-no-update} we show that by using either \LBM\ or the Augmented Lagrangian in the first stage, and \emph{not updating} the policy in the second stage, LAMBDA's policy is \emph{safe} but \emph{sub-optimal}. With this, we empirically confirm the safe initial policy assumption on the second stage of training.
Further, given such a policy, we compare \LBM\ with the Augmented Lagrangian on the second stage. In \Cref{fig:rl-update} we demonstrate how the Augmented Lagrangian needs to ``re-learn'' a new value for the Lagrange multiplier and therefore fails to transfer safely to the harder task. However, by using \LBM, the agent is able to maintain safety after transitioning to the harder task. It is important to note that this safe transfer comes at the cost of limited exploration. As shown in \Cref{fig:rl-no-update,fig:rl-update}, \LBM\ finds slightly less performant policies compared to the Augmented Lagrangian. \textcolor{black}{In \Cref{fig:rl-update} we also compare with the classical constrained policy optimization (CPO) algorithm \citep{achiam2017constrained} following the implementation in \url{https://github.com/lasgroup/jax-cpo} as a baseline. CPO is a model-free method and hence requires much more environment interactions. Therefore, we only show the final level CPO reaches after 10 million environment steps.}
\begin{figure}
    \centering
    \includegraphics[trim=5 5 5 5,clip,width=\textwidth]{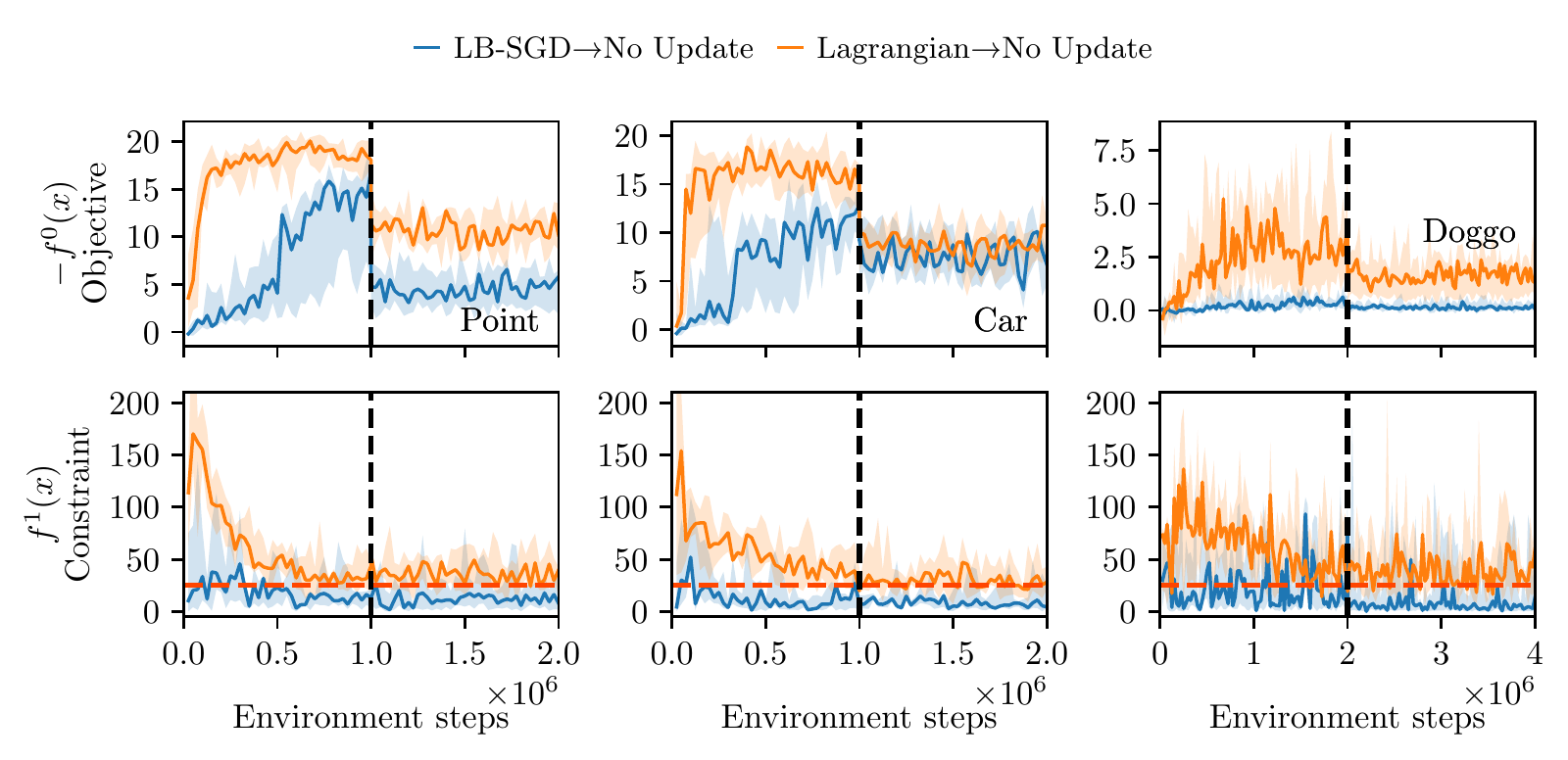}
    \caption{Across all different robots of the Safety Gym suite, LAMBDA with \LBM\ and the Augmented Lagrangian transfer well to the second stage in terms of safety. Since we do not update the policy, LAMBDA fails to reach the same task performance on the second stage, as expected. Shaded areas represent the the 5\% and 95\% percentiles of 5 different random seeds.}
    \label{fig:rl-no-update}
\end{figure}
\begin{figure}
    \centering
    \includegraphics[trim=5 5 5 5,clip,width=\textwidth]{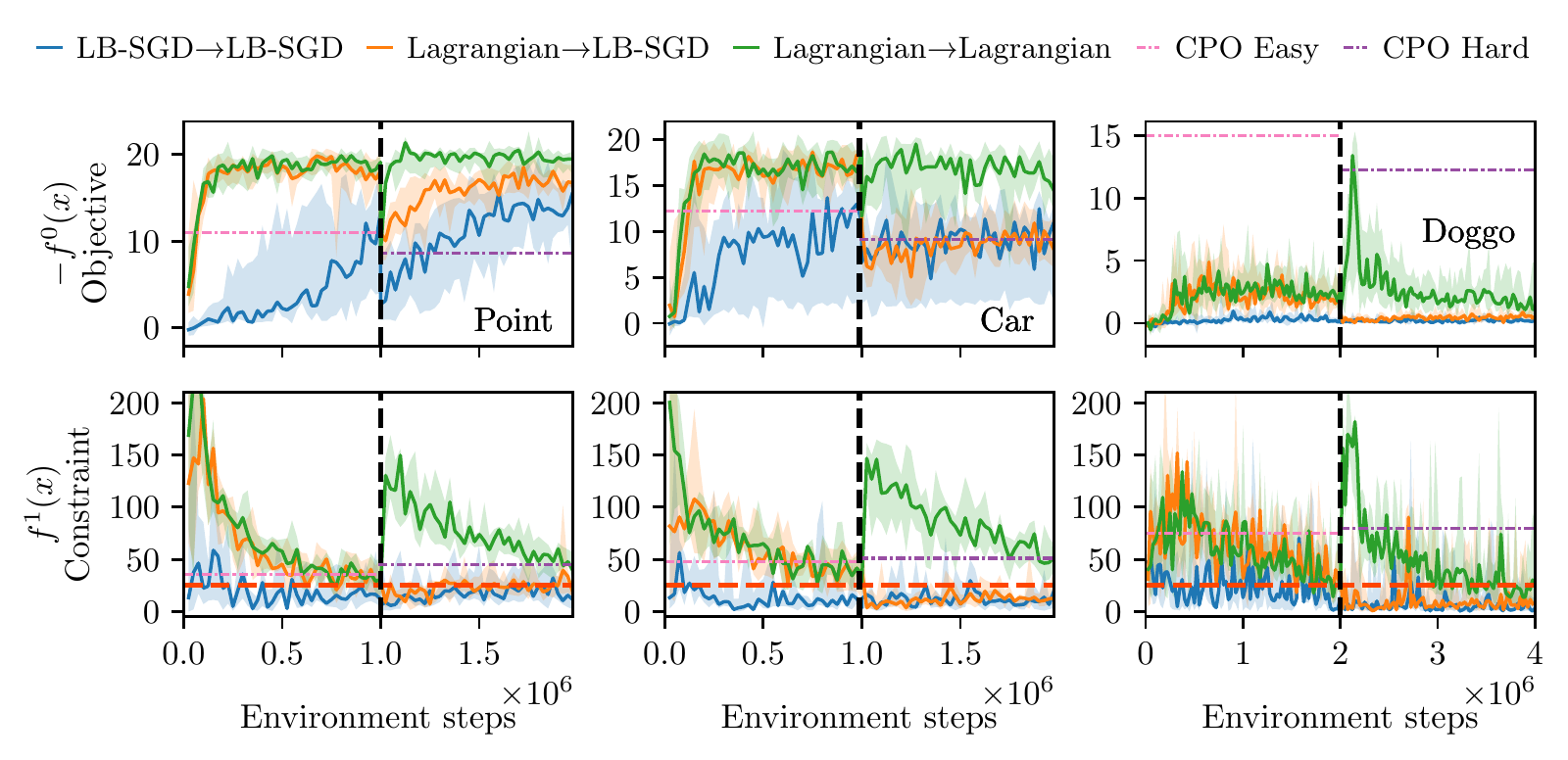}%
    \caption{LAMBDA with \LBM\ transfers safely to the second task. The main trade-off, however, is the lower asymptotic performance of \LBM. Conversely, Lagrangian $\rightarrow$ Lagrangian fails transfer safely as the constraints with all robots rapidly grow when the new task is revealed (as shown by the vertical dashed black line). \textcolor{black}{CPO Easy and CPO Hard correspond to CPO's performance on the easier task (larger goal size) and the harder task (smaller goal size). We train CPO on each task separately and show its performance at convergence, i.e., after $10 \times 10^6$ interaction steps for the ``Point'' and ``Car'' robots and $100 \times 10^6$ for the ``Doggo'' robot. As shown, while CPO achieves reasonable performance, it fails to satisfy the constraints in all tasks and all robots, as previously reported by \citet{Ray2019}.}}
    \label{fig:rl-update}
\end{figure}

     \section{Conclusion}
    In this paper, we addressed the problem of sample and computationally efficient safe learning. 
    %Perhaps surprisingly, the field of safe learning is still not fully explored. There is little prior work addressing safe learning  
    % in the \emph{convex} or \emph{non-convex} setting 
    %with \emph{unknown} non-linear constraints in zeroth-order optimization,
    %i.e., safe Bayesian approaches \citep{sui2015safe, berkenkamp2016bayesian,kirschner2019adaptive} which are not scalable for high dimensions or safe approaches based on Frank-Wolfe \citep{usmanova2019safe, fereydounian2020safe} for linear constraints. 
    %For more general smooth optimization problems with unknown non-linear constraints and safety requirements, 
    We proposed an approach based on logarithmic barriers, which we optimize using SGD with adaptive step sizes.
    %to address the log barrier minimization. 
    We analytically proved its safety during the learning and analyzed the convergence rates for non-convex, convex, and strongly-convex problems. We empirically demonstrated the performance of our method in comparison with other existing methods. We show that 
    1) its sample and computational complexity scale efficiently to high dimensions, and;
    2) it keeps optimization iterates within the feasible set with high probability. Additionally, we demonstrate the efficiency of the log barrier approach for high-dimensional constrained reinforcement learning problems. 
    
     While not requiring to explicitly specify a prior (in the Bayesian sense, as considered in safe Bayesian optimization), our method does involve  hyper-parameters such as $\eta_0$, $\eta$-decrease rate parameter $\omega$, amount of steps per episode $T_k$, and exhibits sensitivity to the noise. Also, in the non-convex case, it can converge only to a local minimum, as any other descent optimization approach. However, it is easy to implement and has efficient computational performance due to cheap updates. Therefore, \LBM\ is better suited to problems of high scale.
     
     For future work, it would be exciting to take the best of both worlds and combine the BO approaches that allow us to build and use a global model with our simple and cheap safe descent approach based on log barriers.
     \section{Acknowledgements}
     We thank the support of Swiss National Science Foundation, under the grant SNSF 200021\_172781, the Swiss National Science Foundation under NCCR Automation, grant agreement 51NF40 180545, and European Research Council (ERC) under the European Union's Horizon 2020 research and innovation programme grant agreement No 815943.
     \newpage
\appendix
\section{Additional proofs}
% \subsection{Connection of lemmas and theorems}
First, on Figure \ref{fig:relations} we show how various theorems and lemmas relate to each other throughout our paper.
    \begin{figure}
        \centering
        \includegraphics[width = \textwidth]{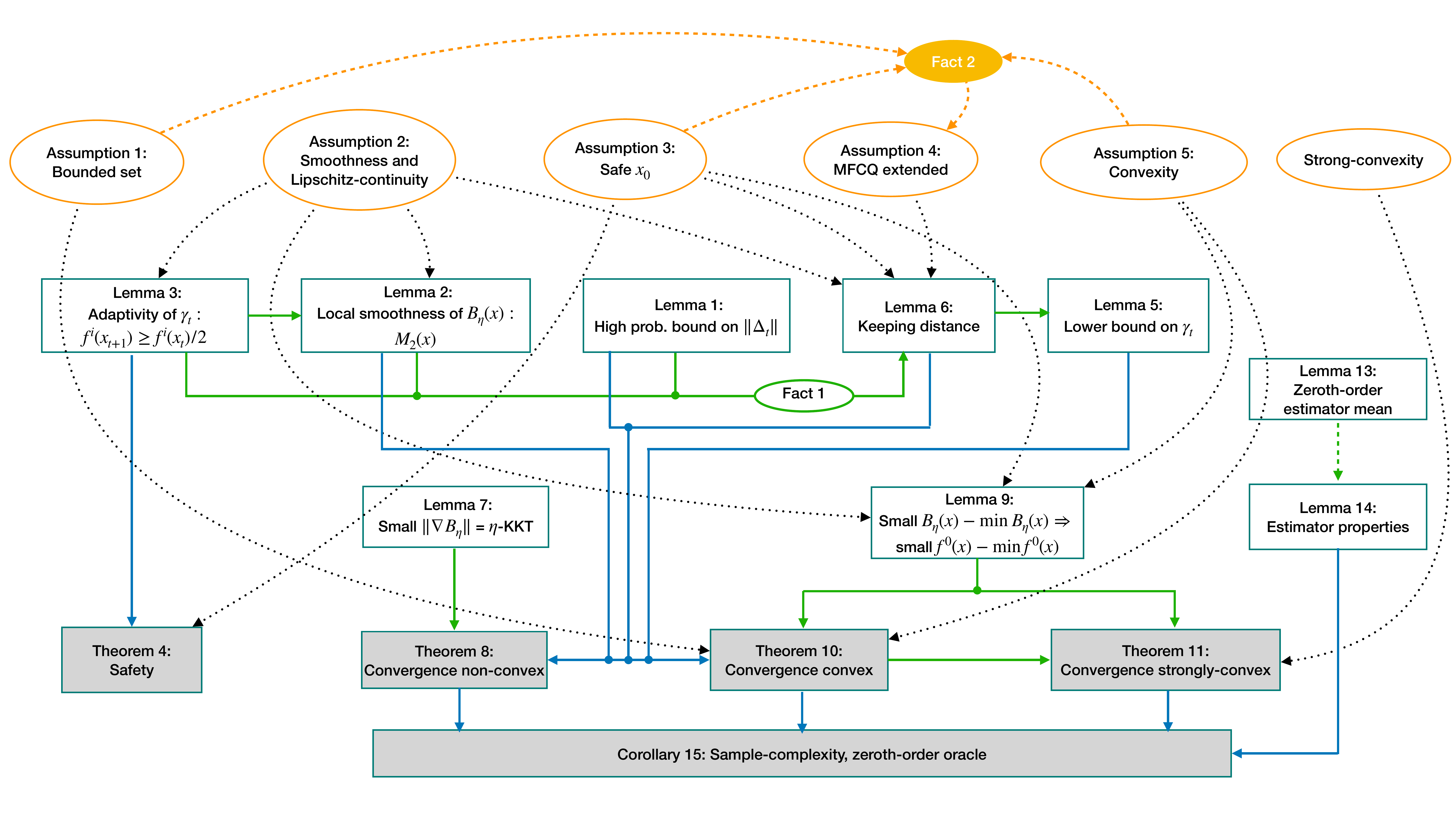}
        \caption{Relations of our theoretical results.}
        \label{fig:relations}
    \end{figure}
    
\begin{wrapfigure}[18]{r}{0.34\textwidth}
\vspace{-35pt}
    \centering
    \includegraphics[height=0.35\textwidth]{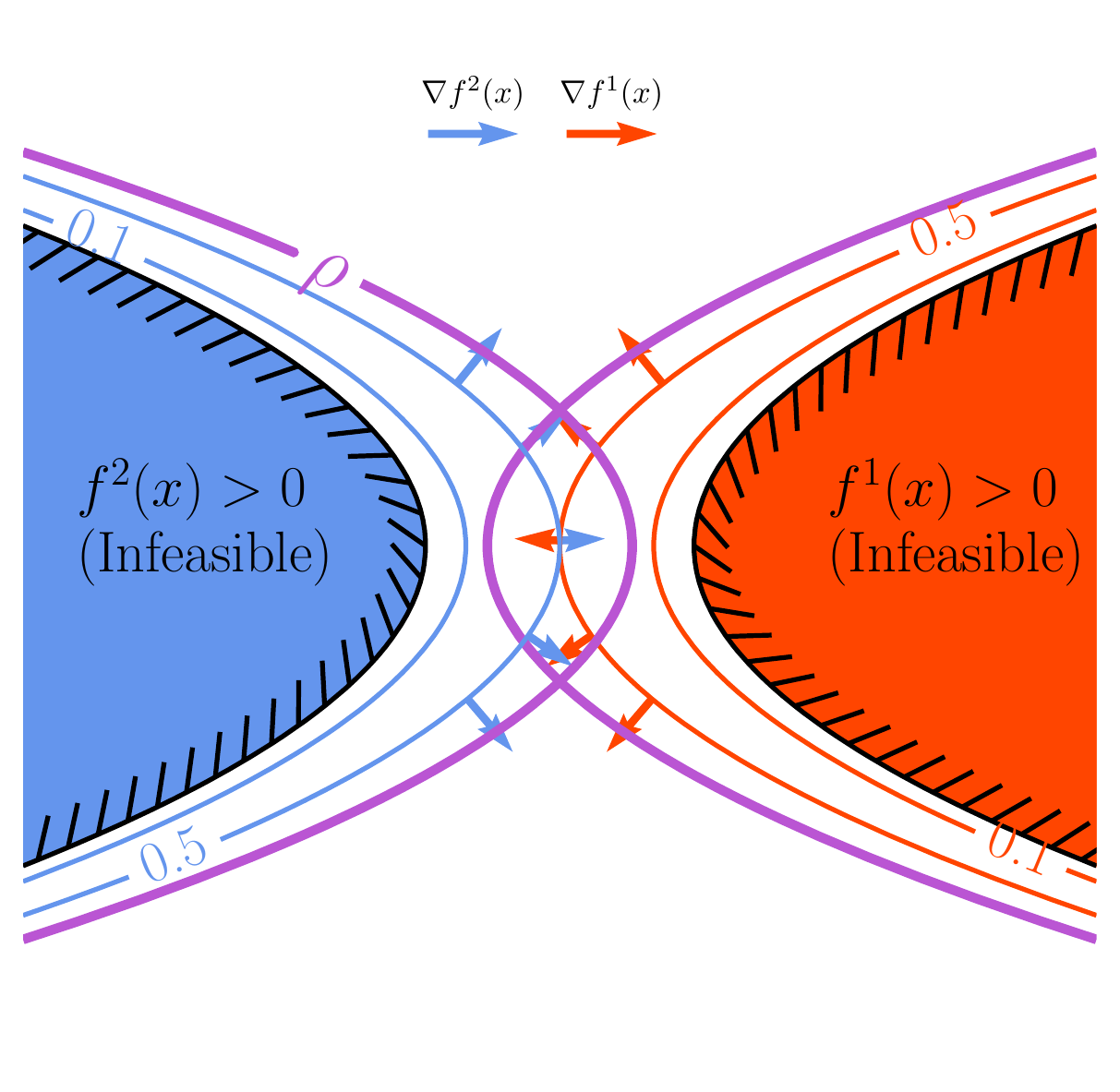}
    \caption{Illustration of extended MFCQ.}
    \label{fig:mfcq}
\end{wrapfigure}
\subsection{MFCQ}\label{A:MFCQ}
Let $x^*$ be a local minimizer of constrained problem (\ref{problem}), and let $\I(x^*) := \{i\in[m]: f^i(x^*) = 0 \}$ denote the set of active constraints at $x^*$. Then, the classic MFCQ is satisfied at $x^*$ if there exists $s\in \R^d$ such that $\la s,\nabla f^i(x^*)\ra < 0$ for all $i\in\I(x^*).$

On Figure \ref{fig:mfcq}, for the point in the middle, both constraints are $\rho$-almost active. Note that at this point, no descent direction exists for both constraints since their gradients are pointing to the opposite directions. That is, this set does not satisfy the extended MFCQ with the given  $\rho>0$.
\subsection{Proof of Lemma \ref{lemma:LBGradEstimatorVarBias}}\label{A:var}
% \subsubsection{High probabilistic bound}
% \label{appendix:bias}
\begin{proof}
        Using the triangle inequality, we get
    \begin{align*}%\label{bias_lb}
        \|\Delta_t\| & = \|g_t - \nabla B_{\eta}(x_t)\|\\
         & = \left\|G^0_n(x_t) - \nabla f^0(x_t) +  \sum_{i=1}^m \left[\eta G^i_n(x_t) \left(\frac{1}{\bar \alpha_t^i} - \frac{1}{\alpha_t^i} \right) + \eta( G^i_n(x_t) - \nabla f^i(x_t))  \frac{1}{\alpha_t^i}\right]\right\|\\
         & \leq \left\|G^0_n(x_t) - \nabla f^0(x_t) \right\|+  \sum_{i=1}^m \left[\eta\left\| G^i_n(x_t)\right\| \left(\frac{1}{\bar \alpha_t^i} - \frac{1}{\alpha_t^i} \right) + \frac{\eta}{\alpha_t^i}\left\| G^i_n(x_t) - \nabla f^i(x_t))\right\|  \right].
    \end{align*}
    With high probability, we know $\|G^i_n(x_t)\|\leq L_i$, and from the sub-Gaussian property we have:
    % $$\Prob\left\{\left\|G^0(x_t) - \nabla f^0(x_t)  \right\| \leq b_0 + \hat \sigma_0 \sqrt{\ln\frac{1}{\de}}\right\}\geq 1-\de$$
    
    $$\Prob\left\{\left\|G^i_n(x_t) - \nabla f^i(x_t)  \right\| \leq \hat b_i + \hat \sigma_i(n) \sqrt{\ln\frac{1}{\de}}\right\}\geq 1-\de$$
    
    $$\Prob\left\{\left|\alpha_t^i - \bar \alpha_t^i  \right| \leq \sigma_i(n) \sqrt{\ln\frac{1}{\de}}\right\}\geq 1-\de,$$
    from what we conclude:
    \begin{align*}
        \Prob\left\{\|\Delta_t\| \leq  \hat b_0 + \hat \sigma_0(n) \sqrt{\ln\frac{1}{\de}} + \sum_{i=1}^m \frac{\eta}{\bar\alpha_t^i}\left( \hat b_i + \hat \sigma_i(n) \sqrt{\ln\frac{1}{\de}}\right) + \sum_{i=1}^m L_i \frac{\eta}{\alpha_t^i\bar\alpha_t^i} \sigma_i(n) \sqrt{\ln\frac{1}{\de}} \right\} \geq 1-\de.
    \end{align*}
\end{proof}

%%%%%%%%%%%%%%%%%%%%%%%%%%%%%%%%%%%%%%
\subsubsection{Bias}\label{appendix:bias}
\begin{proof}
    Using $\E[XY]\leq \sqrt{\E [X^2] \E [Y^2]}$, we get
    \begin{align*}%\label{bias_lb}
        \|\E \Delta_t\|
         & = 
        \|\E[g_t - \nabla B_{\eta}(x_t)]\|\\
        & \leq \left\| \E [G^0(x_t) - \nabla f^0(x_t)] +  \sum_{i=1}^m \E \left[\eta G^i(x_t) \left(\frac{1}{\bar \alpha_t} - \frac{1}{\alpha_t} \right) + \eta( G^i(x_t) - \nabla f^i(x_t))  \frac{1}{\alpha_t}\right] \right\| \\
        %   M_2(x_t) \left(\E [\|G^i(x_t)\|^2|\mathcal F_t] + \E [\|\alpha_t - \hat\alpha_t\|^2|\mathcal F_t] \right)\\
         & = \left\|\sum_{i=1}^m \E \left[\eta G^i(x_t) \left(\frac{1}{\bar \alpha_t} - \frac{1}{\alpha_t} \right) \right] \right\| + \hat b^0_t + \sum_{i=1}^m \frac{\eta}{\alpha_t^i}\hat b^i_t
         \\ 
         & \leq   \sum_{i=1}^m \E \left[ \left\|\eta G^i(x_t) \left(\frac{1}{\bar \alpha_t^i} - \frac{1}{\alpha_t^i} \right)\right\|\right] + \hat b^0_t + \sum_{i=1}^m \frac{\eta}{\alpha_t^i}\hat b^i_t
         \\
        & \leq  \sum_{i=1}^m  \sqrt{ \E [\eta^2\|G^i(x_t)\|^2] \E \left[\frac{1}{(\bar\alpha_t^i)^4}\|\alpha_t^i - \bar\alpha_t^i\|^2\right] } + \hat b^0_t + \sum_{i=1}^m \frac{\eta}{\alpha_t^i}\hat b^i_t\\
        & \leq \sum_{i=1}^m  \frac{\eta L_i \sigma_t}{(\alpha_t^i)^2}  + \hat b^0_t + \sum_{i=1}^m \frac{\eta}{\alpha_t^i}\hat b^i_t
      ,
    \end{align*}
\end{proof}

% \subsubsection{Variance}
% % \label{A:var}
% \begin{proof}
%     Indeed, by definition  $$v_t^2  := \E \|g_t - \E g_t \|^2  = \Var G^0(x_t) 
%     + \eta^2 \sum_{i=1}^m
%     \E\left\| \frac{G^i(x_t)-\E G^i(x_t)}{\bar\alpha_t^i} 
%     + \E G^i(x_t)\left(\frac{1}{\bar\alpha_t^i} -\E\frac{1}{\bar\alpha_t^i}\right)\right\|^2_2.$$
%     Therefore
%      $$v_t^2 = \E_t   \|G^0(x_t,\xi_t) -  \nabla f^0(x_t)\|^2 
%     + \eta\sum_{i=1}^m \E_t  \|G^i(x_t,\xi_t)\|^2\E_t\left(\frac{1 }{\bar \alpha_t^i} - \frac{1}{\alpha_t^i}\right)^2
%     +  \eta\sum_{i=1}^m  \E_t \left(\frac{v_t^2}{\alpha_t^i} \right).$$
% \end{proof}
%%%%%%%%%%%%%%%%%%%%%%%%%%%%%%%%%%%%%%%

\subsection{Proof of the adaptivity}\label{P1}
\begin{proof}
    For adaptivity, we require
    $$f^i(x_{t+1}) \leq \frac{f^i(x_{t})}{2}.$$
    Using $M_i$-smoothness, we can bound the $i$-th constraint growth:
    \begin{align*}
        f^i(x_{t+1})
        & \leq f^i(x_{t}) + \la \nabla f^i(x_t), x_{t+1} - x_t \ra + \frac{M_i}{2}\|x_t-x_{t+1}\|^2 \\
        &  = f^i(x_{t}) - \gamma_t \la \nabla f^i(x_t),g_t \ra + \gamma_t^2\frac{M_i}{2}\|g_t\|^2
        % \\
        % &\leq f^i(x_{t}) + \gamma_t \| \nabla f^i(x_t)\|\|g_t\| + \gamma_t^2\frac{M_i}{2}\|g_t\|^2
    \end{align*}
    That is, the condition on $\gamma_t$ for adaptivity (and safety) we can formulate by
    $$ -\gamma_t \la \nabla f^i(x_t),g_t \ra + \gamma_t^2\frac{M_i}{2}\|g_t\|^2  \leq \frac{-f^i(x_{t})}{2} = \frac{\alpha_t^i}{2}.$$
    By carefully rewriting the above inequality without strengthening it, we get
    % Option one sims not the best
    %   \begin{align*}
    %     &     \gamma_t^2\frac{M_i}{2}\|g_t\|^2  - \gamma_t \la \nabla f^i(x_t),g_t \ra \pm  \frac{1}{2M_i}\|\nabla f^i(x_t)\|^2\leq \frac{\alpha_t}{2}\\
    %     & \frac{1}{2}\left\|\gamma_t\sqrt{M_i} g_t  -  \frac{\nabla f^i(x_t)}{\sqrt{M_i}}\right\|^2 \leq  \frac{1}{2M_i}\|\nabla f^i(x_t)\|^2 + \frac{\alpha_t}{2}\\
    %     & M_i\left\|\gamma_t g_t  -  \frac{\nabla f^i(x_t)}{M_i}\right\|^2 \leq  \frac{1}{M_i}\|\nabla f^i(x_t)\|^2 + \alpha_t\\
    %     & \left\|\gamma_t g_t  -  \frac{\nabla f^i(x_t)}{M_i}\right\|^2 \leq  \left\|\frac{\nabla f^i(x_t)}{M_i}\right\|^2 + \frac{\alpha_t}{M_i}\\
    %   \end{align*} 
    %   Option 2 more exact 
       \begin{align*}
        &     \gamma_t^2\frac{M_i}{2}\|g_t\|^2  - \gamma_t \la \nabla f^i(x_t),g_t \ra \pm  \frac{1}{2M_i}\frac{\la \nabla f^i(x_t),g_t\ra^2}{\|g_t\|^2}\leq \frac{\alpha_t}{2}
        \\
        & \left(\gamma_t\|g_t\| - \frac{\la \nabla f^i(x_t),g_t\ra}{M_i\|g_t\|}\right)^2 
        \leq \frac{\alpha_t}{ M_i} + \frac{\la \nabla f^i(x_t),g_t\ra^2}{M_i^2\|g_t\|^2}
        \end{align*}
        Using the quadratic inequality solution, we obtain the following sufficient bound on the adaptive $\gamma_t:$ 
        $$ 
         \gamma_t\|g_t\| 
        \leq \frac{\la \nabla f^i(x_t),g_t\ra}{M_i\|g_t\|} + \sqrt{\frac{\alpha_t}{ M_i} + \frac{\la \nabla f^i(x_t),g_t\ra^2}{M_i^2\|g_t\|^2}} = (*)$$
        Then, we can rewrite this expression of the right part as follows:
        \begin{align*} 
          (*)& = \sqrt{\frac{\la \nabla f^i(x_t),g_t\ra^2}{M_i^2\|g_t\|^2} + \frac{\alpha^i_t}{ M_i} } + \frac{\la \nabla f^i(x_t),g_t\ra}{M_i\|g_t\|}
          = 
          \sqrt{\frac{\alpha_t^i}{ M_i}}
          \left(\sqrt{\frac{\la \nabla f^i(x_t),g_t\ra^2}{M_i \alpha_t^i\|g_t\|^2} + 1} + \frac{\la \nabla f^i(x_t),g_t\ra}{\sqrt{M_i \alpha_t^i}\|g_t\|}\right)\\
          &= \sqrt{\frac{\alpha_t^i}{ M_i}}
          \frac{1}{
          \sqrt{\frac{\la \nabla f^i(x_t),g_t\ra^2}{M_i \alpha_t^i\|g_t\|^2} + 1} - \frac{\la \nabla f^i(x_t),g_t\ra}{\sqrt{M_i \alpha_t^i}\|g_t\|}}
          = \frac{\alpha_t^i}{\sqrt{\frac{\la \nabla f^i(x_t),g_t\ra^2}{\|g_t\|^2} + M_i \alpha_t^i} - \frac{\la \nabla f^i(x_t),g_t\ra}{\|g_t\|}} \\
          & =  \frac{\alpha_t^i}{\sqrt{(\theta_t^i)^2 + M_i \alpha_t^i} - \theta_t^i}
        \end{align*} 
Therefore, the condition $\gamma_t \leq  \min_{i\in[1,m]}\left\{\frac{\alpha_t^i}{\sqrt{(\theta_t^i)^2 + M_i \alpha_t^i} - \theta_t^i}\right\}
\frac{1}{\|g_t\|} $
is sufficient for $f^i(x_{t+1})\leq \frac{f^i(x_{t})}{2}.$ 
Using the Cauchy-Schwartz inequality, we can simplify this condition (but making it  more conservative):
$$(*) \geq \frac{\alpha_t^i}{
          \sqrt{(\theta_t^i)^2 + \alpha_t M_i} +|\theta_t^i|}
          \geq 
         \frac{\alpha^i_t}{2|\theta_t^i| + \sqrt{\alpha_t^i M_i}}.$$
%Thus, the simpler condition $\gamma_t \leq  \min_{i\in[1,m]}\left\{\frac{\alpha^i_t}{2\frac{|\la \nabla f^i(x_t),g_t\ra|}{\|g_t\|} + \sqrt{\alpha_t^i M_i}}\right\}
%\frac{1}{\|g_t\|} $ is also sufficient for $f^i(x_{t+1})\leq \frac{f^i(x_{t})}{2}$ (but more conservative.) 
\end{proof}

\subsection{Proof of the local smoothness}\label{P2}

\begin{proof}
Let us define  the hessian of the log-barrier $B_{\eta}(x)$ by $H_B(y)$ at the region $Y_t$ around $x_t$ such that $Y_t := \{y\in\R^d|y = x_t - u g_t, u\in[0,\gamma_t]\}.$ 
 Note that by definition of the log barrier, the hessian of it at the point $y\in Y_t$ is given by $$H_B(y) = \nabla^2B_{\eta}(y) =  \nabla^2 f^0(y) + \sum_{i=1}^m \eta\frac{\nabla^2 f^i(y)}{-f^i(y)} + \sum_{i=1}^m \eta\frac{\nabla f^i(y)\nabla f^i(y)^T}{(-f^i(y))^2}.$$
\textcolor{black}{From the above and the fact that for any $y \in Y_t$  we have $-f^i(y) \geq 0.5 \alpha_t$,} we get
\begin{align*}
    | g_t^T H_B(y) g_t| & \leq M_{\textcolor{black}{0}}\| g_t\|^2 + \eta \sum_{i=1}^m \frac{M_i}{0.5 \alpha_{t}^i}\| g_t\|^2 + \eta\sum_{i=1}^m  \frac{(\nabla f^i(y)^T g_t/\|g_t\|)^2}{(0.5 \alpha_{t}^i)^2}\|g_t\|^2 \\
    & \leq \|g_t\|^2\left(M_{\textcolor{black}{0}} + \eta \sum_{i=1}^m \frac{M_i}{0.5 \alpha_{t}^i} + \eta\sum_{i=1}^m  \frac{\la \nabla f^i(y),  g_t\ra^2/\| g_t\|^2}{(0.5 \alpha_{t}^i)^2}\right)\\
    & \leq \| g_t\|^2\left(M_{\textcolor{black}{0}} + 2\eta \sum_{i=1}^m \frac{ M_i}{\alpha_{t}^i} + 4 \eta\sum_{i=1}^m  \frac{\la \nabla f^i(y), g_t\ra^2}{(\alpha_{t}^i)^2 \| g_t\|^2}\right).
\end{align*}
Thus, $$M_2(x_t) = M_{\textcolor{black}{0}} + 2\eta \sum_{i=1}^m \frac{ M_i}{\alpha_{t}^i} + 4 \eta\sum_{i=1}^m  \frac{\la \nabla f^i(y),  g_t\ra^2}{\textcolor{black}{(\alpha_{t}^i)}^2 \| g_t\|^2}.$$
\textcolor{black}{
Note that $\nabla f^i(y)$ is unknown, therefore we have to bound it based on $\nabla f^i(x_{t}).$ 
Observe that 
\begin{align}\label{eq:p1}
&\la \nabla f^i(y),  g_t\ra^2 \\
&= \la \nabla f^i(x_{t}),  g_t\ra^2 +  \la \nabla f^i(y) - \nabla f^i(x_{t}),  g_t\ra^2 + 2\la \nabla f^i(x_{t}),  g_t\ra \la \nabla f^i(x_{t+1}) - \nabla f^i(x_{t}),  g_t\ra.\nonumber
\end{align}
Using the $M_i$-smoothness of $f^i(x)$ we get:
$$\|\nabla f^i(x_{t}) - \nabla f^i(y)\| \leq M_i\|x_t - y\| \leq M_i\gamma_t\|g_t\|.$$
Recall that we choose the step-size such that: 
$$\gamma_t \leq  \min_{i\in[m]}\left\{\frac{\alpha^i_t}{2|\theta_t| + \sqrt{\alpha_t^i M^i}}\right\}
\frac{1}{\|g_t\|_2}.$$
Hence,
$$\|\nabla f^i(x_{t}) - \nabla f^i(y)\| \leq M_i\|x_t - y\| \leq \frac{M_i \alpha^i_t}{2|\theta_t| + \sqrt{\alpha_t^i M_i}} \leq  \frac{M_i \alpha^i_t}{\sqrt{\alpha_t^i M_i}} = \sqrt{\alpha_t^i M_i}.$$
Then, using \Cref{eq:p1} and $\la \nabla f^i(x_{t}),  g_t\ra^2 = (\theta_t^i)^2\|g_t\|^2$ we get:
\begin{align}\label{eq:p2}
    \la \nabla f^i(y),  g_t\ra^2 
    &\leq |\theta_t^i|^2\|g_t^i\|^2 
    + \alpha_t^i M_i\|g_t^i\|^2 + 2|\theta_t^i|\sqrt{\alpha_t^i M^i}\|g_t^i\|^2\\
    & \leq  2|\theta_t^i|^2\|g_t^i\|^2
    + 2\alpha_t^i M^i\|g_t^i\|^2.
\end{align}
The above inequality is due to the fact that $a^2 + 2ab + b^2 \leq 2a^2 + 2b^2$.
Thus, we finally obtain 
\begin{align*}
M_2(x_t) 
& = M_0 + 2\eta \sum_{i=1}^m \frac{ M_i}{\alpha^i_{t}} + 4 \eta\sum_{i=1}^m  \frac{2|\theta_t^i|^2\|g_t^i\|^2
    + 2\alpha_t^i M^i\|g_t^i\|^2}{(\alpha^i_{t})^2 \|g_t\|^2}\\
& = M_0 + 10\eta \sum_{i=1}^m \frac{ M_i}{\alpha^i_{t}} + 8 \eta\sum_{i=1}^m  \frac{(\theta_t^i)^2}{(\alpha^i_{t})^2 }.
\end{align*}
}
% $$M_2(x_t) \leq M_0 + 2\eta \sum_{i=1}^m \frac{ M_i}{\alpha_{t}} + 4 \eta\sum_{i=1}^m  \frac{4(\theta_t^i)^2 +(\theta_t^i)^2 + M_i\alpha_t^i + 2\theta_t^i\sqrt{M_i\alpha_t^i}}{(\alpha_{t}^i)^2 }$$
\end{proof}
\subsection{Proof of Fact \ref{fact:alpha_prod}}\label{Appentix:Proof_fact:alpha_prod}
 \paragraph{Fact \ref{fact:alpha_prod}}
    \textit{Let Assumptions 
    % \ref{assumption:2_diameter}, 
    \ref{assumption:1}, \ref{assumption:3} hold, and Assumption \ref{assumption:mfcq} hold with $\rho \geq  \eta$, and let 
    $\hat \sigma_i(n) \leq \frac{(\alpha_t^i)^2 L}{ 2\eta\sqrt{\ln \frac{1}{\de}}}$, 
    $\hat b_i \leq \frac{(\alpha_t^i)^2 L}{2\eta}$, and $\sigma_i(n) \leq \frac{\alpha_t^i}{2\sqrt{\ln \frac{1}{\de}}}$.  If at iteration $t$ we have $\min_{i\in[m]} \alpha_t^i \leq \bar c\eta$ 
    with  $\bar c := \frac{l}{L} \frac{1}{2m+1}$, then, for the next iteration $t+1$ we get 
    $\prod_{i\in \mathcal \I }\alpha_{t+1}^i\geq \prod_{i\in \mathcal \I }\alpha_t^i$ for any $\I : \I_t \subseteq \I$ with $\I_t :=\{i\in[m]:\alpha_t^i\leq \eta\}$.}
\\
\begin{proof} Using the local smoothness of the log barrier, we can see:
\begin{align}\label{eq:bound1}
    \eta \sum_{i\in \I_t} -\log \alpha_{t+1}^i 
    & \leq \eta \sum_{i\in \I_t} -\log \alpha_t^i - \gamma_t \la \eta \sum_{i\in\I_t}\frac{\nabla f^i(x_t)}{\alpha^i_t},g_t\ra +\frac{M_2(x_t)}{2}\gamma_t^2\|g_t\|^2 \nonumber\\
    & \leq \eta \sum_{i\in \I_t} -\log \alpha_t^i + \gamma_t \left(-\la \eta \sum_{i\in\I_t}\frac{\nabla f^i(x_t)}{\alpha^i_t},g_t\ra +\frac{1}{2}\|g_t\|^2\right)\nonumber\\
    & = \eta \sum_{i\in \I_t} -\log \alpha_t^i + \frac{\gamma_t \eta^2}{2} \left( 2\la A, A + B\ra +\|A + B\|^2\right)\nonumber \\
    & = \eta \sum_{i\in \I_t} -\log \alpha_t^i + \frac{\gamma_t \eta^2}{2} \left(\|B\|^2 - \|A\|^2\right), 
\end{align}
where 
$g_t = A + B$, with $A := \sum_{i\in\I_t}\frac{\nabla f^i(x_t)}{\alpha_t^i}$ and $B:= \frac{g_t}{\eta} -  \sum_{i\in\I_t}\frac{\nabla f^i(x_t)}{\alpha_t^i} $. Using Assumption \ref{assumption:mfcq} we obtain a lower bound on $\|A\|$:
\begin{align}
    \|A\| = \left\|\sum_{i\in\I_t}\frac{\nabla f^i(x_t)}{\alpha_t^i}\right\| & \geq \la\sum_{i\in\I_t}\frac{\nabla f^i(x_t)}{\alpha_t^i}, s_x\ra \geq \sum_{i\in\I_t}\frac{\la\nabla f^i(x_t), s_x\ra}{\alpha_t^i} \geq \sum_{i\in\I_t} \frac{l}{\alpha_t^i}.
\end{align}
The second part $\|B\|$ we can upper bound with high probability $1-\de$ using the definition of $\I_t$ as follows (since $\forall \alpha_t^j\notin \I_t$ we have $ \alpha_t^j\geq \eta$, therefore $\bar \alpha_t^j\geq \eta/2$ for $\sigma_i(n) \leq \frac{\alpha_t^i}{2\sqrt{\ln \frac{1}{\de}}}$):
\begin{align}
    \|B\| &= \left\|\frac{G^0_n(x_t, \xi_t)}{\eta} 
        + \sum_{j\notin\I_t}\frac{G^j_n(x_t, \xi_t)}{\bar\alpha_t^j} + \sum_{i\in\I_t}\frac{G^i_n(x_t,\xi_t)}{\bar\alpha_t^i} - \sum_{i\in\I_t}\frac{\nabla f^i(x_t)}{\alpha_t^i}\right\|\nonumber \\
    & \leq 
    \frac{\max_i\|G^i_n(x_t,\xi_t)\|}{\eta}\left(1 + 2(m - |\I_t|) \right) 
        + \sum_{i\in\I_t}\left(\frac{\hat \sigma_i(n) \sqrt{\ln \frac{1}{\de}} + \hat b_i}{ \alpha_t^i} + \|G^i_n(x_t,\xi_t)\|\left|\frac{1}{\bar \alpha_t^i} -\frac{1}{ \alpha_t^i}\right|\right)\nonumber \\
    & \leq 
    \frac{\max_i\|G^i_n(x_t,\xi_t)\|}{\eta}\left(1 + 2(m - |\I_t|) \right) 
        + \sum_{i\in\I_t}\left(\frac{\hat \sigma_i(n) \sqrt{\ln \frac{1}{\de}}+ \hat b_i}{\alpha_t^i} + \|G^i_n(x_t,\xi_t)\|\frac{\sigma_i(n)\sqrt{\ln\frac{1}{\de}} }{\bar \alpha_t^i\alpha_t^i}\right)\nonumber\\
    & \leq \frac{L}{\eta}\left(2m + 1\right),
\end{align}
for 
    $\hat \sigma_i(n) \leq \frac{\alpha_t^i\max\|G^i_n(x_t,\xi_t)\|}{2\eta\sqrt{\ln \frac{1}{\de}}}$, $\hat b_i \leq \frac{\alpha_t^i\max\|G^i_n(x_t,\xi_t)\|}{2\eta
    %	\sqrt{\ln\frac{1}{\de}}
    }$,
and $\sigma_i(n) \leq \frac{(\alpha_t^i)^2}{2\eta\sqrt{\ln \frac{1}{\de}}}$, implying $\bar \alpha_t^i \geq \alpha_t^i/2$ and using $\|G^i_n(x_t,\xi_t)\| \leq L.$
Then, if $\min \alpha_t^i \leq \bar c\eta$, we have $\sum_{i \in \I_t}\frac{1}{\alpha_t^i} \geq \frac{1}{\bar c \eta} = \frac{L}{l\eta}\left( 2m+1 \right),$ and therefore with high probability $\|B\| \leq \|A\|$. Then we get (\ref{eq:bound1}), that implies
\begin{align}
    \prod_{i\in \I_t}\alpha^i_{t+1} \geq \prod_{i\in \I_t}\alpha^i_{t}.
\end{align}
Moreover, using the same reasoning, we can prove that
\begin{align}
    \prod_{i\in \I}\alpha^i_{t+1} \geq \prod_{i\in \I}\alpha^i_{t}.
\end{align}
for any subset of indices $\I\subseteq [m]$ such that $\I_t \subseteq \I .$
\end{proof}

\subsection{Lower bound on $\gamma_t$}\label{Appendix:5}
Here we assume $\underline{\alpha}_t^i \geq c\eta.$ Recall that 
\begin{align*}
    \gamma_t = \min\left\{\min_{i\in[m]}\left\{\frac{\underline{\alpha}^i_t}{2|\hat \theta^i_t| + \sqrt{\underline{\alpha}_t^i M_i}}\right\}
    \frac{1}{\|g_t\|}, \frac{1}{\hat M_2(x_t)}\right\}.
\end{align*} where 
\begin{align*}
    \hat M_2(x_t) = M_0 + \textcolor{black}{10} \eta \sum_{i=1}^m \frac{ M_i}{\underline{\alpha}^i_{t}} + \textcolor{black}{8} \eta \sum_{i=1}^m  \frac{(\hat \theta^i_t)^2}{(\underline{\alpha}^i_{t})^2}.
\end{align*}
We get the lower bound by constructing a bound on both of the terms inside the minimum.\\ 
%First, note that by definition $\hat M_{2}(x_t) = \frac{L}{\nu_k} + \eta_k  \sum _{i =1}^m\frac{\frac{L}{\nu_k}}{\hat\alpha_t^i}  + \eta_k \sum_{i=1}^m\frac{L}{(\hat\alpha_t^i)^2}.$ 
1) We have 
$\Prob\left\{\hat M_{2}(x_t) \leq \left(1+\textcolor{black}{10}\frac{m}{c}\right)M + \textcolor{black}{8}\frac{mL^2}{\eta c^2}\right\}\geq 1-\de$ (Due to Lemma \ref{lemma:keeping_distance}, and by definition of $\hat M_2(x_t)$), which implies 
    $$
    \Prob\left\{\frac{1}{\hat M_2(x_t)} \geq \eta\left( \frac{1}{\frac{\textcolor{black}{8} m}{c^2}L^2 + \eta(1+\textcolor{black}{10}\frac{m}{c})M}\right)\right\}\geq 1-\de.
    $$
2) 
% For $n\geq \frac{\sigma^2}{\nu_k^2}\ln \frac{1}{\de}$,
Using Lemma \ref{lemma:keeping_distance}
we get 
$\Prob\left\{\|g_t\| \leq L_0 + \sum_{i=1}^m\frac{L_i}{c}\right\}\geq 1-\de.$
 Hence, we can bound       
    $$
        \Prob\left\{\min_{i\in[m]}\left\{\frac{\underline{\alpha}^i_t}{2|\hat \theta^i_t| + \sqrt{\underline{\alpha}_t^i M_i}}\right\}
    \frac{1}{\|g_t\|} \geq \frac{c\eta}{ (2L + \sqrt{M c\eta})L(1 + \frac{m}{c})}\right\}\geq 1-\de.
    $$
 Therefore,
    $$
        \Prob\left\{\gamma_t \geq \frac{\eta}{2}\min 
        \left\{
         \frac{1}{\frac{\textcolor{black}{4} m}{c^2}L^2 + \eta(\textcolor{black}{0.5}+\textcolor{black}{5}\frac{m}{c})M}, \frac{1}{L^2(\frac{1}{c}+\frac{m}{c^2}) + 0.5\sqrt{\frac{M \eta}{cL^2}}L^2(1+\frac{m}{c}) }\right\}
        \right\}\geq 1-\de,
    $$
    $$
        \Prob\left\{\gamma_t \geq \frac{\eta}{2L^2(1+\frac{m}{c})}\min \left\{
         \frac{1}{\frac{\textcolor{black}{4}}{c} + \frac{\textcolor{black}{5}M\eta}{L^2}}, \frac{1}{\frac{1}{c} + \sqrt{\frac{M \eta}{4cL^2}} }\right\}
        \right\}
        \geq 1-\de.
    $$
    $$
        \Prob\left\{\gamma_t \geq \frac{c\eta}{2L^2(1+\frac{m}{c})}\min \left\{
         \frac{1}{\textcolor{black}{4} + \frac{\textcolor{black}{5}Mc\eta}{L^2}}, \frac{1}{1 +  \sqrt{\frac{Mc\eta}{4L^2}} }\right\}
        \right\}
        \geq 1-\de.
    $$
Finally, the bound is
    $$
        \Prob\left\{\gamma_t \geq \eta C 
        \right\}\geq 1-\de.
    $$
    with
    $$
    C := \frac{c\eta}{2L^2(1+\frac{m}{c})}\min \left\{
         \frac{1}{\textcolor{black}{4} + \frac{\textcolor{black}{5}Mc\eta}{L^2}}, \frac{1}{1 +  \sqrt{\frac{Mc\eta}{4L^2}} }\right\}.
    $$
        %  \frac{1}{2L^2}\min 
        % \left\{
        % \frac{c^2}{20m} + \frac{c}{(1+6\frac{m}{c})}, \frac{c}{1+\frac{m}{c}}\right\}.
    
\subsection{Proof of Lemma \ref{lemma:connection}}\label{Appendix:proof_lemma:connection}
    % \textcolor{blue}{ Probably move to appendix, leave only the proof scratch here.}
    \begin{proof}
    % From Fact \ref{fact:1} it follows that  $$\forall x\in \mathcal X ~\exists s_x = \frac{x-x_0}{\|x-x_0\|}\in \R^d : ~\la s_x, \nabla f^i(x)\ra \geq \frac{\beta - \rho}{R} \geq\frac{\beta}{2 R} ~~~ \forall i\in \mathcal I_{\rho}(x).$$ 
     From Fact \ref{fact:1} it follows that  $$\forall x\in \mathcal X ~\exists s_x = \frac{x-x_0}{\|x-x_0\|}\in \R^d : ~\la s_x, \nabla f^i(x)\ra \geq\frac{\beta}{2 R} ~~~ \forall i\in \mathcal I_{\beta/2}(x).$$
    Let $\hat x$ be an approximately optimal point for the log barrier: 
    % $B_{\eta}(\hat x) - B_{\eta}(x^*) \leq  
    $B_{\eta}(\hat x) - B_{\eta}(x^*_{\eta}) \leq \eta,$ 
    that is equivalent to:
    $$
        f^0(\hat x) + \eta\sum_{i=1}^m -\log (- f^i(\hat x)) - f^0( x^*_{\eta}) - \eta\sum_{i=1}^m -\log (- f^i(x^*_{\eta})) \leq \eta.
    $$
    Then, for the objective function we have the following bound:
    \begin{align}\label{eq:4.1.2.1}  
        f^0(\hat x) - f^0( x^*_{\eta})  \leq  \eta + \eta\sum_{i=1}^m -\log\frac{- f^i(x^*_{\eta})}{ - f^i(\hat x)}.
    \end{align}
    % \leq \eta (1-m\log(\beta)).$$
    % Even if it is not on the interior, the solution $x^*_{\eta}$ stands away from the boundary at least by $f^i(x^*_{\eta})\leq c\eta^m$.
    The optimal point for the log barrier $x^*_{\eta}$ must satisfy the stationarity condition
    $$\nabla B_{\eta}(x^*_{\eta}) = \nabla f^0(x^*_{\eta}) + \eta \sum_{i=1}^m \frac{\nabla f^i(x^*_{\eta})}{-f^i(x^*_{\eta})} = 0.$$ 
    By carefully rearranging the above, we obtain
    $$\sum_{i \in \mathcal I_{\beta/2}(x^*_{\eta})} \frac{  \nabla f^i(x^*_{\eta})}{-f^i(x^*_{\eta})} + \sum_{i \notin \mathcal I_{\beta/2}(x^*_{\eta})}\frac{ \nabla f^i(x^*_{\eta})}{-f^i(x^*_{\eta})} = \frac{-\nabla f^0(x^*_{\eta})}{\eta}.$$ 
    By taking a dot product of both sides of the above equation with $s_x = \frac{x^*_{\eta}-x_0}{\|x^*_{\eta}-x_0\|}$, using the Lipschitz continuity we get for $x^*_{\eta}$:
    \begin{align}
        & \frac{1}{\min_i\{-f^i(x^*_{\eta})\}}  \sum_{i \in \mathcal I_{\beta/2}(x^*_{\eta})} \la \nabla f^i(x^*_{\eta}), s_x\ra \frac{ \min_i\{-f^i(x^*_{\eta})\}}{-f^i(x^*_{\eta})} 
        \\ 
        &
        = \frac{\la -\nabla f^0(x^*_{\eta}), s_x\ra}{\eta}  - \sum_{i\notin \mathcal I_{\beta/2}(x^*_{\eta})}\frac{  \la \nabla f^i(x^*_{\eta}), s_x\ra }{-f^i(x^*_{\eta})} \leq \frac{mL}{\eta}.
    \end{align}
    % By setting $\phi_i = \frac{-F^j}{-F^i} < 1$ we get that $ \|\sum \nabla F^i \phi_i\| = d \geq 0.$ 
    From the above, using Fact \ref{fact:1}, we get
    % $$\frac{1}{\min\{-f^i(x^*_{\eta})\}}  \leq \frac{Lm}{\eta l_x},$$
    $$\min\{-f^i(x^*_{\eta})\}  \geq \frac{\eta \beta}{2mLR}.$$
    Hence, combining the above with (\ref{eq:4.1.2.1}) we get the following relation of point $\hat x$ and point $x^*_{\eta}$ optimal for the log barrier:
        \begin{align}\label{eq:1}
        f^0(\hat x) -f^0( x^*_{\eta})  \leq  \eta +  \eta\sum_{i=1}^m \log\frac{ - f^i(\hat x)}{ - f^i(x^*_{\eta})} \leq \eta \left(1 + m\log\left(\frac{2m LR\hat \beta}{\eta \beta}\right)\right).
        \end{align}
% We show the relation between  $\eta$-approximate KKT point $\bar x_{\eta}$ and  $ x^*.$ 
% In particular, we show that $\bar x$ is in fact the $\eta$-approximate solution $ x^*$ of the problem (\ref{problem})  in terms of the objective accuracy. 
Next, note that the Lagrangian $\mathcal L(x,\lambda)$ is a convex function over $x$ and concave over $\lambda$. Hence, for $(x^*_{\eta}, \lambda^*_{\eta}) := \left(x^*_{\eta}, \left[\frac{\eta}{- f^1(x^*_{\eta})},\ldots,\frac{\eta}{- f^m(x^*_{\eta})}\right]^T\right)$ we have
    \begin{align*} 
         &
         \mathcal L(x^*_{\eta},\lambda^*_{\eta}) - \mathcal L(x^*, \lambda^*)
        \leq   \mathcal L(x^*_{\eta},\lambda^*_{\eta}) - \mathcal L(x^*, \lambda^*_{\eta})
        % \\
        % & 
        \leq \la \nabla_{x} \mathcal  L( x^*_{\eta},\lambda^*_{\eta}), \lambda^*_{\eta}  - x^*\ra
        \leq 0.
    \end{align*}
Expressing $\mathcal L(x^*_{\eta},\lambda^*_{\eta})$ and $\mathcal L( x^*,\lambda^*)$ and exploiting the fact that $\nabla B_{\eta}(x^*_{\eta}) = \nabla_{x} \mathcal  L( x^*_{\eta},\lambda^*_{\eta}) = 0$,
% \textcolor{blue}{the $\eta$-approximate KKT conditions (\ref{KKT.1})-(\ref{KKT.3})}
 we obtain
    $
        \mathcal L(x^*_{\eta},\lambda^*_{\eta}) - \mathcal L(x^*, \lambda^*) = f^0(x^*_{\eta}) - f^0(x^*) - m\eta \leq 0.
    $    
    Consequently, we have    
       $ f^0(x^*_{\eta}) - f^0(x^*) \leq m \eta.$
   Combining the above and (\ref{eq:1}), we get
        $$ f^0(\hat x) - \min_{x\in \mathcal X}f^0( x)   \leq \eta + \eta m\log\left(\frac{2mLR \hat\beta}{\eta  \beta}\right) + m\eta.$$
\end{proof}

\subsection{Zeroth-order estimator properties proof}\label{Appendix:zero-order-estimator}
The  deviation of the gradient estimators $ G^i(x_t,\nu) - \nabla f^i_{\nu}(x_t)$, by definition can be expressed as follows for $i = 0,\ldots,m$
\begin{align}\label{eq:delta_0}
    G^i(x_t,\nu) - \nabla f^i_{\nu}(x_t)
    = \frac{1}{n_t}\sum_{j=1}^{n_t}
    % \underbrace{
    \left[\underbrace{\left(d\frac{f^i(x_{k}+\nu s_{tj}) - f^i(x_t)}{\nu}s_{tj} - \nabla f^i_{\nu}(x_t)\right)}_{v_j^i} +  \underbrace{d\frac{\xi_{tj}^{i+} - \xi_{tj}^{i-} }{\nu}s_{tj}}_{u_j^i}\right],%}_{z_l},
\end{align}
where the first term under the summation $v_j^i$ is dependent only on random $s_{tj}$, however the second term is dependent on both random variables coming from the noise $\xi^{i\pm}_{tj}$ and from the direction $s_{tj}$. 

Then, using the fact that the additive noise $\xi_{tj}^{i\pm}$ is zero-mean and independent on $s_{tj}$, we get:
\begin{align}\label{eq:delta_0}
    \E\left\|G^i_{\nu,n}(x_t,\xi) - \nabla f^i_{\nu}(x_t)\right\|^2
    = \E \left\|\frac{1}{n}\sum_{j = 1}^{n} v_j^i\right\|^2 +  \E \left\|\frac{1}{n}\sum_{j = 1}^{n} u_j^i\right\|^2 
\end{align}
Using the result of Lemma 2.10 \citep{Berahas_2021}, we can bound the first part of the above expression $\E \left\|\frac{1}{n}\sum_{j = 1}^{n} v_j^i\right\|^2$:
% \begin{align}
%         \E \left(\frac{1}{n}\sum_{j = 1}^{n} v_j^i\right)\left(\frac{1}{n}\sum_{j = 1}^{n} v_j^i\right)^T
%          \leq  3 \frac{\frac{d}{d+2}\|\nabla f^i(x)\|^2 +  d M_i^2 \nu^2 }{n}.
%         %  + d\|\nabla f^i(x)\|^2 \sim \frac{d^2\sigma^2}{\eta^2},
%         ~\forall i\in \{0,\ldots,m\}.
% \end{align} 
% and
\begin{align}
        \E \left\|\frac{1}{n}\sum_{j = 1}^{n} v_j^i\right\|^2 
         \leq  \frac{3d^2}{n} \left(\frac{\|\nabla f^i(x)\|^2}{d} +  \frac{ M_i^2 \nu^2}{4}\right).
        %  + d\|\nabla f^i(x)\|^2 \sim \frac{d^2\sigma^2}{\eta^2},
        ~\forall i\in \{0,\ldots,m\}.
\end{align} 
The second part $u_j^i$ is zero-mean, hence does not influence the bias. Indeed, using the independence of $\xi^{j\pm}_{tj}$ and $s_{tj}$ we derive
\begin{align}\label{eq:E u_j^i}
    \E \sum_{j = 1}^{n_t} u_j^i = \frac{d}{\nu} \E\left(\sum_{j = 1}^{n_t} (\xi^{i+}_{tj}  -  \xi^{i-}_{tj}) s_{tj}\right) = 0.
\end{align}
 Its variance can be bounded as follows,
 using $\|s_{tj}\| = 1$:
\begin{align}\label{eq:E||u_j^i||}
    \E \left\|\frac{1}{n}\sum_{j = 1}^{n} u_j^i\right\|^2 = \E \frac{d^2}{\nu^2n^2}\left\|\sum_{j = 1}^{n} (\xi^{i+}_{tj} - \xi^{i-}_{tj} ) s_{tj}\right\|^2 \leq 4 \frac{d^2}{\nu^2 n^2}\sum_{j = 1}^{n} \E\|\xi^{i+}_{tj}\|^2\|s_{tj} \|^2 \leq 4\frac{d^2\sigma^2}{\nu^2 n}.
\end{align}
From the above, and Lemma 2.10 \citep{Berahas_2021} the statement of the Lemma follows directly.
\newpage
\addcontentsline{toc}{section}{Bibliography}
\bibliography{bibliography}{}
\newpage
\end{document}

% --- supplement: Ex_paper/ex_supplement.tex ---

\maketitle

\section{A detailed example}

Here we include some equations and theorem-like environments to show
how these are labeled in a supplement and can be referenced from the
main text.
Consider the following equation:
\begin{equation}
  \label{eq:suppa}
  a^2 + b^2 = c^2.
\end{equation}
You can also reference equations such as \cref{eq:matrices,eq:bb} 
from the main article in this supplement.

\lipsum[100-101]

\begin{theorem}
  An example theorem.
\end{theorem}

\lipsum[102]
 
\begin{lemma}
  An example lemma.
\end{lemma}

\lipsum[103-105]

Here is an example citation: \cite{KoMa14}.

\section[Proof of Thm]{Proof of \cref{thm:bigthm}}
\label{sec:proof}

\lipsum[106-112]

\section{Additional experimental results}
\Cref{tab:foo} shows additional
supporting evidence. 

\begin{table}[htbp]
{\footnotesize
  \caption{Example table}  \label{tab:foo}
\begin{center}
  \begin{tabular}{|c|c|c|} \hline
   Species & \bf Mean & \bf Std.~Dev. \\ \hline
    1 & 3.4 & 1.2 \\
    2 & 5.4 & 0.6 \\ \hline
  \end{tabular}
\end{center}
}
\end{table}

\bibliographystyle{siamplain}
\bibliography{references}